\documentclass[]{interact}

\usepackage{epstopdf}
\usepackage[caption=false]{subfig}

\usepackage[numbers,sort&compress]{natbib}
\bibpunct[, ]{[}{]}{,}{n}{,}{,}
\makeatletter
\def\NAT@def@citea{\def\@citea{\NAT@separator}}
\makeatother

\theoremstyle{plain}
\newtheorem{theorem}{Theorem}[section]
\newtheorem{lemma}[theorem]{Lemma}

\newtheorem{proposition}[theorem]{Proposition}

\theoremstyle{definition}
\newtheorem{definition}[theorem]{Definition}
\newtheorem{example}[theorem]{Example}

\theoremstyle{remark}
\newtheorem{remark}{Remark}

\begin{document}


\title{Granular fractional Caputo-Katugampola derivatives and their applications in optimality conditions for fuzzy fractional variational problems}

\author{
\name{Le Thanh Tung\textsuperscript{a}\thanks{CONTACT L.~T. Tung. Email: lttung@ctu.edu.vn}, Tran Thien Khai\textsuperscript{b}, Pham Le Bach Ngoc\textsuperscript{c,d} and Trinh Tung\textsuperscript{e,f}}
\affil{\textsuperscript{a} Faculty of Mathematics, College of Natural Sciences, Can Tho University, Can Tho, Vietnam;
\textsuperscript{b} Tra Vinh University, Vinh Long, Vietnam;
\textsuperscript{c} Kien Giang University, An Giang, Vietnam;
\textsuperscript{d} University of Szeged, Szeged, Hungary;
\textsuperscript{e} Faculty
of Mathematics and Computer Science, University of Science, Ho Chi Minh City, Vietnam;
\textsuperscript{f} Vietnam National University, Ho Chi Minh City, Vietnam}
}
\date{}
\maketitle
\begin{center}
{\it
Dedicated to Professor Phan Quoc Khanh on the occasion of his 80th birthday}
\end{center}

\begin{abstract}
In this article, we present the new definition of the fuzzy Caputo-Katugampola derivative and its related concepts for the class of fuzzy functions and apply them to establish both necessary and sufficient optimality conditions for fractional fuzzy variational problems. By adopting the  horizontal membership functional representation of fuzzy numbers, the granular Caputo-Katugampola derivatives offer higher computational efficiency than the previous approaches in certain scenarios. The optimality conditions for the fuzzy
fractional variational problems under granular fuzzy Caputo
fractional derivatives are also examined and illustrated with detailed examples. Another novel aspect of our results, even in the non-fuzzy case, is the application of special functions to represent the exact solutions of the fuzzy fractional variational problems.
\end{abstract}

\begin{keywords}
Granular fractional Caputo-Katugampola derivatives; fuzzy variational problems; necessary and sufficient
optimality conditions; special functions
\end{keywords}

\section{Introduction}
Fractional calculus	is one of the important fields in mathematics and has received much attention recently due to its diverse applications in many different areas of science, technology, and life,  see e.g. the books \cite{KST06,MDM18} for detailed applications. Since fractional calculus generalizes classic integration and differentiation to arbitrary orders, it could be employed to model complex systems such as viscoelasticity, anomalous diffusion by choosing a suitable order of integrals and derivatives.
The Riemann-Liouville fractional integrals, Riemann-Liouville  fractional derivatives and Caputo fractional derivatives are  three important and frequently used concepts in
fractional calculus. In \cite{K11,K14}, Riemann-Liouville fractional integrals and Riemann-Liouville  fractional derivatives were generalized by using a scale transformation of the form $t^\rho$ in the kernel.
The Caputo-type fractional derivatives was  extended in a similar way in \cite{AMO16,A17}. Since the additional $\rho$ parameter in the Caputo-Katugampola (CK, in brief) derivative contributes to flexible adjustment for more accurate description of complex systems in fractional calculus, it has attracted significant research interest in recent years. Optimization using fractional calculus, alternatively termed ``Fractional-order optimization", is one of the important research directions in optimization and has been studied extensively recently. The necessary and sufficient optimality conditions in  fractional-order optimization problems are essential conditions in both solving fractional-order optimization  problems analytically and building algorithms to solve fractional-order optimization  problems via numerical schemes.

Real-world optimization problems frequently feature imprecise objective and constraint functions, driven by measurement errors or unexpected variables. Addressing this uncertainty quantitatively via interval concepts or fuzzy concepts has proven highly effective for decision-making. For the interval-valued and fuzzy optimization problems in recent times, see e.g. the interval-valued smooth nonlinear programming \cite{A24,T25}, the fuzzy nonlinear programming on manifold \cite{BI26}, the interval-valued convex semi-infinite programming  \cite{T20},  interval-valued variational control problems \cite{T21a} and references therein. The variational problems, used in formulating problems in the various applied and theoretical aspects of mathematics, were also generalized to the fuzzy variational problems. In \cite{F11}, the necessary optimality conditions for fuzzy variational problems were
investigated by using the level-set differentiability.  The optimality conditions for the fuzzy variational problems of several dependent variables
were established by utilizing the granular fuzzy derivatives in \cite{TT23}. In \cite{FS14}, some necessary optimality conditions for fuzzy fractional variational problems were established via Caputo-type fuzzy fractional derivatives. Under the granular fuzzy Cuputo fractional derivatives in \cite{NZ17}, the optimality conditions for the fuzzy variational problems were examined.

We observe that the granular fuzzy Caputo derivatives in \cite{NZ17} could be generalized to the granular fuzzy Caputo-Katugampola derivatives and apply to fuzzy fractional-order optimization  problems. That motivates us to introduce a novel definition of the fuzzy Caputo-Katugampola  derivative and its related concepts and utilize them to obtain both necessary and sufficient
optimality conditions for the fuzzy fractional variational problems in this paper. The structure of the paper is as follows. We first recall basic concepts and some preliminaries about the fuzzy functions, the fractional derivatives and special functions in Section 2. Then, we propose the granular  Caputo-Katugampola derivative and its related concepts in Section 3. A comprehensive example is given to illustrate the definitions of both left and right granular  Caputo-Katugampola derivatives as well as to compare with the results in \cite{NZ17} and  \cite{HVD19}. In Section 4, both necessary and sufficient optimality conditions for the fundamental fuzzy fractional variational problems and the fuzzy variational problems  with fractional isoperimetric constraints are then examined. The examples illuminating the results of the paper by using special functions are also new and verified by using Octave and Matlab.
\section{Preliminaries}
	\label{sect2}
	In this paper, the class of all closed and bounded intervals in $\mathbb{R}$, denoted by $\mathbf{K}_C$, is
	$$\mathbf{K}_C=\{U=[u^L,u^R]\mid u^L,u^R\in\mathbb{R}, u^L\le u^R\}.$$
	A fuzzy set $\widetilde{u}$ on $\mathbb{R}$ is defined by a function $\rho_{\widetilde{u}}:\mathbb{R}\to [0,1]$, which is called a membership function.
	The $\mu$-level set of $\widetilde{X}$, indicated by $[\widetilde{u}]^{\mu}$, is defined as $[\widetilde{u}]^{\mu}:=\{x\in\mathbb{R}\mid \rho_{\widetilde{u}}(x)\ge \mu\},$ $\forall \mu\in(0,1]$. The support of $\widetilde{u}$ is the set ${\rm supp}(\widetilde{u}):=\{x\in\mathbb{R}\mid \rho_{\widetilde{u}}(x)> 0\}$. The zero-level set of $\widetilde{X}$ is defined as the closure of the support of $\widetilde{u}$, i.e., $[\widetilde{u}]^0={\rm cl}\{x\in\mathbb{R}\mid \rho_{\widetilde{u}}(x)> 0\}$.
	\begin{definition}\label{defn2.1} A fuzzy number is a fuzzy set $\widetilde{u}$ with membership function $\rho_{\widetilde{u}}$ satisfying the following conditions:
		\begin{enumerate}
			\item[(i)] $\rho_{\widetilde{u}}$ is normal, that is, there exists $\bar{x}\in\mathbb{R}$ such that $\rho_{\widetilde{u}}(\bar{x})=1$;
			\item[(ii)] $\rho_{\widetilde{u}}$ is quasiconcave, i.e., $$\rho_{\widetilde{u}}(\lambda x+(1-\lambda)x')\ge \max\{ \rho_{\widetilde{u}}(x),\rho_{\widetilde{u}}(x')\}$$ for all $\lambda\in [0,1]$, for all $x,x'\in \widetilde{u}$;
			\item[(iii)] $\rho_{\widetilde{u}}$ is upper semicontinuous, i.e., $\{x\in\mathbb{R}\mid \rho_{\widetilde{u}}(x)\ge \mu\}$ is a closed subset of $\mathbb{R}$ for each $\mu\in (0,1]$;
			\item[(iv)] $[\widetilde{u}]^0$ is a compact subset of $\mathbb{R}$.
		\end{enumerate}	
	\end{definition}
The set of all fuzzy numbers on $\mathbb{R}$ is signified by $\mathbf{F}(\mathbb{R})$. The condition (ii) leads that $[\widetilde{u}]^{\mu}$ is a convex set for each $\mu\in [0,1]$. Combining this with conditions (iii) and (iv) tells us that $[\widetilde{u}]^{\mu}$ is a compact and convex subset of $\mathbb{R}$ for each $\mu\in [0,1]$. In other words, $[\widetilde{u}]^{\mu}=[u_{\mu}^L,u_{\mu}^R]\in \mathbf{K}_C$, which is called 	the parametric form of a fuzzy number $\widetilde{u}$. For instance, the triangular fuzzy number $\widetilde{a}=(a^L,a,a^R)$ with $a^L\le a\le a^R$ has the parametric form $[\widetilde{a}]^{\mu}=[a_{\mu}^L,a_{\mu}^R]=[a^L+(a-a^L)\mu,
a^R-(a^R-a)\mu]$.

The pair $[\widetilde{u}]^{\mu}=[u^L_{\mu},u^R_{\mu}]$ of functions $u^L_{\mu}=\phi_1(\mu)$ and $u^R_{\mu}=\phi_2(\mu)$ ($\mu\in[0, 1]$) satisfies the following conditions: $\phi_1(\mu)$ is a monotonically increasing left continuous function, $\phi_2(\mu)$ is a monotonically decreasing left continuous function and $u^L_{\mu}\le u^R_{\mu}$ ($\mu\in[0, 1]$).
A fuzzy number $\widetilde{u}$ is said to be a canonical number in the case when the functions $\phi_1(\mu)=u^L_{\mu}$ and $\phi_2(\mu)=u^R_{\mu}$ are continuous on $[0,1]$. The set of all canonical fuzzy numbers on $\mathbb{R}$ is denoted by $\mathbf{F}_C(\mathbb{R})$.

	\begin{definition} \cite{MPK17,PL15} Let $\widetilde{u}:[a,b]\to [0,1]$ be in $\mathbf{F}_C(\mathbb{R})$. Then, the horizontal membership function of $X$ is defined as follows
		\begin{eqnarray*}
			u^{gr}:[0,1]\times [0,1]&\to& [a,b]\\
			(\mu,\beta_u)&\mapsto&t:=u^{gr}(\mu,\beta_u)=u_\mu^L
			+(u_\mu^R-u_\mu^L)\beta_u.
		\end{eqnarray*}
		The horizontal membership function of $\widetilde{u}\in \mathbf{F}_C(\mathbb{R})$ is also denoted by $\mathcal{H}(\widetilde{u}):=u^{gr}(\mu,\beta_u)$.
	\end{definition}
	\begin{proposition} \cite{MPK17,PL15} The $\mu$-level sets of $\widetilde{u}$ can be obtained by the formula
		$$[u^L_{\mu},u^R_{\mu}]=[\mathcal{H}^{-1}(u^{gr}(\mu,\beta_u))]^{\mu}=
		\left[\inf\limits_{\gamma\ge \mu}\min\limits_{\beta_u}
		u^{gr}(\gamma,\beta_u),\sup\limits_{\gamma\ge \mu}\max\limits_{\beta_u}
		u^{gr}(\gamma,\beta_u)\right].$$
	\end{proposition}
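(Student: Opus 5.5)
We verify the two endpoint identities separately, using only the definition $u^{gr}(\gamma,\beta_u)=u^L_\gamma+(u^R_\gamma-u^L_\gamma)\beta_u$ with $\beta_u\in[0,1]$ and the monotonicity properties of the parametric form: $\gamma\mapsto u^L_\gamma$ is nondecreasing, $\gamma\mapsto u^R_\gamma$ is nonincreasing, and $u^L_\gamma\le u^R_\gamma$.

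\textbf{Step 1: inner optimization over $\beta_u$.} Fix $\gamma\in[0,1]$. The map $\beta_u\mapsto u^{gr}(\gamma,\beta_u)$ is affine with slope $u^R_\gamma-u^L_\gamma\ge 0$, so it is nondecreasing on $[0,1]$. Hence
$$\min_{\beta_u\in[0,1]}u^{gr}(\gamma,\beta_u)=u^{gr}(\gamma,0)=u^L_\gamma,\qquad \max_{\beta_u\in[0,1]}u^{gr}(\gamma,\beta_u)=u^{gr}(\gamma,1)=u^R_\gamma.$$
If $u^L_\gamma=u^R_\gamma$, the map is constant and these identities still hold. This step also shows that the image of $u^{gr}(\gamma,\cdot)$ is exactly $[u^L_\gamma,u^R_\gamma]$, which explains the notation $\mathcal{H}^{-1}$.

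\textbf{Step 2: outer optimization over $\gamma\ge\mu$.} After Step 1 it remains to show $\inf_{\gamma\ge\mu}u^L_\gamma=u^L_\mu$ and $\sup_{\gamma\ge\mu}u^R_\gamma=u^R_\mu$. Since $u^L$ is nondecreasing in $\gamma$, we have $u^L_\gamma\ge u^L_\mu$ for every $\gamma\ge\mu$. The value $\gamma=\mu$ is admissible, so the infimum is attained there and equals $u^L_\mu$. The argument for $u^R$ is symmetric, using that $u^R$ is nonincreasing. Combining the two identities gives the interval $[u^L_\mu,u^R_\mu]=[\widetilde u]^\mu$, as claimed.

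\textbf{Main obstacle.} The only delicate point is making sure the monotonicity of the endpoint functions is actually available. It follows from the nestedness of level sets: $\gamma\ge\mu$ implies $[\widetilde u]^\gamma\subseteq[\widetilde u]^\mu$. This nestedness is immediate from the definition $[\widetilde u]^\mu=\{x:\rho_{\widetilde u}(x)\ge\mu\}$, so I would state it explicitly before Step 2. Continuity of the endpoints (canonicity) is not needed for the identities themselves. It only ensures that $u^{gr}$ is well defined as a continuous horizontal membership function, so the min and max over $\beta_u$ exist trivially in any case.
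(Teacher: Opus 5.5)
Your argument is correct. The paper gives no proof of its own to compare against: the proposition is quoted from the cited references without proof. Step 1 settles the inner optimization: for fixed $\gamma$, $\beta_u\mapsto u^{gr}(\gamma,\beta_u)$ is affine with slope $u^R_\gamma-u^L_\gamma\ge 0$. Step 2 settles the outer one: monotonicity of the endpoint functions makes $\gamma=\mu$ optimal. Together they give the identity.

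Two small points of hygiene.
\begin{itemize}
\item At $\mu=0$ the level set is defined as $\mathrm{cl}\{x\mid \rho_{\widetilde u}(x)>0\}$, not as $\{x\mid\rho_{\widetilde u}(x)\ge 0\}=\mathbb{R}$. So nestedness at $\mu=0$ cannot literally be read off the formula you quote. It still holds, since $\{\rho_{\widetilde u}\ge\gamma\}\subseteq\{\rho_{\widetilde u}>0\}\subseteq[\widetilde u]^0$ for $\gamma>0$. Alternatively, you can cite the monotonicity of $u^L_\mu$ and $u^R_\mu$, which the paper lists among the properties of the parametric form.
\item The middle term $[\mathcal{H}^{-1}(u^{gr})]^\mu$ presupposes that $\mathcal{H}$ is injective. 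Your Step 1 supplies this: $u^{gr}(\gamma,0)=u^L_\gamma$ and $u^{gr}(\gamma,1)=u^R_\gamma$ recover every level set, and the level sets determine $\widetilde u$.
\end{itemize}

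Your proof also shows that, for the horizontal membership function of an actual fuzzy number, the outer $\inf_{\gamma\ge\mu}$ and $\sup_{\gamma\ge\mu}$ are redundant. They matter only when the formula is applied to a granular expression not known in advance to be of this form. This happens in the paper's example in Section 3: there $\frac{d}{d\gamma}h^R(\gamma)\ge 0$ for small $t$, so the supremum is $h^R(1)$ rather than $h^R(\mu)$. In that usage the formula acts as the definition of $\mathcal{H}^{-1}$ that enforces nested level sets, and your Step 2 would fail. The statement as posed, however, concerns only $u^{gr}=\mathcal{H}(\widetilde u)$, so your proof is complete.
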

	\begin{definition}  Let $\widetilde{u},\widetilde{v}:[a,b]\to [0,1]$ be in $\mathbf{F}_C(\mathbb{R})$.
		\begin{enumerate}
			\item[(i)] \cite{MPK17,SDL19} $\widetilde{u}=\widetilde{v}$ if $\mathcal{H}(\widetilde{u})=\mathcal{H}(\widetilde{v})$ ($u^{gr}(\mu,\beta_u)=v^{gr}(\mu,\beta_v)$)  for all $\beta_u=\beta_v\in [0,1]$ and $\mu\in [0,1]$.
			\item[(ii)] \cite{MPK17,SDL19} $\widetilde{u}\ge\widetilde{v}$ if $\mathcal{H}(\widetilde{u})\ge\mathcal{H}(\widetilde{v})$ ($u^{gr}(\mu,\beta_u)\ge v^{gr}(\mu,\beta_v)$)  for all $\beta_u=\beta_v\in [0,1]$ and $\mu\in [0,1]$.
			\item[(iii)] \cite{TT23} $\widetilde{u}>\widetilde{v}$ if $\mathcal{H}(\widetilde{u})>\mathcal{H}(\widetilde{v})$ ($u^{gr}(\mu,\beta_u)> v^{gr}(\mu,\beta_v)$)  for all $\beta_u=\beta_v\in [0,1]$ and $\mu\in [0,1]$.
		\end{enumerate}
	\end{definition}
	\begin{definition} (see \cite{GH96,KST06}) Let $ c\in \mathbb{R}, 1\le p\le \infty$ and $\phi:[a,b]\to \mathbb{R}$ be a real-valued function.
		\begin{enumerate}
\item[(i)]  $X^p_c[a,b]$ is the space of Lebesgue measurable functions $\phi$ on $[a,b]$ for which $\|\phi\|_{X^p_c}<\infty$, where
$$\|\phi\|_{X^p_c}=\left\{\begin{array}{ll}
\left(\int_a^b|t^c\phi(t)\frac{dt}{t}|\right)^{1/p}& \mbox{if}\;
1\le p<\infty\\
\mbox{ess}\sup\limits_{a\le t\le b}\{t^c|\phi(t)|\}& \mbox{if}\;
p=\infty.
\end{array}
\right.$$
When $c = 1/p$, the space $X^p_{1/p}$ coincides with the $L^p[a,b]$.
			\item[(ii)] $C[a,b]$ is the space of all continuous functions on $[a,b]$, i.e.,
			$$C[a,b]=\{\phi\mid \phi \;\mbox{is continuous on}\; [a,b]\}.$$			
\item[(iii)] $AC[a,b]$ is the space of all absolutely continuous functions on $[a,b]$, i.e.,
			$$AC[a,b]=\left\{\phi\mid  \phi(t)=C_0+\int_a^b\phi_1(t)dt, \phi_1\in L[a,b]\right\}.$$
\item[(iv)] $C^1[a,b]$ is the space of all continuously differentiable functions on $[a,b]$, i.e.,
			$$C^1[a,b]=\{\phi\mid \exists \phi':[a,b] \to \mathbb{R}\;\mbox{and}\; \phi' \;\mbox{is continuous on}\; [a,b]\}.$$
		\end{enumerate}
	\end{definition}
By the definition, we get that
$$C^1[a,b]\subset AC[a,b]\subset C[a,b]\subset X^p_c[a,b],$$
see also \cite{GH96,KST06} for more details. Hence, the assumptions that $\psi\in X^p_c[a,b]$ is also satisfied for $\psi\in AC[a,b]$, $\psi\in C[a,b]$ or $\psi\in C^1[a,b]$, since these weaker
statements are strong enough for the assumptions.

	\begin{definition}\cite{MPK17} Let $\widetilde{\phi} :[a, b] \to \mathbf{F}_C(\mathbb{R})$ be a fuzzy function including $n$ distinct fuzzy numbers $\widetilde{u}_1,\widetilde{u}_2,...,$ $\widetilde{u}_n$. The horizontal membership function of $\widetilde{\phi}(t)$ at the point $t\in [a,b]$ is denoted by $\mathcal{H}(\widetilde{\phi}(t)):=\phi^{gr}(t,\mu,\beta_{\psi})$,
		where $\phi^{gr}:[a, b]\times [0,1]\times [0,1]^n\to [c,d]\subset\mathbb{R}$ and $\beta_{\phi}=(\beta_{u_1},...,\beta_{u_n})$.
	\end{definition}
	\begin{definition} \cite{MPK17} Let $\widetilde{\phi} :[a,b] \to\mathbf{F}_C(\mathbb{R})$ be a fuzzy-valued function.
		
		\noindent (i) The granular metric on $\mathbf{F}_C(\mathbb{R})$ is a mapping $D^{gr}:\mathbf{F}_C(\mathbb{R})\times \mathbf{F}_C(\mathbb{R})\to \mathbb{R}_+$, is given by
		$$D^{gr}(\widetilde{u},\widetilde{v}):=\sup\limits_{\mu}
		\max\limits_{\beta_u,\beta_v}\mid u^{gr}(\mu ,\beta_u)
		-v^{gr}(\mu ,\beta_v)\mid .$$
		
		\noindent (ii) The fuzzy function $\widetilde{\phi} :[a, b] \to \mathbf{F}_C(\mathbb{R})$ is said to be continuous on $[a,b]$ if $\lim\limits_{t\to \bar{t}}\widetilde{\phi}(t)=\widetilde{\phi}(\bar{t}) \forall \bar{t}\in (a,b)$, $\lim\limits_{t\to a^+}\widetilde{\phi}(t)=\widetilde{\phi}(a)$, $\lim\limits_{t\to b^-}\widetilde{\phi}(t)=\widetilde{\phi}(b)$.
	\end{definition}
	
	\begin{definition}
		\cite{MPK17} The fuzzy function $\widetilde{\phi} :(a,b) \to\mathbf{F}_C(\mathbb{R})$ is said to be granular differentiable (gr-differentiable) at
		$\bar{t}\in (a,b)$, if there exists a fuzzy number $\widetilde{\dot{\phi}}(\bar{t})\in \mathbf{F}_C(\mathbb{R})$, such
		that the following limit exists:
		$$\lim\limits_{h\to 0}\frac{\widetilde{\phi}(\bar{t}+h)\ominus_{gr}
\widetilde{\phi}(\bar{t})}{h}=
		\widetilde{\dot{\phi}}(\bar{t})$$
		and the value $\widetilde{\dot{\phi}}(\bar{t})$ is then called gr-derivative of fuzzy-valued function $\widetilde{\phi}$ at $\bar{t}$. We say that $\widetilde{\phi}$ is gr-differentiable on $(a,b)$ if the gr-derivative
		$\widetilde{\dot{\phi}}(t)$ exists for all points $t\in (a,b)$ and $\frac{d_{gr}\phi}{dt}:=\widetilde{\dot{\phi}}:(a,b)\to \mathbf{F}_C(\mathbb{R})$
		is called gr-derivative of $\widetilde{\phi}$ on $(a,b)$.
	\end{definition}

	\begin{proposition}
		\cite{MPK17}
		The fuzzy function $\widetilde{\phi} :(a, b)\to \mathbf{F}_C(\mathbb{R})$  is granular differentiable at
		$\bar{t}\in (a,b)$ if and only if $\mathcal{H}(\widetilde{\phi}(.))=\phi^{gr}(.,\mu,\beta_{\phi})$ is  differentiable at $\bar{t}$ and $$\mathcal{H}
		\left(\widetilde{\dot{\phi}}(\bar{t})\right)=\frac{\partial\mathcal{H}
			(\widetilde{\phi}(\bar{t}))}{\partial t}
		=\frac{\partial \phi^{gr}(\bar{t},\mu,\beta_{\phi})}{\partial t}.$$
	\end{proposition}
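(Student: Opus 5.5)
The plan is to reduce granular differentiability to ordinary differentiability of the horizontal membership function, using the granular difference and the granular metric. I take the definition of $\ominus_{gr}$ from \cite{MPK17}: $\widetilde{u}\ominus_{gr}\widetilde{v}=\widetilde{w}$ exactly when $\mathcal{H}(\widetilde{w})=\mathcal{H}(\widetilde{u})-\mathcal{H}(\widetilde{v})$. Division by the real scalar $h$ acts the same way, pointwise on horizontal membership functions. So for fixed $\mu$ and $\beta_\phi$,
$$\mathcal{H}\left(\frac{\widetilde{\phi}(\bar t+h)\ominus_{gr}\widetilde{\phi}(\bar t)}{h}\right)=\frac{\phi^{gr}(\bar t+h,\mu,\beta_\phi)-\phi^{gr}(\bar t,\mu,\beta_\phi)}{h}.$$
Taking this identity as the first step, the rest is a comparison of limits in $D^{gr}$.

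For the forward direction, assume the limit in the gr-differentiability definition exists and equals $\widetilde{\dot\phi}(\bar t)$. Convergence in the granular metric $D^{gr}$ means the difference quotient above converges to $\mathcal{H}(\widetilde{\dot\phi}(\bar t))$ uniformly in $\mu$ and $\beta_\phi$, because $D^{gr}$ is a sup over $\mu$ of a max over the $\beta$'s. In particular it converges pointwise for each fixed $(\mu,\beta_\phi)$. Hence $\phi^{gr}(\cdot,\mu,\beta_\phi)$ is differentiable at $\bar t$ with $\partial_t\phi^{gr}=\mathcal{H}(\widetilde{\dot\phi}(\bar t))$.

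For the converse, assume $\phi^{gr}$ is differentiable in $t$ at $\bar t$. Define $g(\mu,\beta_\phi):=\partial_t\phi^{gr}(\bar t,\mu,\beta_\phi)$. I then have two things to check:
\begin{enumerate}
\item[(a)] $g$ is the horizontal membership function of some $\widetilde{\dot\phi}(\bar t)\in\mathbf{F}_C(\mathbb{R})$;
\item[(b)] the difference quotients converge to it in $D^{gr}$.
\end{enumerate}
For (a), I use the Proposition recovering level sets via $\inf_{\gamma\ge\mu}\min_\beta$ and $\sup_{\gamma\ge\mu}\max_\beta$. This produces nested intervals; continuity of their endpoints comes from the continuity and compactness of the parameter domain $[0,1]\times[0,1]^n$.

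For (b), I interpret "differentiable" as in \cite{MPK17}: differentiability of the function $t\mapsto\phi^{gr}(t,\cdot,\cdot)$, so that the difference quotient converges uniformly in $(\mu,\beta_\phi)$. Under that reading, the convergence in $D^{gr}$ is immediate.

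The main obstacle is precisely this point. Pointwise differentiability in $t$ for each $(\mu,\beta)$ alone gives only pointwise convergence, which is not convergence in $D^{gr}$. I would resolve it in one of two ways. The first is to state explicitly that differentiability of $\mathcal{H}(\widetilde\phi(\cdot))$ is meant uniformly over $(\mu,\beta_\phi)$, as in \cite{MPK17}. The second, if a pointwise hypothesis is insisted on, is to add continuity of $\partial_t\phi^{gr}$ jointly in all variables. The mean value theorem combined with compactness of $[0,1]^{n+1}$ then upgrades pointwise convergence to uniform convergence.
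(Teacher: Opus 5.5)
The paper gives no proof of this proposition; it imports it from \cite{MPK17}, where the underlying argument is the same unwinding you propose. The gr-difference quotient is represented by $(\phi^{gr}(\bar t+h,\mu,\beta_\phi)-\phi^{gr}(\bar t,\mu,\beta_\phi))/h$, and $D^{gr}\to 0$ is read as sup-convergence in $(\mu,\beta_\phi)$. So your route is the standard one.

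The obstacle you isolate is a defect of the statement, not of your write-up: under the pointwise reading of ``differentiable'' the converse is false. Take $s=1-\mu$, $g(x)=\max\{0,1-|x-2|\}$ and $[\widetilde\phi(t)]^\mu=[0,\,s(2+g(t/s))]$ on $(-1,1)$. The right endpoint is continuous and nonincreasing in $\mu$, so $\widetilde\phi(t)\in\mathbf{F}_C(\mathbb{R})$. Every slice $\phi^{gr}(\cdot,\mu,\beta)$ is constant for $t<s$, hence $\partial_t\phi^{gr}(0,\mu,\beta)=0$ for all $(\mu,\beta)$. Yet at $\mu=1-h/2$, $\beta=1$ the difference quotient equals $1/2$ for every $h\in(0,1)$. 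By your forward direction the only candidate derivative is $\widetilde 0$, so $\widetilde\phi$ is not gr-differentiable at $0$. Your second repair (joint continuity of $\partial_t\phi^{gr}$ near $\bar t$ plus the mean value theorem) is precisely the $\mathcal{C}^1$ setting of Remark~\ref{rem2.1}, which is where the paper actually uses the result, so nothing downstream is lost.

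Two further conventions should be stated rather than assumed.

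First, your claim that $D^{gr}$-convergence means uniform convergence in $(\mu,\beta_\phi)$ requires $D^{gr}$ to compare the granules at a \emph{common} value $\beta_u=\beta_v$. As printed, $D^{gr}$ maximizes over independent $\beta_u,\beta_v$, so $D^{gr}(\widetilde u,\widetilde v)\ge\frac12\sup_\mu(u^R_\mu-u^L_\mu)$ for every $\widetilde v$. Then $\widetilde\phi(t)=t\,\widetilde u$ with $\widetilde u$ non-crisp has the smooth granule $\phi^{gr}=t\,u^{gr}$, but its difference quotient is constantly $\widetilde u$ and has no $D^{gr}$-limit. So your step (b), and the statement's converse, would fail under the printed definition.

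Second, in (a) the continuity you need is that of $g$ itself, not just of the parameter domain. Under your uniform reading it holds because $g$ is a uniform limit of quotients that are continuous in $(\mu,\beta_\phi)$. After that, $\gamma\mapsto\min_\beta g(\gamma,\beta)$ is continuous by compactness, and the running $\inf_{\gamma\ge\mu}$ and $\sup_{\gamma\ge\mu}$ preserve continuity.

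Note also that (a) really yields only that $\mathcal{H}^{-1}(g)$ is a canonical fuzzy number. Its own horizontal membership function is generally not $g$: in the worked example of Section 3 an endpoint becomes $h^R(1)$ instead of $h^R(\mu)$. So $\mathcal{H}(\widetilde{\dot\phi}(\bar t))=\partial_t\phi^{gr}$ holds only in the granular convention that identifies a fuzzy number with any granule representing it. Your opening identity already relies on that same convention when $\beta_\phi\in[0,1]^n$.
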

	\begin{proposition}
		The fuzzy function $\widetilde{\phi} :[a, b]\to \mathbf{F}_C(\mathbb{R})$  is granular differentiable at
		$\bar{t}=a (\bar{t}=b)$ if and only if its
		horizontal membership function $\mathcal{H}(\widetilde{\phi}(t))=\phi^{gr}(t,\mu,\beta_{\phi})$ is  differentiable with respect
		to $t$ at that point. Moreover,
		$$\mathcal{H}
		\left(\widetilde{\dot{\phi}}(a)\right)
		=\frac{\partial \mathcal{H}(\widetilde{\phi}(a))}{\partial t}
		\left(\mathcal{H}\left(\widetilde{\dot{\phi}}(b)\right)
		=\frac{\partial \mathcal{H}(\widetilde{\phi}(b))}{\partial t}\right).$$
	\end{proposition}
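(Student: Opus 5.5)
The plan is to reduce the endpoint statement to the interior case treated in the preceding proposition from \cite{MPK17}. The only change is that at $\bar t=a$ (resp.\ $\bar t=b$) the limit in the definition of gr-differentiability is one-sided, $h\to 0^+$ (resp.\ $h\to 0^-$). We show that the same chain of equivalences used for interior points goes through verbatim once the two-sided limit is replaced by the appropriate one-sided one. We treat $\bar t=a$; the case $\bar t=b$ is symmetric.

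First we express the granular difference through horizontal membership functions. By the construction of $\ominus_{gr}$ in \cite{MPK17}, for every $h$ with $a+h\in[a,b]$,
\[
\mathcal{H}\bigl(\widetilde{\phi}(a+h)\ominus_{gr}\widetilde{\phi}(a)\bigr)=\phi^{gr}(a+h,\mu,\beta_\phi)-\phi^{gr}(a,\mu,\beta_\phi),
\]
and division by the real scalar $h$ commutes with $\mathcal{H}$. Hence the fuzzy difference quotient has horizontal membership function
\[
q_h(\mu,\beta_\phi)=\frac{\phi^{gr}(a+h,\mu,\beta_\phi)-\phi^{gr}(a,\mu,\beta_\phi)}{h}.
\]

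Next we translate convergence. By the definition of $D^{gr}$, a net of fuzzy numbers converges to $\widetilde{\dot\phi}(a)$ if and only if its horizontal membership functions converge to $\mathcal{H}(\widetilde{\dot\phi}(a))$, uniformly in $(\mu,\beta_\phi)$. So if the one-sided limit exists, $q_h\to\mathcal{H}(\widetilde{\dot\phi}(a))$ as $h\to0^+$. This means $\phi^{gr}$ is (right-)differentiable in $t$ at $a$, with derivative $\mathcal{H}(\widetilde{\dot\phi}(a))$. Conversely, suppose $\partial_t\phi^{gr}(a,\mu,\beta_\phi)$ exists. Define the fuzzy number $\widetilde{\dot\phi}(a):=\mathcal{H}^{-1}\bigl(\partial_t\phi^{gr}(a,\cdot,\cdot)\bigr)$, using the preceding proposition on recovering level sets from $\mathcal{H}$. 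The relation $D^{gr}\bigl(q_h,\widetilde{\dot\phi}(a)\bigr)\to0$ then gives gr-differentiability at $a$. The displayed formula follows from the identification of the two limits together with equality of fuzzy numbers via $\mathcal{H}$, part (i) of the definition.

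The main obstacle is the gap between pointwise and uniform convergence in $(\mu,\beta_\phi)$. Differentiability of $\phi^{gr}$ in $t$ at $a$, understood pointwise in the parameters, does not by itself give $D^{gr}$-convergence, which carries a $\sup_\mu\max_\beta$. We will follow the convention of \cite{MPK17}, where differentiability of $\phi^{gr}$ is understood with respect to $t$ uniformly over the compact parameter set $[0,1]\times[0,1]^n$. Failing that, we assume $\partial_t\phi^{gr}$ is continuous on $[a,b]\times[0,1]\times[0,1]^n$ near $a$. Then the mean value theorem gives
\[
|q_h-\partial_t\phi^{gr}(a,\cdot)|\le\sup_{s\in[a,a+h]}|\partial_t\phi^{gr}(s,\cdot)-\partial_t\phi^{gr}(a,\cdot)|,
\]
and this tends to $0$ uniformly in the parameters by uniform continuity on a compact set. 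We also need to check that $\partial_t\phi^{gr}(a,\cdot,\cdot)$ is indeed the horizontal membership function of a canonical fuzzy number, so that $\mathcal{H}^{-1}$ applies. This follows because it is a uniform limit of horizontal membership functions $q_h$ of canonical fuzzy numbers, and that class is closed under such limits.
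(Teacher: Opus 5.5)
The paper gives no proof of this endpoint statement. It is placed as the one-sided counterpart of the interior result of \cite{MPK17}, and your reduction is exactly that argument: replace $h\to0$ by $h\to0^{\pm}$, pass the quotient through $\mathcal{H}$, and read $D^{gr}$-convergence as uniform convergence in common parameters $(\mu,\beta_\phi)$. That reading of $D^{gr}$ is the one that makes it a metric; with independent maxima over $\beta_u,\beta_v$ as displayed, $D^{gr}(\widetilde u,\widetilde u)$ would be the diameter of $[\widetilde u]^0$.

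Your insistence on uniformity is well founded, not merely a convention. Take $a=0$, $s=1-\mu$ and $\phi^{gr}(t,\mu,\beta_\phi)=(2\beta_\phi-1)f(t,\mu)$ with $f(t,\mu)=\frac12\bigl(s+\frac{t^2s}{t^2+s^2}\bigr)$ (the fraction taken as $0$ at $t=s=0$). Each $\widetilde\phi(t)$ is the canonical fuzzy number with level sets $[-f(t,\mu),f(t,\mu)]$, since $f\ge0$ is continuous and nonincreasing in $\mu$. For every fixed $(\mu,\beta_\phi)$ we have $\partial_t\phi^{gr}(0,\mu,\beta_\phi)=0$. Yet $\sup_\mu q_h(\mu,1)=\sup_{0\le s\le1}\frac{hs}{2(h^2+s^2)}=\frac14$ for $0<h\le1$. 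So the ``if'' direction holds only under your uniform reading (or your continuity fallback), not for pointwise differentiability in $t$.

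The step that does not work as written is the last one. The quotients $q_h$, and their limit $g:=\partial_t\phi^{gr}(a,\cdot,\cdot)$, are in general not horizontal membership functions of canonical fuzzy numbers in the paper's sense, namely the form $u^L_\mu+(u^R_\mu-u^L_\mu)\beta$ with $u^L_\mu$ nondecreasing and $u^R_\mu$ nonincreasing. So your closure argument starts from a false premise. The paper's own example shows this at an endpoint. For $\widetilde\phi(t)=(\frac{t^3}{3},\frac{t^3}{3}+t+3,\frac{2t^3}{3}+4)$ on $[0,2]$, one gets $q_h(\mu,1)=\mu+\frac{(2-\mu)h^2}{3}$, which increases in $\mu$ for $h<\sqrt3$. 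The limit is $g(\mu,\alpha_\phi)=\mu$ at $a=0$, and this is the horizontal membership function of no canonical fuzzy number.

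What you actually need is weaker and true. First, $g$ is continuous on the compact parameter set, being a uniform limit of the continuous $q_h$. Second, the level-set formula $\bigl[\inf_{\gamma\ge\mu}\min_{\beta_\phi}g,\ \sup_{\gamma\ge\mu}\max_{\beta_\phi}g\bigr]$ then gives nested intervals with continuous monotone endpoints, i.e.\ a canonical fuzzy number ($[\mu,1]$ in the example). Third, $\widetilde{\dot\phi}(a)$ is the object represented by $g$ under the granular convention of equality via $\mathcal{H}$. The displayed identity and your final $D^{gr}$-convergence must be read for this representative, as in the interior case and in the paper's computations. If you used instead the standard horizontal membership function of the level-set fuzzy number ($\mu+(1-\mu)\alpha_\phi$ in the example), the quotients would not converge to it.
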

	
	\begin{definition}Let $\widetilde{\phi}:[a,b]\to \mathbf{F}_C(\mathbb{R})$ be a fuzzy-valued function.
		\begin{enumerate}
			\item[(i)] $\mathcal{C}([a,b])$ is the space of all continuous fuzzy functions  on $[a,b]$, i.e.,
			$$\mathcal{C}([a,b])=\{\widetilde{\phi}\mid \widetilde{\phi} \;\mbox{is continuous on}\; [a,b]\}.$$
			\item[(ii)] $\mathcal{C}^1([a,b])$ is the space of all continuously gr-differentiable fuzzy functions \cite{DLK20,SDL19} on $[a,b]$, i.e.,
			$$\mathcal{C}^1([a,b])=\{\widetilde{\phi}\mid \exists \widetilde{\dot{\phi}}:[a,b] \to \mathbf{F}_C(\mathbb{R})\;\mbox{and}\; \widetilde{\dot{\psi}} \;\mbox{is continuous on}\; [a,b]\}.$$
		\end{enumerate}
	\end{definition}
\begin{remark}\label{rem2.1}
		The fuzzy function $\widetilde{\phi} :[a, b] \to \mathbf{F}_C(\mathbb{R})$  is continuously $gr$-differentiable on
		$[a,b]$ if and only if its
		horizontal membership function $\mathcal{H}(\widetilde{\phi}(t))=\phi^{gr}(t,\mu,\beta_{\psi})$ is  continuously
differentiable on $[a,b]$.
	\end{remark}
\begin{definition} \cite{SB12} Let $\tilde{u}, \tilde{v}\in \mathbf{F}_C(\mathbb{R})$,
\begin{enumerate}
	\item[(i)]  The generalized Hukuhara difference ($gH$-difference, in brief) of   $\tilde{u}, \tilde{v}$  is defined as follows:
$$\tilde{u} \ominus_{gH}\tilde{v}=\tilde{w}
\Leftrightarrow \left\{\begin{array}{c}
(i)\; \tilde{u}=\tilde{v}+\tilde{w}\\
\mbox{or}\; (ii)\;\tilde{v}=\tilde{u}+(-1)\tilde{w}.
\end{array}
\right. $$
The $\tilde{u} \ominus_{gH}\tilde{v}$  is
given levelwise by:
$$[\tilde{u} \ominus_{gH}\tilde{v}]^\mu=\left[\min\{u_{\mu}^R-v_{\mu}^R,
u_{\mu}^L-v_{\mu}^L\},\max \{u_{\mu}^R-v_{\mu}^R,
u_{\mu}^L-v_{\mu}^L\}\right].$$
\item[(ii)] The fuzzy function $\widetilde{\phi} :(a,b) \to\mathbf{F}_C(\mathbb{R})$ is said to be $gH$-differentiable at		$\bar{t}\in (a,b)$, if there exists a fuzzy number $\widetilde{\dot{\phi}}(\bar{t})\in \mathbf{F}_C(\mathbb{R})$, such
		that the following limit exists:
		$$\lim\limits_{h\to 0}\frac{\widetilde{\phi}(\bar{t}+h)\ominus_{gH}
\widetilde{\phi}(\bar{t})}{h}=
		\widetilde{\dot{\phi}}^{gH}(\bar{t}).$$
\end{enumerate}
\end{definition}
\begin{definition} \cite{MPK17}
		Let $\widetilde{\phi} :[a, b] \to \mathbf{F}_C(\mathbb{R})$ be a fuzzy function and $\mathcal{H}(\widetilde{\phi}(t))=\phi^{gr}(t,\mu,\beta_{\phi})$
		be integrable on $[a,b]$ with the integral $\widetilde{\int}_a^b\widetilde{\phi}(t)dt$. Then, the fuzzy
		function $\widetilde{\phi}$ is said to be gr-integrable on $[a,b]$ if there
		exists a fuzzy number $\widetilde{p}=\widetilde{\int}_a^b\widetilde{\phi}(t)dt$ such that $$\mathcal{H}\left(\widetilde{\int}_a^b\widetilde{\phi}(t)dt\right)
		=\mathcal{H}(\widetilde{p})=
		\int_a^b\mathcal{H}(\widetilde{\phi}(t))dt
		=\int_a^b\phi^{gr}(t,\mu,\beta_{\phi})dt.$$
	\end{definition}

\begin{definition}\label{defn2.14}
		Let $\phi:[a,b]\to \mathbb{R}$ be of class $X^p_c[a,b]$, $0<\beta<1$ and $\rho>0$.
		\begin{enumerate}
			\item[(i)] \cite{K11,K14} The left (right) Katugampola-Riemann-Liouville (KRL, in brief) integral of $\phi$ of order $\beta$ is defined by
			$$I^{\beta,\rho}_{a^+}\phi(t):=\frac{\rho^{1-\beta}}{\Gamma(\beta)}
\int_a^t\frac{\tau^{\rho-1}}{(t^{\rho}-\tau^{\rho})^{1-\beta}}
\phi(\tau)d\tau, t\ge a$$
			$$\left(I^{\beta,\rho}_{b^-}\phi(t):=\frac{\rho^{1-\beta}}{\Gamma(\beta)}		\int_t^b\frac{\tau^{\rho-1}}{(\tau^{\rho}-t^{\rho})^{1-\beta}}
\phi(\tau)d\tau, t\le b\right).$$
			\item[(ii)] \cite{K11,K14} Let $\phi\in C^1[a,b]$. The left (right) KRL fractional derivative of $\phi$ of order $\beta$ is
			$$D^{\beta,\rho}_{a^+}\phi(t):=\frac{\rho^{\beta}}{\Gamma(1-\beta)}
t^{1-\rho}				.\frac{d}{dt}\left(\int_a^t\frac{\tau^{\rho-1}}{(t^{\rho}
-\tau^{\rho})^{\beta}}\phi(\tau)d\tau\right), t\ge a$$
			$$\left(D^{\beta,\rho}_{b^-}\phi(t):=
-\frac{\rho^{\beta}}{\Gamma(1-\beta)}
t^{1-\rho}				.\frac{d}{dt}\left(\int_t^b\frac{\tau^{\rho-1}}
{(\tau^{\rho}-t^{\rho})^{\beta}}\phi(\tau)d\tau\right), t\le b\right).$$
			\item[(iii)] \cite{AMO16,A17} Let $\phi\in C^1[a,b]$. The left (right) Caputo-Katugampola (CK, in brief) fractional derivative of $\phi$ of order $\beta$ is defined by
			$${}^{C}D^{\beta,\rho}_{a^+}\phi(t):=
\frac{\rho^{\beta}}{\Gamma(1-\beta)}			\int_a^t\frac{\phi'(\tau)}{(\tau^{\rho}-t^{\rho})^{\beta}}d\tau, t\ge a$$
			$$\left({}^{C}D^{\beta,\rho}_{b^-}\phi(t):=
-\frac{\rho^{\beta}}{\Gamma(1-\beta)}			\int_t^b\frac{\phi'(\tau)}{(\tau^{\rho}-t^{\rho})^{\beta}}d\tau, t\le b\right).$$
		\end{enumerate}
	\end{definition}

	\begin{remark}
		\begin{enumerate}
			\item[(i)] $D^{\beta,\rho}_{a^+}\phi(t)=t^{1-\rho}	\frac{d}{dt}(I^{1-\beta,\rho}_{a^+}\phi(t)), D^{\beta,\rho}_{b^-}\phi(t)=-
t^{1-\rho}\frac{d}{dt}(I^{1-\beta,\rho}_{b^-}\phi(t)).$
			\item[(ii)] ${^CD}^{\beta,\rho}_{a^+}\phi(t)
=I^{1-\beta,\rho}_{a^+}\left(\tau^{1-\rho}\frac{d}{d\tau}\phi\right)(t), {^CD}^{\beta,\rho}_{b^-}\phi(t)=I^{1-\beta,\rho}_{b^-}
\left(-\tau^{1-\rho}\frac{d}{d\tau}\phi\right)(t).$
		\end{enumerate}
	\end{remark}
When $\rho=1$, the concepts in Definition \ref{defn2.14} reduce to
the Riemann-Liouville integral,  Riemann-Liouville derivative and Caputo derivative, see e.g. \cite{KST06}.

Now, we recall some properties of special functions in \cite{AAR99,SC12} and the references therein as follows.
\begin{remark}\label{rem2.6}
\begin{enumerate}
\item[(i)] The gamma function
$\Gamma:\mathbb{R}  \to\mathbb{R}$, beta function $B:\mathbb{R}\times \mathbb{R}  \to\mathbb{R}$ and digamma function $\psi:\mathbb{R}  \to\mathbb{R}$ are defined, respectively, by
$$\Gamma(x)=\int_0^{\infty}e^{-t} t^{x-1}dt, B(x,y)=\int_0^1(1-t)^{x-1} t^{y-1}dt, \psi(x)=\frac{\Gamma'(x)}{\Gamma(x)}.$$
The Pochhammer symbol of $\lambda\in\mathbb{R}$ is defined by $(\lambda)_n=\lambda(\lambda+1)...(\lambda+n-1)$ for $n>0$ and $(\lambda)_0=1$.
\item[(ii)] $\Gamma(x+1)=x\Gamma(x)$, $B(x,y)=\frac{\Gamma(x)\Gamma(y)}{\Gamma(x+y)}$ and
    $\psi(x)=\frac{d}{dx} (\ln (\Gamma (x))$.
\item[(iii)] $\frac{\Gamma(x+a)}{\Gamma(x)}=x^a\left(1+\frac{a(a-1)}{2x}
    +o(\frac{1}{x^2})\right)$, i.e., $\frac{\Gamma(x+a)}{\Gamma(x)}\thicksim x^a$ when $x\to\infty$.
\item[(iv)] $\psi'(x)=\frac{d^2}{dx^2}( \ln (\Gamma (x))=\sum\limits_{n=0}^{\infty}\left(\frac{1}{x+n}\right)^2>0$.
\end{enumerate}
\end{remark}
\begin{remark}\label{rem2.7}
\begin{enumerate}
\item[(i)] The hypergeometric function
$_2F_1:\mathbb{R}^3\times[0,1]\to \mathbb{R}$ and $_3F_2:\mathbb{R}^5\times[0,1]\to \mathbb{R}$ are defined, respectively, by
$$_2F_1(a,b;c;t)=\sum\limits_{n=0}^{\infty}\frac{(a)_n(b)_n}{(c)_n}.
\frac{t^n}{n!}\;\left({_3F_2}(a,b,c;d,e;t)=
\sum\limits_{n=0}^{\infty}\frac{(a)_n(b)_n(c)_n}{(d)_n(e)_n}.
\frac{t^n}{n!}\right).$$
The functions $_2F_1$ and $_3F_2$ converge for $|t|<1$.
\item[(ii)] $_2F_1(a,b;c;t)=_2F_1(b,a;c;t)$ and $_2F_1(0,b;c;t)=_2F_1(a,0;c;t)=1$.
\item[(iii)] The Euler’s integral
representation of hypergeometric function
$_2F_1:\mathbb{R}^3\times[0,1]\to \mathbb{R}$ is 			$$_2F_1(a,b;c;t)=\frac{\Gamma(a)\Gamma(c)}{\Gamma(c-a)}
			\int_0^1\tau^{a-1}(1-\tau)^{c-a-1}
			(1-t\tau)^{-b}d\tau.$$
The function $_2F_1$ converges when $c>a>0,\mid t\mid <1$.
The derivative of $_2F_1$ is
$$\frac{d}{dt}({_2F_1}(a,b;c;t))=\frac{ab}{c}{_2F_1}(a+1,b+1;c+1;t).$$
\item[(iv)] If $c>a+b$ and $c\not\in \{0,-1,-2,...\}$, then
$_2F_1(a,b;c;1)=\frac{\Gamma(c)\Gamma(c-a-b)}
{\Gamma(c-a)\Gamma(c-b)}.$
Especially,
$$_2F_1(1,b;c;1)=\frac{\Gamma(c)\Gamma(c-1-b)}
{\Gamma(c-1)\Gamma(c-b)}=\frac{c-1}{c-b-1}.$$
If $d+e>a+b+c>0$ and $d+e\not\in \{0,-1,-2,...\}$, then ${_3F_2}(a,b,c;d,e;1)$ converges.
\item[(v)] If $a=1$ and $c-b-1\le 0$,
\begin{eqnarray*}
_2F_1(1,b;c;1)&=&\frac{\Gamma(1)\Gamma(c)}{\Gamma(c-1)}
			\int_0^1\tau^{1-1}(1-\tau)^{c-1-1}
			(1-\tau)^{-b}d\tau\\
&=&-\frac{\Gamma(c)}{\Gamma(c-1)}
			\lim\limits_{\epsilon\to 0^+}\int_0^{1-\epsilon}(1-\tau)^{(c-b-1)-1}
			d(1-\tau)\\
&=&\left\{\begin{array}{ll}
-\frac{\Gamma(c)}{\Gamma(c-1)}\lim\limits_{\epsilon\to 0^+}			\left.\ln (1-\tau)\right|_0^{1-\epsilon}, & \mbox{if}\; c-b-1=0\\
-\frac{\Gamma(c)}{\Gamma(c-1)}\lim\limits_{\epsilon\to 0^+}			\left.\frac{(1-\tau)^{c-b-1}}{c-b-1}\right|_0^{1-\epsilon}, &  \mbox{if}\;c-b-1<0
\end{array}\right.\\
&=&+\infty.
\end{eqnarray*}
\end{enumerate}
\end{remark}
We generalize the results in Lemma 7 with $\rho=1$ in  \cite{TTKN25}  into the case of an arbitrary $\rho>0$ as follows.

\begin{lemma}\label{lem2.15} Let $\beta\in (0,1)$ and $\rho>0$.
		\begin{enumerate}\item[(i)] We have
			$$u_1(t):=\int_0^t(t^\rho-\tau^\rho)^{\beta-1}			\tau^{\rho-1}(1-\tau^\rho)^{\beta-1}d\tau=\frac{t^{\beta \rho}}{\beta\rho}.
			{_2F_1}(1,1-\beta;1+\beta;t^\rho),$$
			$$u_2(t):=\int_0^t(t^\rho-\tau^\rho)^{\beta-1}			\tau^{\rho-1}(1-\tau^\rho)^{\beta}d\tau=\frac{t^{\beta\rho}}{\beta\rho}.
			{_2F_1}(1,-\beta;1+\beta;t^\rho).$$

			\item[(ii)] If $\beta>\frac{1}{2}$, then
			$$u_1(1)=\frac{1}{\beta\rho}.
{_2F_1}(1,1-\beta;1+\beta;1)=\frac{1}{(2\beta-1)\rho},$$
			$$u_2(1)=\frac{1}{\beta\rho}.{_2F_1}(1,-\beta;1+\beta;1)
=\frac{1}{2\beta\rho}.$$
			\item[(iii)]
$$\int_0^1u_1(t)dt
=\frac{1}{\beta\rho(\beta\rho+1)}. {_3F_2}(1,1-\beta,\beta+\frac{1}{\rho};1+\beta,\beta+\frac{1}{\rho}+1;1),$$
$$\int_0^1u_2(t)dt
=\frac{1}{\beta\rho(\beta\rho+1)}. {_3F_2}(1,-\beta,\beta+\frac{1}{\rho};1+\beta,\beta+\frac{1}{\rho}+1;1).$$
		
		\end{enumerate}
	\end{lemma}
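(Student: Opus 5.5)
The plan is to reduce everything to the Euler integral representation and the Gauss summation formula of Remark \ref{rem2.7}, using the substitution $s=\tau^\rho$ followed by a rescaling.

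\textbf{Part (i).} Fix $t\in(0,1]$ and put $x=t^\rho$. The substitution $s=\tau^\rho$ gives $ds=\rho\tau^{\rho-1}d\tau$, so
$$u_1(t)=\frac{1}{\rho}\int_0^{x}(x-s)^{\beta-1}(1-s)^{\beta-1}ds .$$
Next set $s=xv$, which yields
$$u_1(t)=\frac{x^{\beta}}{\rho}\int_0^1(1-v)^{\beta-1}(1-xv)^{\beta-1}dv .$$
This is the Euler integral with $a=1$, $c=1+\beta$ (so $c-a-1=\beta-1$) and $b=1-\beta$. I would use the standard normalization $\frac{\Gamma(c)}{\Gamma(a)\Gamma(c-a)}$ for the prefactor; the display in Remark \ref{rem2.7}(iii) has the gammas placed slightly differently. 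With this normalization the prefactor is $\Gamma(1+\beta)/\Gamma(\beta)=\beta$. The integral therefore equals $\frac1\beta\,{}_2F_1(1,1-\beta;1+\beta;x)$, and this gives the formula for $u_1$. The same computation with the exponent $\beta$ in place of $\beta-1$, i.e. $b=-\beta$, gives $u_2$. The conditions $c>a>0$ and $|x|<1$ hold for $t<1$. At $t=1$ the integral converges directly, which feeds into (ii).

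\textbf{Part (ii).} Evaluate at $t=1$ with Remark \ref{rem2.7}(iv). For $u_1$ we have $c-a-b=2\beta-1$, which is positive exactly when $\beta>\frac12$. This gives
$${}_2F_1(1,1-\beta;1+\beta;1)=\frac{\Gamma(1+\beta)\Gamma(2\beta-1)}{\Gamma(\beta)\Gamma(2\beta)}=\frac{\beta}{2\beta-1},$$
hence $u_1(1)=\frac{1}{(2\beta-1)\rho}$. As a cross-check, $u_1(1)=\frac1\rho\int_0^1(1-s)^{2\beta-2}ds$ directly. For $u_2$ we have $c-a-b=2\beta>0$, and the value is $\frac{\Gamma(1+\beta)\Gamma(2\beta)}{\Gamma(\beta)\Gamma(1+2\beta)}=\frac12$. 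Hence $u_2(1)=\frac{1}{2\beta\rho}$. Continuity of $u_i$ up to $t=1$, which follows from the direct integral, justifies plugging in $x=1$ (Abel's theorem).

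\textbf{Part (iii).} Expand $u_1(t)=\frac{1}{\beta\rho}\sum_n\frac{(1)_n(1-\beta)_n}{(1+\beta)_n\,n!}\,t^{\beta\rho+n\rho}$ and integrate term by term. Each term contributes $\int_0^1t^{\beta\rho+n\rho}dt=\frac{1}{\beta\rho+n\rho+1}$. With $\gamma=\beta+\frac1\rho$ this equals
$$\frac{1}{\beta\rho+1}\cdot\frac{\gamma}{\gamma+n}=\frac{1}{\beta\rho+1}\cdot\frac{(\gamma)_n}{(\gamma+1)_n}.$$
This produces exactly the stated ${}_3F_2$ series, and the same works for $u_2$.

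The main obstacle is justifying the interchange of sum and integral up to $t=1$. For $u_1$ all coefficients are nonnegative, so monotone convergence applies. For $u_2$ the coefficients $(-\beta)_n$ have constant (negative) sign for $n\ge1$, so monotone convergence applies to the tail. In both cases the limit is finite, because the resulting ${}_3F_2$ at $1$ converges by Remark \ref{rem2.7}(iv): $d+e-(a+b+c)=\beta+1>0$ for the $u_1$ series and $\beta+2>0$ for the $u_2$ series.
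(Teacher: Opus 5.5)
Your proof is correct and follows the paper's route. The substitution $\tau^\rho=t^\rho v$, which the paper performs in one step, reduces $u_1,u_2$ to Euler's integral with $a=1$, $c=1+\beta$; part (ii) is Gauss's summation at $z=1$; and part (iii) is termwise integration using $\frac{1}{\rho n+\beta\rho+1}=\frac{1}{\beta\rho+1}\cdot\frac{(\gamma)_n}{(\gamma+1)_n}$ with $\gamma=\beta+\frac{1}{\rho}$. You also supply what the paper leaves implicit: the correct Euler normalization, the behaviour at $t=1$, and monotone convergence for the interchange of sum and integral.

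There are two small slips, neither affecting validity. The excesses $d+e-(a+b+c)$ of the two ${}_3F_2$ series are $2\beta$ and $2\beta+1$, not $\beta+1$ and $\beta+2$; both are still positive, so convergence holds. The direct integral $u_1(1)$ converges only for $\beta>\frac{1}{2}$, which is precisely where you use it.
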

\begin{proof}
		(i) By changing variable $\tau_1=\frac{\tau^\rho}{t^\rho}$, we have $\tau^\rho=t^\rho\tau_1\to \rho\tau^{\rho-1}d\tau=t^\rho\tau_1d\tau_1$ and
		\begin{eqnarray*}
			\int_0^t(t^\rho-\tau^\rho)^{\beta-1}			\tau^{\rho-1}(1-\tau^\rho)^{\beta-1}d\tau
&=&\int_0^1(t^\rho-t^\rho\tau_1)^{\beta-1}
			(1-t^\rho\tau_1)^{\beta-1}.\frac{t^\rho}{\rho}d\tau_1\\
			&=& \frac{t^{\beta \rho}}{\rho}\int_0^1(1-\tau_1)^{\beta-1}
			(1-t^\rho\tau_1)^{\beta-1}d\tau_1\\
			&=& \frac{t^{\beta \rho}}{\rho}\int_0^1\tau_1^{1-1}(1-\tau_1)^{(1+\beta)-1-1}
			(1-t^\rho\tau_1)^{-(1-\beta)}d\tau_1\\
			&=&\frac{t^{\beta \rho}}{\rho}.\frac{\Gamma((1+\beta)-1)}{\Gamma(1)\Gamma(1+\beta)}
			{_2F_1}(1,1-\beta;1+\beta;t^\rho)\\
			&=&\frac{t^{\beta \rho}}{\rho}.\frac{\Gamma(\beta)}{\Gamma(1+\beta)}.
			{_2F_1}(1,1-\beta;1+\beta;t^\rho)\\
			&=&\frac{t^{\beta \rho}}{\beta\rho}.
			{_2F_1}(1,1-\beta;1+\beta;t^\rho).
		\end{eqnarray*}
		Similarly,
		\begin{eqnarray*}
			\int_0^t(t^\rho-\tau^\rho)^{\beta-1}			\tau^{\rho-1}(1-\tau^\rho)^{\beta}d\tau
&=&\int_0^1(t^\rho-t^\rho\tau_1)^{\beta-1}
			(1-t^\rho\tau_1)^{\beta}.\frac{t^\rho}{\rho}d\tau_1\\
			&=& \frac{t^{\beta \rho}}{\rho}\int_0^1(1-\tau_1)^{\beta}
			(1-t^\rho\tau_1)^{\beta}d\tau_1\\
			&=& \frac{t^{\beta \rho}}{\rho}\int_0^1\tau_1^{1-1}(1-\tau_1)^{(1+\beta)-1-1}
			(1-t^\rho\tau_1)^{-(-\beta)}d\tau_1\\
			&=&\frac{t^{\beta \rho}}{\rho}.\frac{\Gamma((1+\beta)-1)}{\Gamma(1)\Gamma(1+\beta)}
			{_2F_1}(1,-\beta;1+\beta;t^\rho)\\
			&=&\frac{t^{\beta \rho}}{\rho}.\frac{\Gamma(\beta)}{\Gamma(1+\beta)}.
			{_2F_1}(1,-\beta;1+\beta;t^\rho)\\
			&=&\frac{t^{\beta \rho}}{\beta\rho}.
			{_2F_1}(1,-\beta;1+\beta;t^\rho).
		\end{eqnarray*}
		\noindent(ii) Let $\beta>\frac{1}{2}$. Then, $c-b-a=2\beta-1>0$ and by Remark \ref{rem2.7} (iv),
		\begin{eqnarray*}			u_1(1)=\frac{1}{\beta\rho}{_2F_1}(1,1-\beta;1+\beta;1) &=&\frac{1}{\beta\rho}.\frac{1+\beta-1}{1+\beta-(1-\beta)-1}\\
			&=&\frac{1}{(2\beta-1)\rho},
		\end{eqnarray*}
		and
		\begin{eqnarray*}
			u_2(1)=\frac{1}{\beta\rho}{_2F_1}(1,-\beta;1+\beta;1)
&=&\frac{1}{\beta\rho}.\frac{1+\beta-1}{1+\beta-(-\beta)-1}\\
			&=&\frac{1}{2\beta\rho}.
		\end{eqnarray*}
		\noindent (iii) From the definition and $\frac{(1)_n}{n!}=\frac{n!}{n!}=1$, we get that
			\begin{eqnarray*}
			\int_0^1u_1(t)dt&=&\frac{1}{\beta\rho}\int_0^1
t^{\beta\rho}{_2F_1}(1,1-\beta;1+\beta;t^\rho)dt\\
			&=&\frac{1}{\beta\rho}\int_0^1
t^{\beta\rho}\left(\sum\limits_{n=0}^{\infty}
\frac{(1)_n(1-\beta)_n}{(1+\beta)_n}.
\frac{(t^\rho)^n}{n!}\right)dt\\
	&=&\frac{1}{\beta\rho}\int_0^1
\sum\limits_{n=0}^{\infty}
\frac{(1)_n(1-\beta)_n}{(1+\beta)_n}.
\frac{t^{\rho n+\beta\rho}}{n!}dt	\\
&=&\frac{1}{\beta\rho}.\sum\limits_{n=0}^{\infty}
\frac{(1)_n(1-\beta)_n}{(1+\beta)_n n!}.
\left.\frac{t^{\rho( n+\beta)+1}}{\rho( n+\beta)+1}
			\right|_0^{1} \\
&=&\frac{1}{\beta\rho}.\sum\limits_{n=0}^{\infty}
\frac{(1)_n(1-\beta)_n}{(1+\beta)_n n!}.
\frac{1}{\rho( n+\beta+\frac{1}{\rho})}	\\
&=&\frac{1}{\beta\rho(\beta\rho+1)}.\sum\limits_{n=0}^{\infty}
\frac{(1)_n(1-\beta)_n(\beta+\frac{1}{\rho})_n}{(1+\beta)_n
(\beta+1+\frac{1}{\rho})_n}.
\frac{1}{n!}\\
&=&	\frac{1}{\beta\rho(\beta\rho+1)}. {_3F_2}(1,1-\beta,\beta+\frac{1}{\rho};1+\beta,
\beta+\frac{1}{\rho}+1;1),
		\end{eqnarray*}
since
$$\frac{1}{\rho( n+\beta+\frac{1}{\rho})}=\frac{1}{\rho(\beta+\frac{1}{\rho})}.
\frac{\beta+\frac{1}{\rho}}{n+\beta+\frac{1}{\rho}}=
\frac{1}{\beta\rho +1}.
\frac{(\beta+\frac{1}{\rho})_n}{(\beta+1+\frac{1}{\rho})_n}.$$
Similarly,
		\begin{eqnarray*}
			\int_0^1u_2(t)dt&=&\frac{1}{\beta\rho}\int_0^1
t^{\beta\rho}{_2F_1}(1,-\beta;1+\beta;t^\rho)dt\\
			&=&\frac{1}{\beta\rho}\int_0^1
\sum\limits_{n=0}^{\infty}
\frac{(1)_n(-\beta)_n}{(1+\beta)_n}.
\frac{t^{\rho n+\beta\rho}}{n!}dt\\
&=&	\frac{1}{\beta\rho(\beta\rho+1)}. {_3F_2}(1,-\beta,\beta+\frac{1}{\rho};1+\beta,\beta+\frac{1}{\rho};1).
		\end{eqnarray*}
	\end{proof}
\section{Granular fractional Caputo-Katugampola derivatives}
	In the line of \cite{AMO16,A17,K11,NZ17}, we propose the granular  KRL integral, granular  KRL fractional derivatives and granular  CK fractional derivatives as follows.

\begin{definition}
		Let $\widetilde{\phi}:[a,b]\to \mathbf{F}_C(\mathbb{R})$ be of class $\mathcal{C}([a,b])$ and $0<\beta<1, \rho>0$.
		\begin{enumerate}
			\item[(i)] The left (right) granular  KRL fractional integral of $\phi$ of order $\beta$ is
			$${_{gr}\mathcal{I}}^{\beta,\rho}_{a^+}\widetilde{\phi}(t):=
			\frac{\rho^{1-\beta}}{\Gamma(\beta)}
			\widetilde{\int}_a^t
\frac{\tau^{\rho-1}}{(t^{\rho}-\tau^{\rho})^{1-\beta}}
\widetilde{\phi}(\tau)d\tau, t\ge a$$
			$$\left({_{gr}\mathcal{I}}^{\beta,\rho}_{b^-}\widetilde{\phi}(t):=
			\frac{\rho^{1-\beta}}{\Gamma(\beta)}
			\widetilde{\int}_t^b
\frac{\tau^{\rho-1}}{(\tau^{\rho}-t^{\rho})^{1-\beta}}
\widetilde{\phi}(\tau)d\tau, t\le b\right).$$
			\item[(ii)] The left (right) granular  KRL fractional derivative of $\phi$ of order $\beta$ is defined by
			$${_{gr}\mathcal{D}}^{\beta,\rho}_{a^+}\widetilde{\phi}(t):=
\frac{\rho^{\beta}}{\Gamma(1-\beta)}
t^{1-\rho}.				\frac{d_{gr}}{dt}\left(\widetilde{\int}_a^t
\frac{\tau^{\rho-1}}{(t^{\rho}-\tau^{\rho})^{1-\beta}}
			\widetilde{\phi}(\tau)d\tau\right), t\ge a$$
			$$\left({_{gr}\mathcal{D}}^{\beta,\rho}_{b^-}\widetilde{\phi}(t):=
-\frac{\rho^{\beta}}{\Gamma(1-\beta)}
t^{1-\rho}.		
\frac{d_{gr}}{dt}\left(\widetilde{\int}_t^b
\frac{\tau^{\rho-1}}{(\tau^{\rho}-t^{\rho})^{1-\beta}}
			\widetilde{\phi}(\tau)\right), t\le b\right).$$
			\item[(iii)] Let $\phi\in \mathcal{C}^1([a,b])$.  The left (right) Caputo fractional derivative of $\phi$ of order $\beta$ is
			$${_{gr}^C\mathcal{D}}^{\beta,\rho}_{a^+}\widetilde{\phi}(t):=
			\frac{\rho^{\beta}}{\Gamma(1-\beta)}
			\widetilde{\int}_a^t			\frac{1}{(t^{\rho}-\tau^{\rho})^{\beta}}\widetilde{\dot{\phi}}(\tau)d\tau, t\ge a$$
			$$\left({_{gr}^C\mathcal{D}}^{\beta,\rho}_{b^-}
\widetilde{\phi}(t):=
			-\frac{\rho^{\beta}}{\Gamma(1-\beta)}
			\widetilde{\int}_t^b
   \frac{1}{(\tau^{\rho}-t^{\rho})^{\beta}}
			\widetilde{\dot{\phi}}(\tau)d\tau, t\le b\right).$$
		\end{enumerate}
	\end{definition}
\begin{remark}
			 When $\rho=1$, the above definitions become the definitions of granular  RL integral, granular  RL derivative and granular  Caputo derivative in \cite{NZ17}.

	\end{remark}
By the similar proof to the proof of \cite{NZ17}, we have the following properties.
	\begin{remark}
		\begin{enumerate}
			\item[(i)] $\mathcal{H}({_{gr}\mathcal{I}}^{\beta,\rho}_{a^+}\widetilde{\phi}(t))
			=I_{a^+}^{\beta,\rho}\phi^{gr}(t,\beta_{\phi}),
			\mathcal{H}({_{gr}\mathcal{I}}^{\beta,\rho}_{b^-}\widetilde{\phi}(t))
			=I_{b^-}^{\beta,\rho}\phi^{gr}(t,\beta_{\phi})$.
			\item[(ii)] $\mathcal{H}({_{gr}\mathcal{D}}^{\beta,\rho}_{a^+}\widetilde{\phi}(t))=
			D^{\beta,\rho}_{a^+}
			\phi^{gr}(t,\beta_{\phi}), \mathcal{H}({_{gr}\mathcal{D}}^{\beta,\rho}_{b^-}\widetilde{\phi}(t))=
			D^{\beta,\rho}_{b^-}
			\phi^{gr}(t,\beta_{\phi})$.
			\item[(iii)] $\mathcal{H}({_{gr}^C\mathcal{D}}^{\beta,\rho}_{a^+}\widetilde{\phi}(t))=
			{^CD}^{\beta,\rho}_{a^+}
			\dot{\phi}^{gr}(t,\beta_{\phi}), \mathcal{H}({_{gr}^C\mathcal{D}}^{\beta,\rho}_{b^-}\widetilde{\phi}(t))=
			{^CD}^{\beta,\rho}_{b^-}
			\dot{\phi}^{gr}(t,\beta_{\phi})$.
		\end{enumerate}
	\end{remark}

Using the granular approach to consider the fuzzy fractional derivatives has some advantages
for other approaches in some cases. Firstly, the calculus rules of the fuzzy fractional
derivatives have some advantages in some cases, see e.g. \cite{BGMNST24}, \cite{DLK20}, \cite{HP24} ,\cite{MPK17},  \cite{TGBMH25}. Moreover, the granular fractional derivatives could be existed in some
cases that $gH$-derivatives do not exist, see e.g. \cite{MPK17,NZ17}. The following example shows that our approach based on granular derivatives has advantages over the approach using $gH$-derivatives in \cite{HVD19} in some cases.
\begin{example} (i) Consider following triangular fuzzy function in $t\in [0,2]$, see \cite{MPK17,NZ17} and references therein,
$$\tilde{\phi}(t)=\left(\frac{t^3}{3},\frac{t^3}{3}+t+3,
\frac{2t^3}{3}+4\right).$$
Hence, the parametric form of $\tilde{\phi}(t)$ is
$$[\tilde{\phi}(t)]^\mu=[\phi^L_\mu(t),\phi^R_\mu(t)]
=\left[\frac{1}{3}t^3+\mu t+3\mu,
\left(-\frac{1}{3}\mu+\frac{2}{3}\right)t+\mu t+4-\mu\right].$$
We get that, for each $\mu\in [0,1]$, $\tilde{\phi}(t)$ is $d$-monotone in $[0,2]$, since
$$diam([\tilde{\phi}(t)]^\mu)=\phi^R_\mu(t)-\phi^L_\mu(t)
=\left(-\frac{\mu}{3}+\frac{1}{3}\right)t^3+4-3\mu$$
has $\frac{d}{dt}\left(diam([\tilde{\phi}(t)]^\mu)\right)
=(1-\mu)t^2\ge 0$ for all $t\in [0,2]$. Moreover, for each $\mu\in [0,1]$, $\phi^L_\mu(t)$ and $\phi^R_\mu(t)$ are in $AC([0,2],\mathbb{R})$,
leading that $\tilde{\phi}(t)\in AC([0,2],\mathbf{F}_C(\mathbb{R}))$,
see \cite{L15}. However, $gH$-derivative of $\tilde{\phi}(t)$ does not exist (see  \cite{MPK17,NZ17}). Hence, we can not utilize Theorem 2.4 in \cite{HVD19} to calculate both left and right $gH$-CK fractional derivatives of $\tilde{\phi}(t)$.

\noindent (ii)   Now, since
$$\mathcal{H}(\tilde{\phi}(t))
=\phi^{gr}(t,\mu,\alpha_{\phi})=\frac{1}{3}t^3+\mu t+3\mu
+\left(\left(-\frac{\mu}{3}+\frac{1}{3}\right)t^3+4
-3\mu\right)\alpha_{\phi},$$
we get that
$$\frac{\phi^{gr}}{dt}=\dot{\phi}^{gr}=\left(
(1-\mu)\alpha_{\phi}+1\right)t^2+\mu.$$

For $\beta\in (0,1)$ and $\rho>0$,
\begin{eqnarray*}
\mathcal{H}\left({_{gr}^C\mathcal{D}}^{\beta,\rho}_{0^+}
\widetilde{\phi}(t)\right)
&=&{^CD}^{\beta,\rho}_{0^+}
			\dot{\phi}^{gr}(t,\beta_{\phi})\\
&=&\frac{\rho^{\beta}}{\Gamma(1-\beta)}\left(
\left((1-\mu)\alpha_{\phi}+1\right)
\int_0^t \frac{\tau^2}{(t^{\rho}-\tau^\rho)^\beta}d\tau + \mu
\int_0^t \frac{1}{(t^{\rho}-\tau^\rho)^\beta}d\tau \right)\\
&=&\frac{\rho^{\beta}}{\Gamma(1-\beta)}
\left(\left((1-\mu)\alpha_{\phi}+1\right)I_1 + \mu I_2 \right).
\end{eqnarray*}
By setting $u=\frac{\tau^\rho}{t^\rho}$, one gets that $\tau=t.u^{1/\rho}, d\tau=\frac{t}{\rho}u^{\frac{1}{\rho}-1}du$,
 $\tau=0\rightarrow u=0, \tau=t\rightarrow u=1$, and hence,
 $$I_1=\int_0^t \frac{\tau^2}{(t^{\rho}-\tau^\rho)^\beta}d\tau
 =\int_0^1 \frac{(t.u^{1/\rho})^2}{(t^\rho-t^\rho.u)^\beta}.
 \frac{t}{\rho}u^{\frac{1}{\rho}-1}du$$
 $$=\frac{t^{3-\rho\beta}}{\rho}\int_0^1 u^{\frac{3}{\rho}-1}(1-u)^{-\beta}du
 =\frac{t^{3-\rho\beta}}{\rho}B(\frac{3}{\rho},1-\beta),$$
 $$I_2=\int_0^t \frac{1}{(t^{\rho}-\tau^\rho)^\beta}d\tau=
\frac{t^{1-\rho\beta}}{\rho}B(\frac{1}{\rho},1-\beta).$$
Thus,
\begin{eqnarray*}
\mathcal{H}\left({_{gr}^C\mathcal{D}}^{\beta,\rho}_{0^+}
\widetilde{\phi}(t)\right)&=&\frac{\rho^{\beta}}{\Gamma(1-\beta)}
\left(\left((1-\mu)\alpha_{\phi}+1\right)
\frac{t^{3-\rho\beta}}{\rho}B(\frac{3}{\rho},1-\beta)
+\mu\frac{t^{1-\rho\beta}}{\rho}B(\frac{1}{\rho},1-\beta)\right)\\
&=&\frac{\rho^{\beta-1}}{\Gamma(1-\beta)}
\left(\left((1-\mu)\alpha_{\phi}+1\right)
B(\frac{3}{\rho},1-\beta).t^{3-\rho\beta}
+\mu B(\frac{1}{\rho},1-\beta).t^{1-\rho\beta}\right).
\end{eqnarray*}
Setting
$$f(\alpha_{\phi}):=\frac{\rho^{\beta-1}}{\Gamma(1-\beta)}
\left(\left((1-\mu)\alpha_{\phi}+1\right)
B(\frac{3}{\rho},1-\beta).t^{3-\rho\beta}
+\mu B(\frac{1}{\rho},1-\beta).t^{1-\rho\beta}\right),$$
one gets that $f(\alpha_{\phi})=a_1.\alpha_{\phi}+b_1$ is a linear function with
$$a_1=\frac{\rho^{\beta-1}}{\Gamma(1-\beta)}(1-\gamma)
B(\frac{3}{\rho},1-\beta).t^{3-\rho\beta}\ge 0$$
for all $t\in [0,2]$. Therefore, for each $t\in [0,2]$,
$$h^L(\gamma)=\min\limits_{\alpha_{\phi}}		\phi^{gr}(t,\gamma,\alpha_{\phi})=f(0)
=\frac{\rho^{\beta-1}}{\Gamma(1-\beta)}
\left(B(\frac{3}{\rho},1-\beta)t^{3-\rho\beta}
+\gamma B(\frac{1}{\rho},1-\beta)t^{1-\rho\beta}\right),$$
$$h^R(\gamma)=\max\limits_{\alpha_{\phi}}		\phi^{gr}(t,\gamma,\alpha_{\phi})=f(1)
=\frac{\rho^{\beta-1}}{\Gamma(1-\beta)}
\left((2-\gamma)
B(\frac{3}{\rho},1-\beta)t^{3-\rho\beta}
+\gamma B(\frac{1}{\rho},1-\beta)t^{1-\rho\beta}\right).$$
Similarly, we deduce from
$\frac{d}{d\gamma}\left(h^L(\gamma)\right)\ge 0$ that
$$\inf\limits_{\gamma\ge \mu}\min\limits_{\alpha_{\phi}}		\phi^{gr}(t,\gamma,\alpha_{\phi})=h^L(\mu)
=\frac{\rho^{\beta-1}}{\Gamma(1-\beta)}
\left(B(\frac{3}{\rho},1-\beta)t^{3-\rho\beta}
+\mu B(\frac{1}{\rho},1-\beta)t^{1-\rho\beta}\right).$$
On the others hand,
\begin{eqnarray*}
\frac{d}{d\gamma}\left(h^R(\gamma)\right)
&=&\frac{\rho^{\beta-1}}{\Gamma(1-\beta)}
\left(-
B(\frac{3}{\rho},1-\beta)t^{3-\rho\beta}
+B(\frac{1}{\rho},1-\beta)t^{1-\rho\beta}\right)\ge 0\\
&\Leftrightarrow& B(\frac{3}{\rho},1-\beta)t^{3-\rho\beta}\le
B(\frac{1}{\rho},1-\beta)t^{1-\rho\beta}\\
&\Leftrightarrow& t^2\le \frac{B(\frac{1}{\rho},1-\beta)}
{B(\frac{3}{\rho},1-\beta)}
\Leftrightarrow t\le\sqrt{\frac{B(\frac{1}{\rho},1-\beta)}
{B(\frac{3}{\rho},1-\beta)}} (t\in [0,2]).
\end{eqnarray*}
Setting $r=\frac{1}{\rho}\in (0,\infty)$ and $f(\beta,r):=\frac{B(r,1-\beta)}
{B(3r,1-\beta)}$, we get that
$$f(\beta,r)=\frac{\Gamma(r)\Gamma(1-\beta)}
{\Gamma(r+1-\beta)}.
\frac{\Gamma(3r+1-\beta)}
{\Gamma(3r)\Gamma(1-\beta)}=
\frac{\Gamma(r)}
{\Gamma(3r)}.
\frac{\Gamma(3r+1-\beta)}
{\Gamma(r+1-\beta)}$$
$$\Rightarrow \ln f(\beta,r)=\ln \Gamma(3) -\ln \Gamma(3r) +\ln \Gamma(3r+1-\beta)-
\ln \Gamma(r+1-\beta) $$
$$\Rightarrow \frac{\partial f(\beta,r)}{\partial \beta}=
f(\beta,r)\left(\psi(3r+1-\beta)-
\psi(r+1-\beta)\right)>0,$$
since the digamma function $\psi$ is increasing on $(0,+\infty)$ and $r+1-\beta<3r+1-\beta$.
Hence, $f(\beta,r)$ has no stationary point and its supremum obtains on the boundary of $D=\{(\beta,r) \mid 0<\beta<1,r>0\}$.

If $\rho\to 0^+$ or $r\to\infty$, we deduce from Remark \ref{rem2.6} (ii) that
$$\lim\limits_{r\to\infty}f(\beta,r)=\lim\limits_{r\to\infty}
\frac{\Gamma(r)}{\Gamma(3r)}.\frac{\Gamma(3r+1-\beta)}
{\Gamma(r+1-\beta)}=\frac{(3r)^{1-\beta}}{r^{1-\beta}}=3^{1-\beta}<3.$$

If $\rho\to \infty$ or $r\to 0^+$, we deduce from
$\lim\limits_{x\to 0^+}x\Gamma(x)=\lim\limits_{x\to 0^+}\Gamma(x+1)
=\Gamma(1)=1$ that
$$\lim\limits_{r\to 0^+}f(\beta,r)=\lim\limits_{r\to 0^+}
\frac{\Gamma(r)}{\Gamma(3r)}.\frac{\Gamma(3r+1-\beta)}
{\Gamma(r+1-\beta)}=
\lim\limits_{r\to 0^+}
\frac{3r}{r}.\frac{r+1-\beta}
{3r+1-\beta}=\frac{(3r)^{1-\beta}}{r^{1-\beta}}=1.$$

 Hence, for all $(\beta,\rho)$, one gets $\sqrt{\frac{B(\frac{1}{\rho},1-\beta)}
{B(\frac{3}{\rho},1-\beta)}}
=\sqrt{f(\beta,\frac{1}{\rho})}<\sqrt{3}<2$
implying that
$$\sup\limits_{\gamma\ge \mu}\max\limits_{\alpha_{\phi}}		\phi^{gr}(t,\gamma,\alpha_{\phi})=\left\{\begin{array}{c}
h^R(1),\mbox{if}\; 0\le t\le \sqrt{\frac{B(\frac{1}{\rho},1-\beta)}
{B(\frac{3}{\rho},1-\beta)}}  \\
h^R(\mu), \mbox{if}\; \sqrt{\frac{B(\frac{1}{\rho},1-\beta)}
{B(\frac{3}{\rho},1-\beta)}}\le t\le 2.
\end{array}
\right.$$
Hence, ${_{gr}^C\mathcal{D}}^{\beta,\rho}_{0^+}
\widetilde{\phi}(t)$ exists and its $\mu$-level sets are
$${\scriptsize\left[{_{gr}^C\mathcal{D}}^{\beta,\rho}_{0^+}
\widetilde{\phi}(t)\right]^\mu=\left\{\begin{array}{c}
\left[\frac{\rho^{\beta-1}}{\Gamma(1-\beta)}
\left(B(\frac{3}{\rho},1-\beta)t^{3-\rho\beta}
+\mu B(\frac{1}{\rho},1-\beta)t^{1-\rho\beta}\right),
h^R(1)\right],\mbox{if}\; 0\le t\le \sqrt{\frac{B(\frac{1}{\rho},1-\beta)}
{B(\frac{3}{\rho},1-\beta)}}  \\
\left[\frac{\rho^{\beta-1}}{\Gamma(1-\beta)}
\left(B(\frac{3}{\rho},1-\beta)t^{3-\rho\beta}
+\mu B(\frac{1}{\rho},1-\beta)t^{1-\rho\beta}\right),
h^R(\mu)\right], \mbox{if}\; \sqrt{\frac{B(\frac{1}{\rho},1-\beta)}
{B(\frac{3}{\rho},1-\beta)}}\le t\le 2.
\end{array}
\right.}$$
\begin{figure}[htbp]
    \centering
    \begin{tabular}{cc}

        \includegraphics[width=0.45\textwidth]{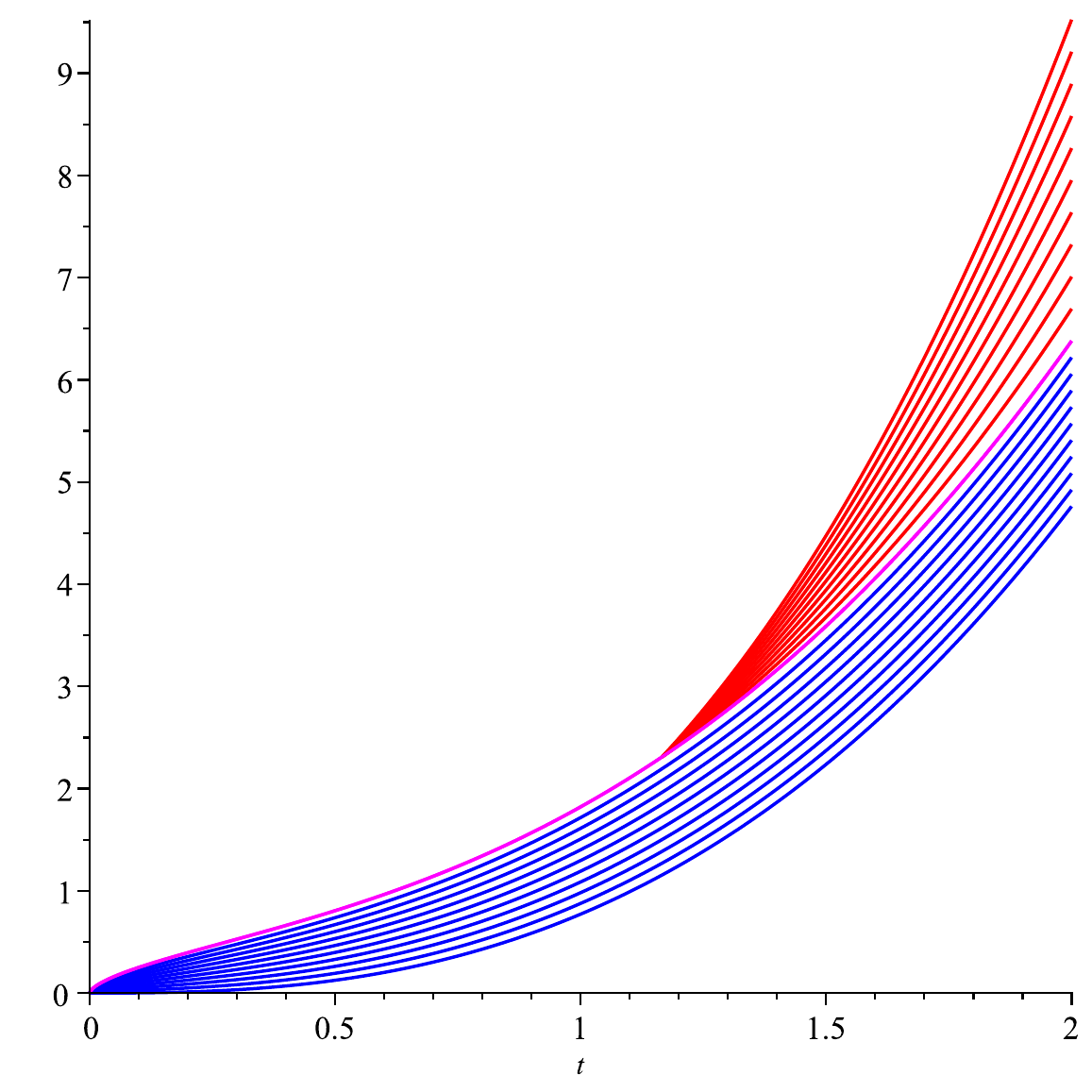} &
        \includegraphics[width=0.45\textwidth]{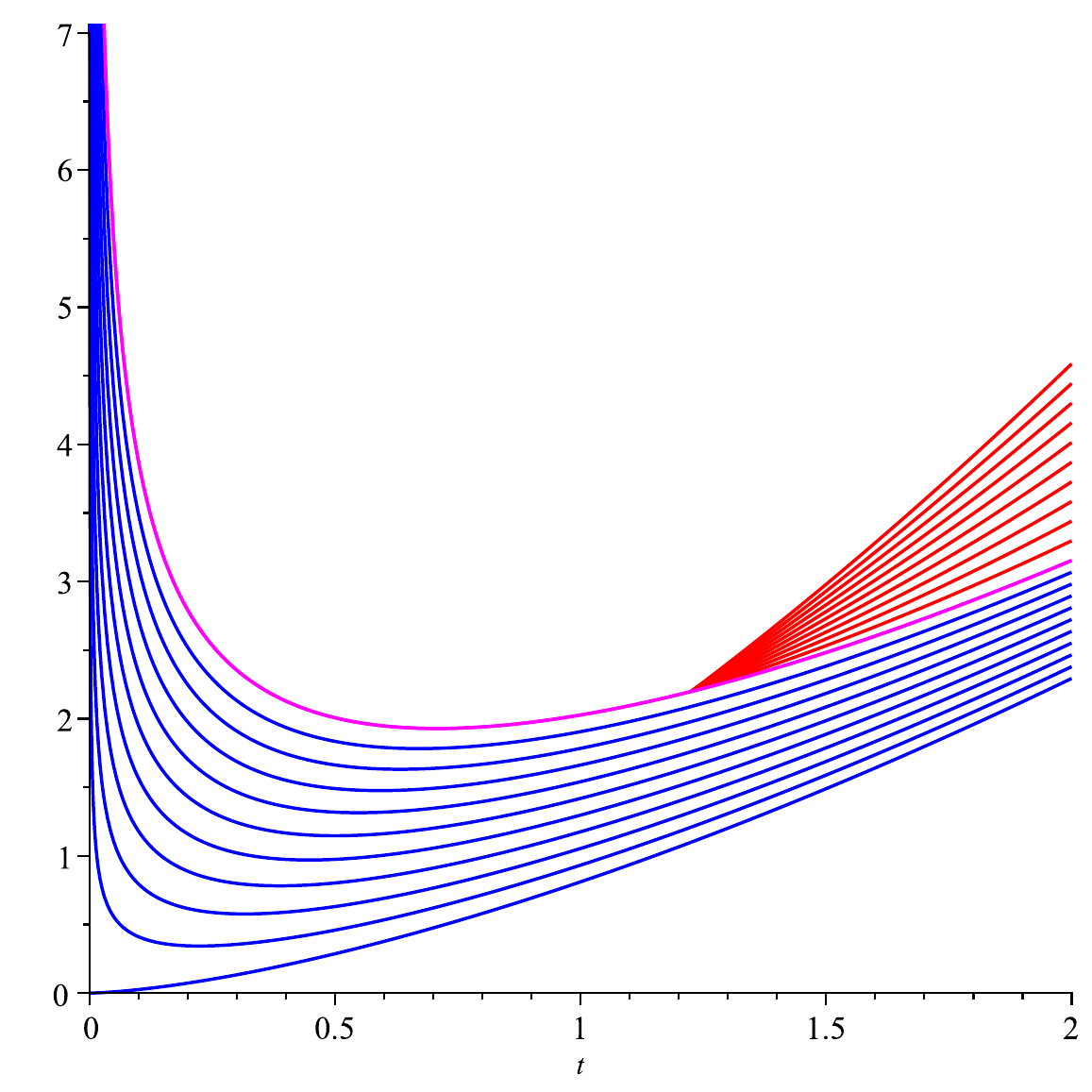} \\

        Figure 2.1.1. ${_{gr}^C\mathcal{D}}^{\frac{3}{4},\frac{1}{2}}_{0^+}
\widetilde{\phi}(t)$&
        Figure 2.1.2. ${_{gr}^C\mathcal{D}}^{\frac{3}{4},2}_{0^+}
\widetilde{\phi}(t)$ \\
    \end{tabular}
\end{figure}
It should be noted that when $\rho=1$, we get that
$$\sqrt{\frac{B(1,1-\beta)}
{B(3,1-\beta)}}=
\sqrt{\frac{\Gamma(1)\Gamma(1-\beta)}{\Gamma(2-\beta)}.
\frac{\Gamma(4-\beta)}{\Gamma(3)\Gamma(1-\beta)}}=
\sqrt{\frac{(3-\beta)(2-\beta)}{2}}.
$$
This implies that
$${\left[{_{gr}^C\mathcal{D}}^{\beta,1}_{0^+}
\widetilde{\phi}(t)\right]^\mu=\left\{\begin{array}{c}
\left[\frac{2t^{3-\beta}}{\Gamma(4-\beta)}+
\frac{\mu t^{1-\beta}}{\Gamma(2-\beta)},
\frac{2t^{3-\beta}}{\Gamma(4-\beta)}+
\frac{t^{1-\beta}}{\Gamma(2-\beta)}\right],\mbox{if}\; 0\le t\le \sqrt{\frac{(3-\beta)(2-\beta)}{2}}  \\
\left[\frac{2t^{3-\beta}}{\Gamma(4-\beta)}+
\frac{\mu t^{1-\beta}}{\Gamma(2-\beta)},
\frac{2(2-\mu)t^{3-\beta}}{\Gamma(4-\beta)}+
\frac{\mu t^{1-\beta}}{\Gamma(2-\beta)}\right], \mbox{if}\; \sqrt{\frac{(3-\beta)(2-\beta)}{2}}\le t\le 2,
\end{array}
\right.}$$
which coincides with the results in Example 2 in \cite{NZ17}.

\noindent (iii)  For $\beta\in (0,1)$ and $\rho>0$,
\begin{eqnarray*}
\mathcal{H}\left({_{gr}^C\mathcal{D}}^{\beta,\rho}_{2^-}
\widetilde{\phi}(t)\right)
&=&{^CD}^{\beta,\rho}_{2^-}
			\dot{\phi}^{gr}(t,\beta_{\phi})\\
&=&-\frac{\rho^{\beta}}{\Gamma(1-\beta)}\left(
\left((1-\mu)\alpha_{\phi}+1\right)
\int_t^2 \frac{\tau^2}{(\tau^\rho-t^{\rho})^\beta}d\tau + \mu
\int_t^2 \frac{1}{(\tau^\rho-t^{\rho})^\beta}d\tau \right)\\
&=&-\frac{\rho^{\beta}}{\Gamma(1-\beta)}
\left(\left((1-\mu)\alpha_{\phi}+1\right)J_1 + \mu J_2 \right).
\end{eqnarray*}
By setting $\tau^\rho=2^\rho-(2^\rho-t^\rho)u$, one gets that $\tau^\rho-t^\rho=(2^\rho-t^\rho)(1-u)$, $$\rho\tau^{\rho-1}d\tau=-(2^\rho-t^\rho)du \rightarrow d\tau=-\frac{2^\rho-t^\rho}{\rho\tau^{\rho-1}}du=
-\frac{2^\rho-t^\rho}
{\rho\left(2^\rho-(2^\rho-t^\rho)u\right)^{\frac{\rho-1}{\rho}}}du$$
 and $\tau=t\rightarrow u=1, \tau=2\rightarrow u=0$. Hence,
 \begin{eqnarray*}
J_1&=&\int_t^2 \frac{\tau^2}{(\tau^\rho-t^{\rho})^\beta}d\tau
 =\int_0^1 \frac{\left(2^\rho-(2^\rho-t^\rho)u\right)^{\frac{2}{\rho}}}
 {((2^\rho-t^\rho)(1-u))^\beta}.
 \frac{2^\rho-t^\rho}
{\rho\left(2^\rho-(2^\rho-t^\rho)u\right)^{\frac{\rho-1}{\rho}}}du\\
&=&\frac{(2^\rho-t^\rho)^{1-\beta}}{\rho}\int_0^1 (1-u)^{-\beta}(2^\rho-(2^\rho-t^\rho)u)^{\frac{3}{\rho}-1}du\\
 &=&\frac{(2^\rho-t^\rho)^{1-\beta}}{\rho}.2^{3-\rho}\int_0^1 (1-u)^{-\beta}
 \left(1-\left(1-\frac{t^\rho}{2^\rho}\right)u\right)^{-
 \left(1-\frac{3}{\rho}\right)}du\\
 &=&\frac{(2^\rho-t^\rho)^{1-\beta}}{\rho}.2^{3-\rho}\int_0^1 u^{1-1}(1-u)^{(2-\beta)-1-1}
 \left(1-\left(1-\frac{t^\rho}{2^\rho}\right)u\right)^{-
 \left(1-\frac{3}{\rho}\right)}du\\
 &=&\frac{(2^\rho-t^\rho)^{1-\beta}}{\rho}2^{3-\rho}.
 \frac{\Gamma(2-\beta-1)}{\Gamma(1)\Gamma(2-\beta)}
 {_2F_1}\left(1,1-\frac{3}{\rho};2-\beta;
 1-\frac{t^\rho}{2^\rho}\right)\\
 & =&\frac{(2^\rho-t^\rho)^{1-\beta}}{\rho(1-\beta)}2^{3-\rho}.
 {_2F_1}\left(1,1-\frac{3}{\rho};2-\beta;
 1-\frac{t^\rho}{2^\rho}\right).
\end{eqnarray*}
Similarly,
$$J_2=\int_t^2 \frac{1}{(\tau^\rho-t^{\rho})^\beta}d\tau=
\frac{(2^\rho-t^\rho)^{1-\beta}}{\rho(1-\beta)}2^{1-\rho}.
 {_2F_1}\left(1,1-\frac{1}{\rho};2-\beta;
 1-\frac{t^\rho}{2^\rho}\right),$$
and hence,
$$\mathcal{H}\left({_{gr}^C\mathcal{D}}^{\beta,\rho}_{2^-}
\widetilde{\phi}(t)\right)
=A\left(4\left((1-\mu)\alpha_{\phi}+1\right).
 {_2F_1}(1,1-\frac{3}{\rho};2-\beta;
 1-\frac{t^\rho}{2^\rho}) + \mu .
 {_2F_1}(1,1-\frac{1}{\rho};2-\beta;
 1-\frac{t^\rho}{2^\rho})\right).$$
where $A:=-\frac{\rho^{\beta-1}(2^\rho-t^\rho)^{1-\beta}2^{1-\rho}}
{(1-\beta)\Gamma(1-\beta)}=
-\frac{\rho^{\beta-1}(2^\rho-t^\rho)^{1-\beta}2^{1-\rho}}
{\Gamma(2-\beta)}\le 0$. Setting
$$g(\alpha_{\phi}):=A
\left(4\left((1-\gamma)\alpha_{\phi}+1\right).
 {_2F_1}(1,1-\frac{3}{\rho};2-\beta;
 1-\frac{t^\rho}{2^\rho}) + \gamma .
 {_2F_1}(1,1-\frac{1}{\rho};2-\beta;
 1-\frac{t^\rho}{2^\rho})\right),$$
one gets that $g(\alpha_{\phi})=a_2.\alpha_{\phi}+b_2$ is a linear function with
$$a_2=A.4(1-\gamma){_2F_1}(1,1-\frac{3}{\rho};2-\beta;
 1-\frac{t^\rho}{2^\rho})\le 0$$
for all $t\in [0,2]$, since
$${_2F_1}(1,1-\frac{3}{\rho};2-\beta;
 1-\frac{t^\rho}{2^\rho})=
 \frac{\Gamma(1)\Gamma(2-\beta)}{\Gamma(1-\beta)}\int_0^1 (1-u)^{-\beta}
 \left(1-\left(1-\frac{t^\rho}{2^\rho}\right)u\right)^{-
 \left(1-\frac{3}{\rho}\right)}du\ge 0$$
when $0<u<1, 1-u>0,0<1-\frac{t^\rho}{2^\rho}<1, 1-\left(1-\frac{t^\rho}{2^\rho}\right)u>0$.

Therefore, for each $t\in [0,2]$,
$$l^L(\gamma)=\min\limits_{\alpha_{\phi}}		\phi^{gr}(t,\gamma,\alpha_{\phi})=g(1)={\scriptsize A
{[}4(2-\gamma)
 {_2F_1}(1,1-\frac{3}{\rho};2-\beta;
 1-\frac{t^\rho}{2^\rho}) + \gamma
 {_2F_1}(1,1-\frac{1}{\rho};2-\beta;
 1-\frac{t^\rho}{2^\rho}){]}}$$
$$l^R(\gamma)=\max\limits_{\alpha_{\phi}}		\phi^{gr}(t,\gamma,\alpha_{\phi})=g(0)=A
{[}4{_2F_1}(1,1-\frac{3}{\rho};2-\beta;
 1-\frac{t^\rho}{2^\rho}) + \gamma
 {_2F_1}(1,1-\frac{1}{\rho};2-\beta;
 1-\frac{t^\rho}{2^\rho}){]}.$$
 We deduce from $\frac{d}{d\gamma}\left(l^R(\gamma)\right)\le 0$ that
 $$\sup\limits_{\gamma\ge \mu}\max\limits_{\alpha_{\phi}}		\phi^{gr}(t,\gamma,\alpha_{\phi})=l^R(\mu)=A{\scriptsize
\left(4{_2F_1}(1,1-\frac{3}{\rho};2-\beta;
 1-\frac{t^\rho}{2^\rho}) + \mu
 {_2F_1}(1,1-\frac{1}{\rho};2-\beta;
 1-\frac{t^\rho}{2^\rho})\right)}.$$
The sign of expression
$$\frac{d}{d\gamma}\left(l^L(\gamma)\right)
=A
\left(-4.
 {_2F_1}(1,1-\frac{3}{\rho};2-\beta;
 1-\frac{t^\rho}{2^\rho}) +
 {_2F_1}(1,1-\frac{1}{\rho};2-\beta;
 1-\frac{t^\rho}{2^\rho})\right)$$
 depends on $\rho$, hence, it is difficult to analyze the general case $\rho>0$.

 However, we could consider the sign of  $\frac{d}{d\gamma}\left(l^L(\gamma)\right)$ for any $\rho=\bar{\rho}>0$. For simplicity, we only consider 3 cases of $\rho =\bar{\rho}$: $\bar{\rho}=1,
 \bar{\rho}=2,\bar{\rho}=1/2$.

\noindent $*${\it Case 1:} $\bar{\rho}=1$. Then, $A=-\frac{(2-t)^{1-\beta}}
{\Gamma(2-\beta)}\le 0$ and, since $(-2)_n=0 \; (n\ge 3)$,
\begin{eqnarray*}
\frac{d}{d\gamma}\left(l^L(\gamma)\right)
&=&A\left(-4
 {_2F_1}(1,-2;2-\beta;
 1-\frac{t}{2}) +
 {_2F_1}(1,0;2-\beta;
 1-\frac{t}{2})\right)\\
&=&A\left(-4\sum\limits_{n=0}^2\frac{(1)_n(-2)_n}{(2-\beta)_n}.
\frac{1}{n!}\left(1-\frac{t}{2}\right)^n +
 1\right)\\
&=&A\left(-4\left(1+\frac{-2}{2-\beta}\left(1-\frac{t}{2}\right)
+\frac{2}{(2-\beta)(3-\beta)}\left(1-\frac{t}{2}\right)^2\right) +
 1\right)\\
&=&\frac{A}{(2-\beta)(3-\beta)}
(-2t^2+4(\beta-1)t-3\beta^2+7\beta-2)\\
&=&\frac{-2A}{(2-\beta)(3-\beta)}.
\left(t-\frac{4\beta-4+\sqrt{8(-\beta^2+3\beta)}}{4}\right)
\left(t-\frac{4\beta-4-\sqrt{8(-\beta^2+3\beta)}}{4}\right)\\
&=&\frac{-2A}{(2-\beta)(3-\beta)}.\left(t-t_1\right)
\left(t-t_2\right),\;\mbox{where}\; t_{1,2}=\frac{4\beta-4\pm\sqrt{8(-\beta^2+3\beta)}}{4}.
\end{eqnarray*}
Since $t_2\le 0$ for all $\beta\in (0,1)$, $t_1\le 0$ when $\beta\le \frac{1}{3}$ and $t_1\in (0,1)$ when $\frac{1}{3}<\beta<1$, one has
$$\inf\limits_{\gamma\ge \mu}\min\limits_{\alpha_{\phi}}		u^{gr}(t,\gamma,\alpha_{\phi})=\left\{\begin{array}{c}
l^L(\mu),\mbox{if}\; 0\le \beta\le \frac{1}{3}  \\
l^L(1), \mbox{if}\;  \frac{1}{3}<\beta<1\; \mbox{and}\; t\in [0,t_1]
 \\
l^L(\mu), \mbox{if}\;  \frac{1}{3}<\beta<1\; \mbox{and}\; t\in [t_1,2].
\end{array}
\right.$$
Hence, we get 3 cases of $\left[{_{gr}^C\mathcal{D}}^{\beta,1}_{2^-}
\widetilde{u}(t)\right]^\mu$ respectively,
which coincides with the results in Example 2 in \cite{NZ17}.

\noindent $*${\it Case 2:} $\bar{\rho}=2$. Then, $A=-\frac{2^{\beta-1}(4-t^2)^{1-\beta}2^{-1}}
{\Gamma(2-\beta)}\le 0$ and
\begin{eqnarray*}
\frac{d}{d\gamma}\left(l^L(\gamma)\right)
&=&A{\scriptsize
\left(-4
 {_2F_1}(1,-\frac{1}{2};2-\beta;
 1-\frac{t^2}{4}) +
 {_2F_1}(1,\frac{1}{2};2-\beta;
 1-\frac{t^2}{4})\right)}\\
&=&A{\scriptsize
\left(-4
 {_2F_1}(1,-\frac{1}{2};2-\beta;
 z) +
 {_2F_1}(1,\frac{1}{2};2-\beta;
 z)\right)}=A.p(z),
 \end{eqnarray*}
where $z=1-\frac{t^2}{4}\in [0,1]$ when $t\in [0,2]$.

Now, we consider the sign of $p(z)$ in $[0,1]$ and $\frac{d}{d\gamma}\left(l^L(\gamma)\right)=Ap(z)$ with $A\le 0$. Since
 $$p'(z)=\frac{1}{2-\beta}
 \left(2{_2F_1}(2,\frac{1}{2};3-\beta;
z) +\frac{1}{2}
 {_2F_1}(2,\frac{3}{2};3-\beta;
 z)\right)>0,$$
$p(z)$ is increasing on $[0,1]$. At $z=0$, one has
 $$p(0)=-4
 {_2F_1}(1,-\frac{1}{2};2-\beta;
0) +
 {_2F_1}(1,\frac{1}{2};2-\beta;
 0)=-4.1+1=-3<0.$$
At $z=1$, if  $c-b-a=2-\beta-1-(-\frac{1}{2})=\frac{1}{2}-\beta>0$, one has
\begin{eqnarray*}
p(1)&=&-4
 {_2F_1}(1,-5;2-\beta;
1) +
 {_2F_1}(1,-1;2-\beta;
 1)\\
 &=&-4
 \frac{2-\beta-1}
{2-\beta-(-1/2)-1} +
 \frac{2-\beta-1}
{2-\beta-1/2-1}\\
&=&
 \frac{(1-\beta)(3\beta-1/2)}
{(1/2-\beta)(3/2-\beta)}>0\Leftrightarrow \frac{1}{6}<\beta<\frac{1}{2}.
\end{eqnarray*}
Notice that when $c-b-a=\frac{1}{2}-\beta\le  0$, the function  ${_2F_1}(1,\frac{1}{2};2-\beta;
 1)\to +\infty$ by Remark \ref{rem2.7} (v), leading that $p(1)>0$ when $1/2<\beta<1$.

If $0 < \beta <\frac{1}{6}$,  $p(z)\le p(0) <0$ for all $z\in [0,1]$, leading that $\frac{d}{d\gamma}\left(l^L(\gamma)\right)>0$ and
  $$\inf\limits_{\gamma\ge \mu}\min\limits_{\alpha_{\phi}}		\phi^{gr}(t,\gamma,\alpha_{\phi})=l^L(\mu)=A
\left(4(2-\mu)
 {_2F_1}(1,-\frac{1}{2};2-\beta;
 1-\frac{t^2}{4}) + \mu
 {_2F_1}(1,\frac{1}{2};2-\beta;
 1-\frac{t^2}{4})\right).$$

If $\frac{1}{6} < \beta <1$, $p(0)=-3<0,p(1)>0$, then
 $p(z)=0 (\frac{d}{d\gamma}\left(l^L(\gamma)\right)=0)$  has an unique solution $z_0\in [0,1] (t_0\in [0,2]$, respectively). For example, with $\beta=3/4$, using

\noindent{$>>$pkg load gsl}
		
\noindent{$>>$ fsolve(@(t) -4*gsl$\_$sf$\_$
	hyperg$\_$2F1(1,-1/2,5/4,1-t.\textasciicircum 2/4)

\indent +
gsl$\_$sf$\_$
	hyperg$\_$2F1(1,1/2,5/4,1-t.\textasciicircum 2/4),1)}

\noindent in Octave, we get $t_0\approx {\bf 0.7404}$.

 Therefore, $\frac{d}{d\gamma}\left(l^L(\gamma)\right)<0$ when $t\in [0,t_0]$  and $\frac{d}{d\gamma}\left(l^L(\gamma)\right)>0$ when $t\in [t_0,2]$ , leading that
 $$\inf\limits_{\gamma\ge \mu}\min\limits_{\alpha_{\phi}}		\phi^{gr}(t,\gamma,\alpha_{\phi})=\left\{\begin{array}{c}
l^L(1), \mbox{if}\;   t\in [0,t_0]
 \\
l^L(\mu), \mbox{if}\;  t\in [t_0,2].
\end{array}
\right.$$
Hence, ${_{gr}^C\mathcal{D}}^{\beta,2}_{2^-}
\widetilde{\phi}(t)$ exists,  see Figure 2.1.4, and its $\mu$-level sets are
   $$\left[{_{gr}^C\mathcal{D}}^{\beta,2}_{2^-}
\widetilde{\phi}(t)\right]^\mu=\left\{\begin{array}{c}
{[}l^L(\mu),l^R(\mu){]},\mbox{if}\; 0\le \beta\le \frac{1}{6}  \\
{[}l^L(1),l^R(\mu){]}, \mbox{if}\;  \frac{1}{6}<\beta<1\; \mbox{and}\; t\in [0,t_0] \\
{[}l^L(\mu),l^R(\mu){]}, \mbox{if}\;  \frac{1}{6}<\beta<1\; \mbox{and}\; t\in [t_0,2].
\end{array}
\right.$$
\begin{figure}[htbp]
    \centering

    \begin{tabular}{cc}

         \includegraphics[width=0.45\textwidth]{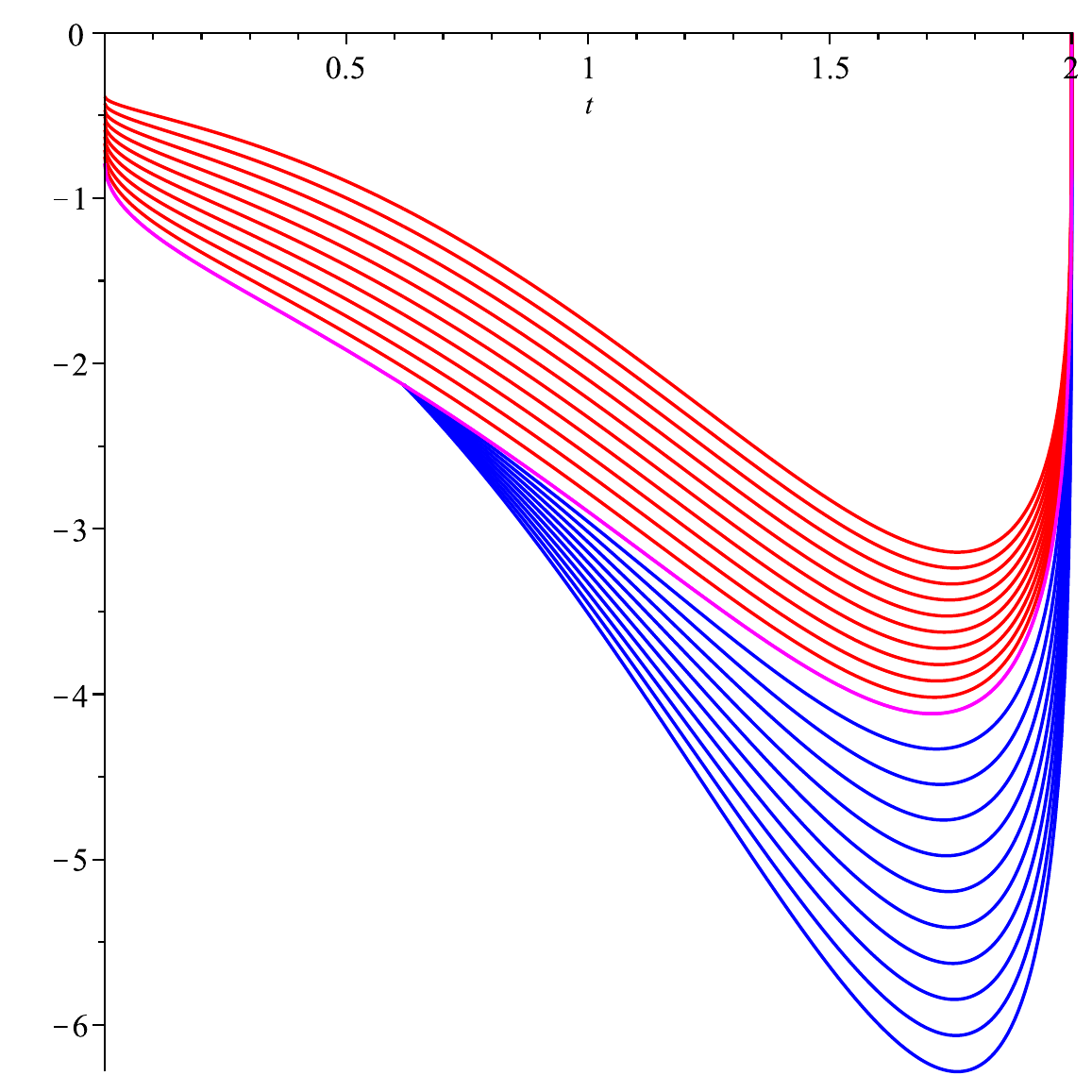} &
        \includegraphics[width=0.45\textwidth]{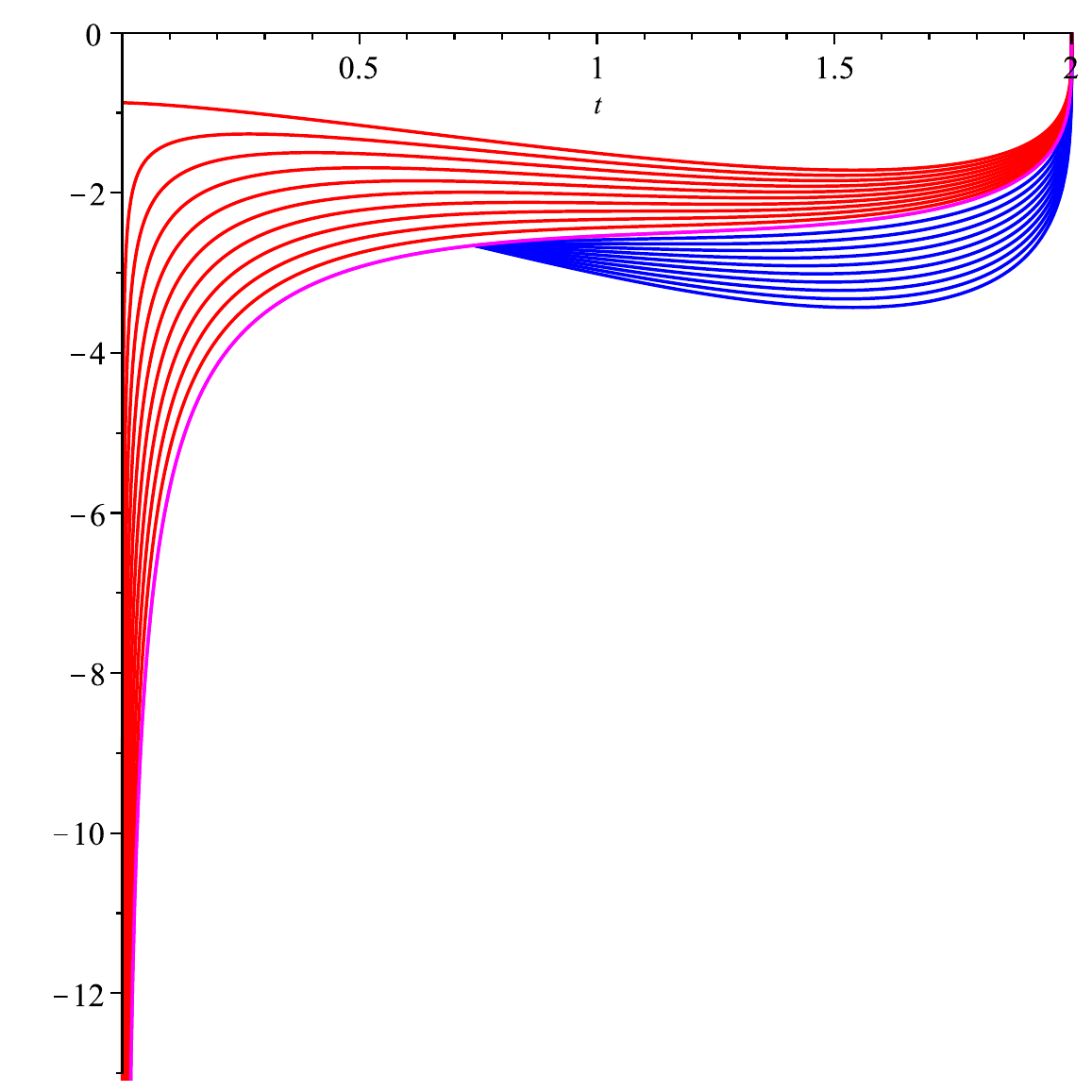} \\

        Figure 2.1.3. ${_{gr}^C\mathcal{D}}^{\frac{3}{4},\frac{1}{2}}_{2^-}
\widetilde{\phi}(t)$ &
        Figure 2.1.4. ${_{gr}^C\mathcal{D}}^{\frac{3}{4},2}_{2^-}
\widetilde{\phi}(t)$ \\
    \end{tabular}
\end{figure}

  \noindent $*${\it Case 3:} $\bar{\rho}=\frac{1}{2}$. Then, $A=-\frac{(1/2)^{\beta-1}(\sqrt{2}-\sqrt{t})^{1-\beta}
 2^{1/2}}
{\Gamma(2-\beta)}\le 0$ and
\begin{eqnarray*}
\frac{d}{d\gamma}\left(l^L(\gamma)\right)
&=&A
\left(-4
 {_2F_1}(1,-5;2-\beta;
 1-\sqrt{\frac{t}{2}}) +
 {_2F_1}(1,-1;2-\beta;
 1-\sqrt{\frac{t}{2}})\right)\\
 &=&A{\scriptsize
\left(-4
 {_2F_1}(1,-5;2-\beta;
 z) +
 {_2F_1}(1,-1;2-\beta;
 z)\right)}=A.q(z),
 \end{eqnarray*}
where $z=1-\sqrt{\frac{t}{2}}\in [0,1]$ when $t\in [0,2]$.
We also get that $q'(z)>0$ in $[0,1]$ and
 $$q(0)=-4
 {_2F_1}(1,-5;2-\beta;
0) +
 {_2F_1}(1,-1;2-\beta;
 0)=-4.1+1=-3<0,$$
\begin{eqnarray*}
q(1)&=&-4
 {_2F_1}(1,-5;2-\beta;
1) +
 {_2F_1}(1,-1;2-\beta;
 1)\\
 &=&-4
 \frac{2-\beta-1}
{2-\beta-(-5)-1} +
 \frac{2-\beta-1}
{2-\beta-(-1)-1}\\
&=&
 \frac{(1-\beta)(3\beta-2)}
{(2-\beta)(6-\beta)}>0\Leftrightarrow \frac{2}{3}<\beta<1,
\end{eqnarray*}
since $c-a-b=2-\beta-1-(-5)=6-\beta>0$ for all $\beta\in (0,1)$.

By the analysis similar to the {\it Case} 2, we get that
 ${_{gr}^C\mathcal{D}}^{\beta,\frac{1}{2}}_{2^-}
\widetilde{\phi}(t)$ exists and its $\mu$-level sets are
$$\left[{_{gr}^C\mathcal{D}}^{\beta,\frac{1}{2}}_{2^-}
\widetilde{\phi}(t)\right]^\mu=\left\{\begin{array}{c}
{[}l^L(\mu),l^R(\mu){]},\mbox{if}\; 0< \beta\le \frac{2}{3}  \\
{[}l^L(1),l^R(\mu){]}, \mbox{if}\;  \frac{2}{3}<\beta<1\; \mbox{and}\; t\in [0,t_1] \\
{[}l^L(\mu),l^R(\mu){]}, \mbox{if}\;  \frac{2}{3}<\beta<1\; \mbox{and}\; t\in [t_1,2],
\end{array}
\right.$$
where $t_1\in [0,2]$ is the unique solution of $\frac{d}{d\gamma}\left(l^L(\gamma)\right)=0$, see Figure 2.1.3.
\end{example}

\section{Optimality conditions for fuzzy variational problems}
	\label{sect3}
For simplicity, this section focuses on the case $n=2$; the proof extends naturally to the general case.

\begin{lemma}\label{lem2.2} (see e.g. \cite{GH96,H75}) Let $\Omega\subset \mathbb{R}^2$ be a region and
		let $f : \Omega \to \mathbb{R}$ and $h: \Omega \to \mathbb{R}$  be continuously differentiable  functions. Suppose that $f$ has a local extremum at ${\bf x}^*\in \Omega$ subject to the conditions that $h({\bf x}^*) = 0$ and  $\nabla h({\bf x}^{*})\neq {\bf 0}$. Then there are the numbers $\lambda\in\mathbb{R}$ such that
		$$\nabla f({\bf x}^*)+\lambda\nabla h({\bf x}^*)=0\Leftrightarrow
		\partial_if({\bf x}^*)+\lambda \partial_ih({\bf x}^*)=0, \forall i=1,2,$$
		where $\partial_i=\frac{\partial}{\partial x_i}(i=1,2)$.
	\end{lemma}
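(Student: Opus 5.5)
The statement is the classical Lagrange multiplier rule in two variables. I will prove it by reducing the constrained problem to an unconstrained one in a single variable via the implicit function theorem.

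First I fix coordinates. Since $\nabla h({\bf x}^*)\neq{\bf 0}$, at least one partial derivative is nonzero, and after relabelling I may assume $\partial_2 h({\bf x}^*)\neq 0$. Write ${\bf x}^*=(x_1^*,x_2^*)$. Because $h$ is continuously differentiable and $h({\bf x}^*)=0$, the implicit function theorem gives an open interval $J\ni x_1^*$ and a $C^1$ function $g:J\to\mathbb{R}$ with the following properties:
\begin{itemize}
\item $g(x_1^*)=x_2^*$;
\item $(x_1,g(x_1))\in\Omega$ and $h(x_1,g(x_1))=0$ for all $x_1\in J$;
\item near ${\bf x}^*$, the zero set of $h$ coincides with the graph of $g$;
\item $g'(x_1)=-\partial_1 h(x_1,g(x_1))/\partial_2 h(x_1,g(x_1))$.
\end{itemize}

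Next I reduce to one variable. Set $\varphi(x_1):=f(x_1,g(x_1))$ for $x_1\in J$. Because $f$ has a local extremum at ${\bf x}^*$ subject to $h=0$, and nearby constraint points are exactly the graph points $(x_1,g(x_1))$, the function $\varphi$ has a local extremum at the interior point $x_1^*$. By the chain rule $\varphi$ is differentiable, so Fermat's theorem gives
$$0=\varphi'(x_1^*)=\partial_1 f({\bf x}^*)+\partial_2 f({\bf x}^*)\,g'(x_1^*).$$

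Then I define the multiplier as
$$\lambda:=-\frac{\partial_2 f({\bf x}^*)}{\partial_2 h({\bf x}^*)}.$$
\begin{itemize}
\item For $i=2$, the identity $\partial_2 f({\bf x}^*)+\lambda\,\partial_2 h({\bf x}^*)=0$ holds directly from this definition.
\item For $i=1$, substitute $g'(x_1^*)=-\partial_1 h({\bf x}^*)/\partial_2 h({\bf x}^*)$ into the Fermat identity. This gives $\partial_1 f({\bf x}^*)-\partial_2 f({\bf x}^*)\,\partial_1 h({\bf x}^*)/\partial_2 h({\bf x}^*)=0$, which is exactly $\partial_1 f({\bf x}^*)+\lambda\,\partial_1 h({\bf x}^*)=0$.
\end{itemize}
Together the two components give $\nabla f({\bf x}^*)+\lambda\nabla h({\bf x}^*)=0$.

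The only delicate step is the reduction step. One must check that the local constrained extremum of $f$ transfers to a local extremum of $\varphi$. This relies on the local uniqueness part of the implicit function theorem, which guarantees that near ${\bf x}^*$ every constraint point lies on the graph of $g$. In the other direction, continuity of $g$ ensures that $(x_1,g(x_1))$ stays within the neighbourhood of ${\bf x}^*$ on which the constrained extremum holds. Everything else is the chain rule and algebra.
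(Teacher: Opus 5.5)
Your proof is correct. It is the standard implicit-function-theorem proof of the two-variable Lagrange multiplier rule; the paper gives no proof of this lemma and simply quotes it from the textbooks it cites. One small remark: transferring the constrained extremum to $\varphi$ uses only that $(x_1,g(x_1))$ satisfies $h=0$ and, by continuity of $g$, stays in the neighbourhood of ${\bf x}^*$ where the constrained extremum holds, so the local-uniqueness part of the implicit function theorem is not actually needed.
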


\begin{lemma}\label{lem2.1} (see e.g. \cite{GH96,H75}) Assume that $\phi(t)$ is a continuous real-valued function on $[a,b]$. If it holds
		$$\int_{a}^{b}\phi(t)\eta(t)dt=0,$$
		for any $\eta(t)\in C^1[a,b]$ such that $\eta(a)=\eta(b)=0$, then $\phi(t)=0$ for all $t\in [a,b]$.
	\end{lemma}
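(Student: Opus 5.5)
This is the fundamental lemma of the calculus of variations (Lemma \ref{lem2.1}). I would prove it by contradiction, testing the integral identity against a single smooth bump function concentrated where $\phi$ does not vanish.

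Suppose there is a point $t_0\in[a,b]$ with $\phi(t_0)\neq 0$; replacing $\phi$ by $-\phi$ if necessary, we may take $\phi(t_0)>0$. Because $\phi$ is continuous, $\phi(t_0)>0$ persists near interior points. So if $t_0$ is an endpoint, we may move it to a nearby interior point where $\phi$ is still positive, and therefore assume $t_0\in(a,b)$. Continuity then gives $\delta>0$ with $[t_0-\delta,t_0+\delta]\subset(a,b)$ and
$$\phi(t)\ge \tfrac12\phi(t_0)>0 \quad\text{on } [t_0-\delta,t_0+\delta].$$

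Next, build an admissible test function by setting
$$\eta(t)=(t-t_0+\delta)^2(t_0+\delta-t)^2 \quad\text{for } |t-t_0|\le\delta,$$
and $\eta(t)=0$ elsewhere on $[a,b]$. This function is a polynomial on the interval and vanishes together with its first derivative at $t_0\pm\delta$, so it lies in $C^1[a,b]$. It also satisfies $\eta(a)=\eta(b)=0$, is nonnegative everywhere, and is strictly positive on $(t_0-\delta,t_0+\delta)$.

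Substituting this $\eta$ into the hypothesis gives
$$0=\int_a^b\phi\eta\,dt=\int_{t_0-\delta}^{t_0+\delta}\phi\eta\,dt\ge\tfrac12\phi(t_0)\int_{t_0-\delta}^{t_0+\delta}\eta\,dt>0,$$
which is a contradiction. Hence $\phi\equiv0$ on $(a,b)$, and continuity extends this to the endpoints, giving $\phi\equiv 0$ on all of $[a,b]$.

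There is no real obstacle in this argument. The only step that needs care is confirming that the bump function is $C^1$ at the junction points $t_0\pm\delta$, and the squared factors take care of exactly that.
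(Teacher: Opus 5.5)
Your proof is correct and complete. It is the standard bump-function argument for the fundamental lemma of the calculus of variations, and the squared factors $(t-t_0+\delta)^2(t_0+\delta-t)^2$ are exactly what guarantees $\eta\in C^1[a,b]$. The paper gives no proof of its own and only cites \cite{GH96,H75}, where this is the usual argument. Your closing appeal to continuity at the endpoints is redundant, since you had already reduced the endpoint case to an interior point.
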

	\begin{lemma}\label{lem4.5} (see \cite{A17}) Let $\beta\in (0,1), \rho>0$ and $u,v\in C^1([a,b])$. Then,
		$$\int_a^bu(t).{^CD}^{\beta,\rho}_{a^+}v(t)dt
		=I_{b^-}^{1-\beta,\rho}u(t).v(t)\mid_{a}^b
		+\int_a^bv(t).{D}^{\beta,\rho}_{b^-}u(t)dt.$$
	\end{lemma}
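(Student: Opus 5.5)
The plan is to reduce the identity to Fubini's theorem followed by one classical integration by parts, using the representation in the Remark after Definition \ref{defn2.14}. That Remark writes ${^CD}^{\beta,\rho}_{a^+}v=I^{1-\beta,\rho}_{a^+}\bigl(\tau^{1-\rho}v'\bigr)$ and $D^{\beta,\rho}_{b^-}u=-t^{1-\rho}\frac{d}{dt}I^{1-\beta,\rho}_{b^-}u$.

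\emph{Step 1 (rewrite the left-hand side).} Substituting the first representation, the left-hand side becomes
$$\frac{\rho^{\beta}}{\Gamma(1-\beta)}\int_a^b u(t)\int_a^t \frac{\tau^{\rho-1}}{(t^\rho-\tau^\rho)^{\beta}}\,\tau^{1-\rho}v'(\tau)\,d\tau\,dt .$$

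\emph{Step 2 (swap the order of integration).} The integrand is dominated by $C\,(t^\rho-\tau^\rho)^{-\beta}$ with $C$ depending only on $\|u\|_\infty$ and $\|v'\|_\infty$. Since $\beta<1$, this kernel is integrable on the triangle $a\le\tau\le t\le b$, which one checks with the substitution $s=\tau^\rho/t^\rho$ as in the proof of Lemma \ref{lem2.15}. Fubini's theorem therefore applies and gives
$$\int_a^b v'(\tau)\,W(\tau)\,d\tau,\qquad W(\tau):=\frac{\rho^{\beta}}{\Gamma(1-\beta)}\,\tau^{1-\rho}\int_\tau^b \frac{\tau^{\rho-1}}{(t^\rho-\tau^\rho)^{\beta}}\,u(t)\,dt .$$
Here I will identify $W$ with $I^{1-\beta,\rho}_{b^-}u$, arranged so that the weight $\tau^{\rho-1}\tau^{1-\rho}$ collapses. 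This means placing the Katugampola weight on the integration variable exactly as in the convention of \cite{A17}: the weights $t^{\rho-1}$ and $\tau^{1-\rho}$ have to be redistributed between the kernel and the measure $t^{\rho-1}dt$.

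\emph{Step 3 (integrate by parts in $\tau$).} With $W=I^{1-\beta,\rho}_{b^-}u$, integration by parts gives
$$\int_a^b v'W\,d\tau=\bigl[W v\bigr]_a^b-\int_a^b v(\tau)\,W'(\tau)\,d\tau .$$
The boundary term is exactly $I^{1-\beta,\rho}_{b^-}u(t)\,v(t)\big|_a^b$. The remaining integral, after inserting the factor $-\tau^{1-\rho}$, is $\int_a^b v\,D^{\beta,\rho}_{b^-}u\,dt$ by part (i) of the same Remark. To justify differentiating $W$, note that $W$ is absolutely continuous when $u\in C^1[a,b]$. To see this, substitute $t^\rho=\tau^\rho+(b^\rho-\tau^\rho)s$, which moves the variable endpoint into the integrand. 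Then $W(\tau)$ equals $(b^\rho-\tau^\rho)^{1-\beta}$ times a $C^1$ function of $\tau$, so $W'$ exists on $[a,b)$ with at worst an integrable singularity at $b$.

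\emph{Main obstacle.} The real difficulty is the bookkeeping of the $\tau^{\rho-1}$ and $t^{1-\rho}$ weights, so that the Fubini output matches the paper's definition of $I^{1-\beta,\rho}_{b^-}$ and the derivative matches $D^{\beta,\rho}_{b^-}$ with the right sign. If the weights do not match directly, I would first apply Step 2 with the measure $t^{\rho-1}dt$, that is, pair $u$ against $t^{\rho-1}dt$ as \cite{A17} does, and then absorb the leftover factor into the definition of the derivative. A secondary point is that the boundary value $W(b)$ vanishes in the limit, which follows from the factor $(b^\rho-\tau^\rho)^{1-\beta}\to0$.
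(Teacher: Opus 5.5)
Your Fubini step is correct. The identification right after it fails for $\rho\neq 1$, and the problem is more than bookkeeping. After the swap, the factors $\tau^{\rho-1}\tau^{1-\rho}$ cancel in the \emph{outer} variable, which leaves $W(\tau)=\frac{\rho^{\beta}}{\Gamma(1-\beta)}\int_\tau^b (t^\rho-\tau^\rho)^{-\beta}u(t)\,dt$. The paper's $I^{1-\beta,\rho}_{b^-}u(\tau)$, however, carries the weight $t^{\rho-1}$ on the \emph{integration} variable. Hence $W=I^{1-\beta,\rho}_{b^-}\bigl(t^{1-\rho}u\bigr)$, not $I^{1-\beta,\rho}_{b^-}u$.

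Step 3 has a second, independent mismatch. By part (i) of the Remark following Definition~\ref{defn2.14}, $-W'(\tau)=\tau^{\rho-1}D^{\beta,\rho}_{b^-}\bigl(t^{1-\rho}u\bigr)(\tau)$, and nothing allows you to ``insert the factor $-\tau^{1-\rho}$''. Carried out correctly, your argument proves
\[
\int_a^b u\,{}^{C}D^{\beta,\rho}_{a^+}v\,dt=\Bigl[v\,I^{1-\beta,\rho}_{b^-}\bigl(t^{1-\rho}u\bigr)\Bigr]_a^b+\int_a^b v(t)\,t^{\rho-1}D^{\beta,\rho}_{b^-}\bigl(t^{1-\rho}u\bigr)(t)\,dt .
\]
This is the weighted integration-by-parts rule of \cite{A17}. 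The paper gives no proof of its own, only this citation, and its display drops both weights.

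Your fallback does not rescue the displayed statement either. Pairing $u$ with $t^{\rho-1}dt$ proves the displayed identity with $dt$ replaced by $t^{\rho-1}dt$ in \emph{both} integrals, which is a different statement. Absorbing leftover factors into the derivative changes the definition of $D^{\beta,\rho}_{b^-}$.

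No redistribution of weights can work, because the displayed identity is false for $\rho\neq 1$. Take $\rho=2$, $\beta=\frac{1}{2}$, $[a,b]=[0,1]$, $u\equiv 1$ and $v(t)=t^2/2$, both in $C^1[0,1]$.
\begin{itemize}
\item Left side: ${}^{C}D^{1/2,2}_{0^+}v(t)=\frac{\sqrt{2}}{\Gamma(1/2)}\int_0^t\frac{\tau\,d\tau}{\sqrt{t^2-\tau^2}}=\sqrt{2/\pi}\,t$, so the left side equals $1/\sqrt{2\pi}$.
\item Boundary term: $I^{1/2,2}_{1^-}1(t)=\frac{1}{\Gamma(3/2)}\bigl(\frac{1-t^2}{2}\bigr)^{1/2}$ vanishes at $t=1$, and $v(0)=0$, so the boundary term is zero.
\item Integral term: Lemma~\ref{lem4.4}(iii) with $\nu=0$ gives $D^{1/2,2}_{1^-}1(t)=\sqrt{2/\pi}\,(1-t^2)^{-1/2}$. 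Hence $\int_0^1 v\,D^{1/2,2}_{1^-}1\,dt=\frac{1}{\sqrt{2\pi}}\int_0^1\frac{t^2\,dt}{\sqrt{1-t^2}}=\frac{\pi}{4}\cdot\frac{1}{\sqrt{2\pi}}$.
\end{itemize}
So the right side is $\frac{\pi}{4}$ times the left side. The weighted identity above does balance here, with $1/\sqrt{2\pi}$ on both sides.

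Your Fubini-plus-integration-by-parts plan therefore proves the statement only when $\rho=1$, where the weights are trivial. For general $\rho$ the correct target is the weighted formula above.
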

	
\begin{lemma} \label{lem4.4} Let $\beta\in (0,1)$ and $\rho>0$.
		\begin{enumerate}
			\item[(i)] \cite{K11,K14} Let $\phi,\varphi \in X^p_c[a,b]$. Then,

$$I^{\beta,\rho}_{a^+}(\phi(t)+\varphi(t))=I^{\beta,\rho}_{a^+}\phi(t)+
			I^{\beta,\rho}_{a^+}\varphi(t)$$
$$D^{\beta,\rho}_{a^+}(\phi(t)+\varphi(t))=D^{\beta,\rho}_{a^+}\phi(t)+
			D^{\beta,\rho}_{a^+}\varphi(t),$$
and
$$D^{\beta,\rho}_{a^+}\left(I^{\beta,\rho}_{a^+}\phi(t)
\right)=\phi(t), D^{\beta,\rho}_{b^-}\left(I^{\beta,\rho}_{b^-}\phi(t)
\right)=\phi(t).$$

\item[(ii)] \cite{AMO16} If $\phi\in C[a,b]$, then
$^CD^{\beta,\rho}_{a^+}\left(I^{\beta,\rho}_{a^+}\phi(t)
\right)=\phi(t)$. If $\phi\in C^1[a,b]$, then $$I^{\beta,\rho}_{a^+}\left({^CD}^{\beta,\rho}_{a^+}\phi(t)
\right)=\phi(t)-\phi(a).$$

\item[(iii)] \cite{JAB17}  $$I^{\beta,\rho}_{b^-}
    \left(\frac{b^{\rho}-t^{\rho}}{\rho}\right)^\nu		=\frac{\Gamma(\nu+1)}{\Gamma(\nu+1+\beta)}
    \left(\frac{b^{\rho}-t^{\rho}}{\rho}\right)^{\nu+\beta},$$
$$			D^{\beta,\rho}_{b^-}\left(\frac{b^{\rho}-t^{\rho}}{\rho}\right)^\nu			=\frac{\Gamma(\nu+1)}{\Gamma(\nu+1-\beta)}
\left(\frac{b^{\rho}-t^{\rho}}{\rho}\right)^{\nu-\beta}.
			$$
			In particular, if $\nu=0$ then $$I^{\beta,\rho}_{b^-}1
			=\frac{1}{\Gamma(1+\beta)}
\left(\frac{b^{\rho}-t^{\rho}}{\rho}\right)^{\beta},
D^{\beta,\rho}_{b^-}1
			=\frac{1}{\Gamma(1-\beta)}
\left(\frac{b^{\rho}-t^{\rho}}{\rho}\right)^{-\beta}.$$
		
		\end{enumerate}
	\end{lemma}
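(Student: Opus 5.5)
The plan is to reduce every statement of Lemma \ref{lem4.4} to the classical Riemann--Liouville and Caputo calculus by a change of variable. Set $x=t^{\rho}/\rho$ and $s=\tau^{\rho}/\rho$. Then $ds=\tau^{\rho-1}d\tau$ and
$$\rho^{1-\beta}(t^{\rho}-\tau^{\rho})^{\beta-1}\tau^{\rho-1}d\tau=\left(\frac{t^{\rho}-\tau^{\rho}}{\rho}\right)^{\beta-1}d\!\left(\frac{\tau^{\rho}}{\rho}\right)=(x-s)^{\beta-1}ds .$$
Hence, writing $\hat\phi(s):=\phi((\rho s)^{1/\rho})$, we get $I^{\beta,\rho}_{a^+}\phi(t)=I^{\beta}_{a^\rho/\rho^+}\hat\phi(x)$, the classical Riemann--Liouville integral in the variable $x$, and similarly for the right-sided integral. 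Moreover $t^{1-\rho}\frac{d}{dt}=\frac{d}{dx}$. With Remark (i) after Definition \ref{defn2.14}, this gives $D^{\beta,\rho}_{a^+}\phi(t)=D^{\beta}\hat\phi(x)$. With Remark (ii), the Caputo--Katugampola derivative becomes the classical Caputo derivative of $\hat\phi$. I use the Remark form rather than the displayed formula in Definition \ref{defn2.14}(iii), which is the one the $\rho$-scaling is consistent with. The map $\phi\mapsto\hat\phi$ is a diffeomorphism of $[a,b]$ onto $[a^\rho/\rho,b^\rho/\rho]$ for $a>0$, so it preserves continuity, $C^1$, and (with the weight $t^c$) the $X^p_c$ integrability.

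For (i), linearity is immediate from linearity of the integral and of $\frac{d}{dt}$. For $D^{\beta,\rho}_{a^+}I^{\beta,\rho}_{a^+}\phi=\phi$, I first verify the semigroup law $I^{1-\beta,\rho}_{a^+}I^{\beta,\rho}_{a^+}\phi=I^{1,\rho}_{a^+}\phi$. This follows from Fubini together with the inner integral
$$\int_\sigma^x(x-s)^{-\beta}(s-\sigma)^{\beta-1}ds=B(1-\beta,\beta)=\Gamma(\beta)\Gamma(1-\beta).$$
Then $t^{1-\rho}\frac{d}{dt}\int_a^t\tau^{\rho-1}\phi(\tau)d\tau=\phi(t)$ a.e. by Lebesgue differentiation, and everywhere if $\phi$ is continuous. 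The right-sided identity is the same computation with the orientation reversed, and the minus sign in $D^{\beta,\rho}_{b^-}$ compensates for $\frac{d}{dt}\int_t^b=-(\cdot)(t)$.

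For (ii), apply the Caputo form. If $\phi\in C[a,b]$ and $\psi=I^{\beta,\rho}_{a^+}\phi$, then $\psi(a)=0$, and the Caputo derivative equals the KRL derivative minus the term coming from $\psi(a)$, so (i) gives $\phi$. For the second identity, use ${^CD}^{\beta,\rho}_{a^+}\phi=I^{1-\beta,\rho}_{a^+}(\tau^{1-\rho}\phi')$ and the semigroup law:
$$I^{\beta,\rho}_{a^+}\,{^CD}^{\beta,\rho}_{a^+}\phi=I^{1,\rho}_{a^+}(\tau^{1-\rho}\phi')=\int_a^t\phi'(\tau)d\tau=\phi(t)-\phi(a).$$

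For (iii), compute $I^{\beta,\rho}_{b^-}$ of $\left(\frac{b^\rho-\tau^\rho}{\rho}\right)^{\nu}$ directly. Substitute $\tau^\rho=t^\rho+(b^\rho-t^\rho)u$, so that $\tau^{\rho-1}d\tau=\frac{b^\rho-t^\rho}{\rho}du$. This produces
$$\frac{1}{\Gamma(\beta)}\left(\frac{b^\rho-t^\rho}{\rho}\right)^{\nu+\beta}\int_0^1u^{\beta-1}(1-u)^{\nu}du=\frac{\Gamma(\nu+1)}{\Gamma(\nu+1+\beta)}\left(\frac{b^\rho-t^\rho}{\rho}\right)^{\nu+\beta}.$$
For the derivative, apply this with order $1-\beta$ to get exponent $\nu+1-\beta$, then apply $-t^{1-\rho}\frac{d}{dt}$. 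Using $\frac{d}{dt}\frac{b^\rho-t^\rho}{\rho}=-t^{\rho-1}$, the prefactor becomes $\frac{\Gamma(\nu+1)(\nu+1-\beta)}{\Gamma(\nu+2-\beta)}=\frac{\Gamma(\nu+1)}{\Gamma(\nu+1-\beta)}$. The case $\nu=0$ is then a specialization.

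The main obstacle is regularity rather than algebra. Fubini in the semigroup step needs $\phi\in X^p_c$ to give an integrable kernel product, and the identity $D\circ I=\phi$ then holds only almost everywhere for general $X^p_c$ functions. I would state it a.e. and pointwise for continuous $\phi$. One also needs $a>0$ (or $\rho\ge1$ near $0$) for the change of variable to be a diffeomorphism, and $\nu>-1$ in (iii) for the Beta integral to converge.
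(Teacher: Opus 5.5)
The paper gives no proof of this lemma: each item is quoted from the literature (Katugampola for (i), Almeida--Malinowska--Odzijewicz for (ii), Jarad--Abdeljawad--Baleanu for (iii)). Your proposal is a genuinely different, self-contained route, and it is correct.

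The substitution $x=t^{\rho}/\rho$ turns every Katugampola-type operator into the classical Riemann--Liouville or Caputo operator acting on $\hat\phi$. One Fubini--Beta computation then gives the semigroup law, and (i) and the second identity of (ii) follow from it. Your Beta integral in (iii) checks out, and so does the prefactor $\Gamma(\nu+1)(\nu+1-\beta)/\Gamma(\nu+2-\beta)=\Gamma(\nu+1)/\Gamma(\nu+1-\beta)$. It even gives $0$ at $\nu=\beta-1$, consistent with Lemma~\ref{lem4.6}(i).

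Your route gives a uniform proof that shows the hypotheses actually used: $\nu>-1$, integrability of $\tau^{\rho-1}\phi$ for Fubini, and a.e.\ versus pointwise equality. The citations instead give the exact function-space framework of the sources without redoing the analysis.

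Three points to tighten.

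First, Definition~\ref{defn2.14}(iii) and the Remark do not conflict. Expanding $I^{1-\beta,\rho}_{a^+}(\tau^{1-\rho}\phi')$ gives $\frac{\rho^{\beta}}{\Gamma(1-\beta)}\int_a^t (t^{\rho}-\tau^{\rho})^{-\beta}\phi'(\tau)\,d\tau$, since $\tau^{\rho-1}\tau^{1-\rho}=1$. The displayed formula differs only by a sign typo in the base.

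Second, in the first claim of (ii) you must say which definition of the Caputo--Katugampola derivative you use. For merely continuous $\phi$, the function $\psi=I^{\beta,\rho}_{a^+}\phi$ need not be absolutely continuous, so the integral formula need not make sense. Your step ``Caputo equals KRL minus the $\psi(a)$ term'' is exactly the definition ${}^CD^{\beta,\rho}_{a^+}\psi:=D^{\beta,\rho}_{a^+}(\psi-\psi(a))$ used in the cited source. For absolutely continuous $\psi$ it is a theorem: apply $t^{1-\rho}\frac{d}{dt}$ to $I^{1-\beta,\rho}_{a^+}(\psi-\psi(a))=I^{1,\rho}_{a^+}I^{1-\beta,\rho}_{a^+}(\tau^{1-\rho}\psi')$, which holds by the same Fubini argument.

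Third, you do not need $a>0$, and ``or $\rho\ge 1$'' is not the right condition anyway: for $\rho>1$ the map $t\mapsto t^{\rho}/\rho$ is not a diffeomorphism at $0$. The substitution is used only inside integrals, where it is a monotone absolutely continuous bijection $[0,b]\to[0,b^{\rho}/\rho]$ with $\hat\phi'(s)\,ds=\phi'(\tau)\,d\tau$. So $\phi\in C^1[0,b]$ already gives $\hat\phi$ absolutely continuous with $\hat\phi'\in L^1$, which is all your argument for (ii) needs; (iii) is an explicit computation valid for every $t\in[0,b)$. This matters because the paper applies the lemma with $a=0$ throughout.
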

 By the proof similar to Theorem 5, p. 22 in \cite{MDM18}, we have
			$$I^{\beta,\rho}_{b^-}(C_1\phi(t)+C_2\varphi(t))=
C_1I^{\beta,\rho}_{b^-}\phi(t)+
			C_2I^{\beta}_{b^-}\varphi(t),$$
			$$D^{\beta,\rho}_{b^-}(C_1\phi(t)+C_2\varphi(t))
=C_1D^{\beta,\rho}_{b^-}\phi(t)+
			C_2D^{\beta,\rho}_{b^-}\varphi(t).$$	
\begin{lemma}\cite{KST06}  Let $\beta\in (0,1)$ and $\phi \in C[a,b]$. The solution of the equation
			${^CD}^{\beta}_{a^+}\phi(t)=\phi(t),\phi(a)=C_1 (C_1\in\mathbb{R})$
			has the form
$$\phi(t)=C_1+
			\frac{1}{\Gamma(\beta)}\int_a^t
			\left(t-\tau\right)^{\beta-1}\phi(\tau)d\tau.$$
\end{lemma}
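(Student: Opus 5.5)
Read literally, the statement uses the same letter $\phi$ for the unknown and for the right-hand side. I will read it in the standard way from \cite{KST06}: for a given $\phi\in C[a,b]$, the problem ${^CD}^{\beta}_{a^+}y(t)=\phi(t)$, $y(a)=C_1$, has the unique solution
$$y(t)=C_1+I^{\beta}_{a^+}\phi(t)=C_1+\frac{1}{\Gamma(\beta)}\int_a^t(t-\tau)^{\beta-1}\phi(\tau)d\tau .$$
If one insists on the literal reading with the same $\phi$ on both sides, the displayed formula is then the equivalent Volterra integral equation $\phi=C_1+I^\beta_{a^+}\phi$. The same two steps below prove that equivalence. Throughout I work with $\rho=1$, so that Lemma \ref{lem4.4} specializes to the classical Riemann--Liouville/Caputo operators.

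\emph{Necessity.} Suppose $y$ is a solution in the class where the Caputo derivative is defined, namely $y\in C^1[a,b]$. I apply $I^{\beta}_{a^+}=I^{\beta,1}_{a^+}$ to both sides of ${^CD}^{\beta}_{a^+}y=\phi$. By the second identity in Lemma \ref{lem4.4} (ii) with $\rho=1$, the left side becomes $I^{\beta}_{a^+}\big({^CD}^{\beta}_{a^+}y\big)(t)=y(t)-y(a)$. Inserting $y(a)=C_1$ gives $y(t)=C_1+I^\beta_{a^+}\phi(t)$. This is exactly the claimed representation, and it also yields uniqueness, since any two solutions equal the same right-hand side.

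\emph{Sufficiency.} Let $y:=C_1+I^\beta_{a^+}\phi$. First, $y(a)=C_1$, because $|I^\beta_{a^+}\phi(t)|\le \|\phi\|_\infty (t-a)^\beta/\Gamma(\beta+1)\to0$ as $t\to a^+$. Second, I use linearity of ${^CD}^\beta_{a^+}$ together with two facts. The Caputo derivative of a constant vanishes, since its integrand contains the derivative of the constant, which is zero. By the first identity of Lemma \ref{lem4.4} (ii), ${^CD}^\beta_{a^+}I^\beta_{a^+}\phi=\phi$ for $\phi\in C[a,b]$. Together these give ${^CD}^\beta_{a^+}y=\phi$.

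The main obstacle is regularity. Lemma \ref{lem4.4} (ii) requires $y\in C^1[a,b]$ in the necessity step, yet $C_1+I^\beta_{a^+}\phi$ is in general only Hölder continuous near $a$ (for instance, $\phi\equiv1$ gives $(t-a)^\beta$). I would handle this by working in the class where ${^CD}^\beta_{a^+}y=D^\beta_{a^+}(y-y(a))$ makes sense, for example $y\in C[a,b]$ with $I^{1-\beta}_{a^+}(y-y(a))\in C^1$. The composition rule $I^\beta D^\beta$ for Riemann--Liouville operators from Lemma \ref{lem4.4} (i) then replaces the $C^1$ hypothesis. Alternatively, I would prove the identity first for $C^1$ data and pass to the limit using continuity of $I^\beta_{a^+}$ on $C[a,b]$.
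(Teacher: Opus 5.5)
Your argument is correct and is essentially the paper's: the paper gives no separate proof of this cited lemma, but its proof of the $\rho$-version, Lemma~\ref{lem4.6}~(iii), is exactly your necessity step (apply $I^{\beta,\rho}_{a^+}$, use $I^{\beta,\rho}_{a^+}\,{^CD}^{\beta,\rho}_{a^+}\phi=\phi-\phi(a)$ from Lemma~\ref{lem4.4}~(ii), then insert $\phi(a)=C_1$), and your sufficiency direction and regularity caveat go beyond what the paper does. One citation in that extra material needs fixing: Lemma~\ref{lem4.4}~(i) gives only $D^{\beta}_{a^+}I^{\beta}_{a^+}\phi=\phi$, so for the composition $I^\beta_{a^+}D^\beta_{a^+}$ you should instead cite the standard rule $I^\beta_{a^+}D^\beta_{a^+}g=g-\frac{(I^{1-\beta}_{a^+}g)(a)}{\Gamma(\beta)}(t-a)^{\beta-1}$ from \cite{KST06}, whose boundary term vanishes for $g=y-y(a)$ because $g$ is continuous with $g(a)=0$.
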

	\begin{lemma} \label{lem4.6} Let $\beta\in (0,1)$, $\rho>0$ and $\phi \in C[a,b]$.
		\begin{enumerate}
			\item[(i)]  $D^{\beta,\rho}_{b^-}\phi(t)=0$ if and only if $\phi(t)=C\left(\frac{b^{\rho}-t^{\rho}}{\rho}\right)^{\beta-1}$ with any $C\in\mathbb{R}$.
\item[(ii)]  $D^{\beta,\rho}_{b^-}\phi(t)=1$ if and only if $\phi(t)=\frac{1}{\Gamma(1+\beta)}
    \left(\frac{b^{\rho}-t^{\rho}}{\rho}\right)^{\beta}
    +C\left(\frac{b^{\rho}-t^{\rho}}{\rho}\right)^{\beta-1}$ with any $C\in\mathbb{R}$.	
			\item[(iii)]  The solution of the equation
			$${^CD}^{\beta,\rho}_{a^+}\phi(t)=\phi(t),\phi(a)=C_1 (C_1\in\mathbb{R})$$
			has the form
$$\phi(t)=C_1+
			\frac{1}{\Gamma(\beta)}\int_a^t
			\left(\frac{t^{\rho}-
\tau^{\rho}}{\rho}\right)^{\beta-1}\tau^{\rho-1}\phi(\tau)d\tau.$$
		
		\end{enumerate}
	\end{lemma}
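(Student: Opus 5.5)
The plan is to read all three parts through the composition rules of Lemma \ref{lem4.4}, applied to the right KRL operators, and to use the power-function formulas of Lemma \ref{lem4.4} (iii) to produce explicit solutions. Throughout, write $w(t):=\frac{b^{\rho}-t^{\rho}}{\rho}$.

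\emph{Part (i).} For sufficiency, put $\nu=\beta-1$ in Lemma \ref{lem4.4} (iii). This gives
$$D^{\beta,\rho}_{b^-}w^{\beta-1}=\frac{\Gamma(\beta)}{\Gamma(0)}\,w^{-1}=0,$$
since $1/\Gamma(0)=0$. For a cleaner version, I would use Remark (i) after Definition \ref{defn2.14}. It reduces the statement to showing that $I^{1-\beta,\rho}_{b^-}w^{\beta-1}$ is constant, and Lemma \ref{lem4.4} (iii) with $\nu=\beta-1$ and order $1-\beta$ gives exactly $\Gamma(\beta)w^{0}=\Gamma(\beta)$. Linearity then covers $Cw^{\beta-1}$.

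For necessity, suppose $D^{\beta,\rho}_{b^-}\phi=0$. Remark (i) gives $-t^{1-\rho}\frac{d}{dt}I^{1-\beta,\rho}_{b^-}\phi(t)=0$, so $I^{1-\beta,\rho}_{b^-}\phi\equiv K$ is constant. Now apply $D^{1-\beta,\rho}_{b^-}$ to both sides and use the inversion identity $D^{1-\beta,\rho}_{b^-}I^{1-\beta,\rho}_{b^-}\phi=\phi$ from Lemma \ref{lem4.4} (i). This yields
$$\phi=K\,D^{1-\beta,\rho}_{b^-}1=\frac{K}{\Gamma(\beta)}\,w^{\beta-1},$$
using the $\nu=0$ case of Lemma \ref{lem4.4} (iii) with order $1-\beta$. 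Setting $C=K/\Gamma(\beta)$ finishes part (i).

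\emph{Main obstacle.} The identity $D I\phi=\phi$ is stated for $\phi\in X^p_c$. The singular function $w^{\beta-1}$ is integrable near $b$ (since $\beta-1>-1$) but is not continuous at $b$. So the hypothesis $\phi\in C[a,b]$ must be read loosely, as membership in $X^p_c$ or continuity on $[a,b)$, and I would remark on this.

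\emph{Part (ii).} Lemma \ref{lem4.4} (iii) with $\nu=\beta$ gives $D^{\beta,\rho}_{b^-}w^{\beta}=\Gamma(\beta+1)/\Gamma(1)$. So $\phi_0=w^{\beta}/\Gamma(1+\beta)$ is a particular solution. By linearity of $D^{\beta,\rho}_{b^-}$, $\phi$ solves $D^{\beta,\rho}_{b^-}\phi=1$ if and only if $\phi-\phi_0$ solves the homogeneous equation, and part (i) then gives the stated family.

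\emph{Part (iii).} Apply $I^{\beta,\rho}_{a^+}$ to ${^CD}^{\beta,\rho}_{a^+}\phi=\phi$ and use Lemma \ref{lem4.4} (ii) to get $\phi(t)-\phi(a)=I^{\beta,\rho}_{a^+}\phi(t)$. Then expand the KRL integral using Definition \ref{defn2.14} (i). The rewriting
$$\rho^{1-\beta}\,(t^{\rho}-\tau^{\rho})^{\beta-1}=\left(\frac{t^{\rho}-\tau^{\rho}}{\rho}\right)^{\beta-1}$$
gives the stated form.

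For the converse, suppose $\phi$ satisfies the integral equation with $\phi(a)=C_1$. Apply ${^CD}^{\beta,\rho}_{a^+}$, noting that the derivative kills constants, and use ${^CD}^{\beta,\rho}_{a^+}I^{\beta,\rho}_{a^+}\phi=\phi$ from Lemma \ref{lem4.4} (ii). This recovers the equation. The regularity needed for the converse ($\phi\in C^1$ to apply the Caputo derivative) is the only delicate point, and I would assume it as the paper implicitly does.
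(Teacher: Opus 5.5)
Your proof is correct. For the necessity half of (i) and for (iii) it coincides with the paper's argument. There too one shows that $I^{1-\beta,\rho}_{b^-}\phi$ is constant and applies $D^{1-\beta,\rho}_{b^-}$ using $D^{1-\beta,\rho}_{b^-}1=\frac{1}{\Gamma(\beta)}w^{\beta-1}$. For (iii) the paper likewise applies $I^{\beta,\rho}_{a^+}$ together with Lemma~\ref{lem4.4}~(ii). Your converse in (iii) is extra, since the paper only shows that every solution has the stated form.

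You differ in two places.
\begin{itemize}
\item For sufficiency in (i), the paper does not use the power rule at $\nu=\beta-1$. It substitutes $u=w(\tau)$ and then $v=u/w(t)$ to show directly that $\int_t^b\tau^{\rho-1}(\tau^\rho-t^\rho)^{-\beta}w(\tau)^{\beta-1}\,d\tau=\rho^{-\beta}B(\beta,1-\beta)$. This is the same fact as your $I^{1-\beta,\rho}_{b^-}w^{\beta-1}=\Gamma(\beta)$, but it is self-contained. Yours relies on Lemma~\ref{lem4.4}~(iii) for a negative exponent. That is legitimate since $\nu>-1$, though the paper never states the admissible range of $\nu$.
\item For necessity in (ii), the paper integrates the equation and fixes the constant by setting $t=b$ and taking the integral to vanish there. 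It obtains $I^{1-\beta,\rho}_{b^-}\phi=w$, hence $\phi=w^{\beta}/\Gamma(1+\beta)$, and only then appends $Cw^{\beta-1}$ ``by (i)''. That normalization actually excludes the homogeneous term: $I^{1-\beta,\rho}_{b^-}w^{\beta-1}\equiv\Gamma(\beta)$ on $[a,b)$, so it does not tend to $0$ as $t\to b^-$. Your superposition argument (a particular solution plus the kernel of a linear operator) avoids this inconsistency and is the cleaner route.
\end{itemize}

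Your regularity caveat is also well taken. For $C\neq0$ the function $Cw^{\beta-1}$ is unbounded at $t=b$, so under the literal hypothesis $\phi\in C[a,b]$ parts (i)--(ii) would force $C=0$. The paper tacitly needs a larger integrable class, since it uses $C\neq0$ in Example~\ref{exa3.1}.
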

\begin{proof} (i)  ($\Leftarrow$)
$D^{\beta,\rho}_{b^-}\phi(t)
=C.D^{\beta,\rho}_{b^-}
\left(\frac{b^{\rho}-t^{\rho}}{\rho}\right)^{\beta-1}$
$$=-C.\frac{\rho^{\beta}}{\Gamma(1-\beta)}
t^{1-\rho}				.\frac{d}{dt}\left(\int_t^b\frac{\tau^{\rho-1}}{(\tau^{\rho}-t^{\rho}
)^{\beta}}
\left(\frac{b^{\rho}-\tau^{\rho}}{\rho}\right)^{\beta-1}d\tau\right).$$
Setting $u=\frac{b^{\rho}-\tau^{\rho}}{\rho}$, one has $du=-\tau^{\rho-1}d\tau$, $\tau^{\rho}-t^{\rho}
=b^{\rho}-\rho.u-t^{\rho}
=\rho\left(\frac{b^{\rho}-t^{\rho}}{\rho}-u\right)$
and $\tau=t\Rightarrow u=\frac{b^{\rho}-t^{\rho}}{\rho}$,
$\tau=b\Rightarrow u=0.$ Hence,
$$D^{\beta,\rho}_{b^-}\phi(t)
=-\frac{C\rho^{\beta}}{\Gamma(1-\beta)}
\frac{d}{dt}\left(\int_{\frac{b^{\rho}-t^{\rho}}{\rho}}^0
\frac{1}{\left(\rho\left(\frac{b^{\rho}-t^{\rho}}
{\rho}-u\right)\right)^{\beta}}
u^{\beta-1}(-du)\right)$$
$$=-\frac{C}{\Gamma(1-\beta)}
\frac{d}{dt}\left(\int_0^{\frac{b^{\rho}-t^{\rho}}{\rho}}
\left(\frac{b^{\rho}-t^{\rho}}{\rho}-u\right)^{-\beta}
u^{\beta-1}du\right) \;\left(\mbox{setting}\; v=\frac{u}{\frac{b^{\rho}-t^{\rho}}{\rho}} \right)$$
$$=-\frac{C}{\Gamma(1-\beta)}
\frac{d}{dt}\left(\int_0^1
\left(\frac{b^{\rho}-t^{\rho}}{\rho}-
\frac{b^{\rho}-t^{\rho}}{\rho}v\right)^{-\beta}
\left(\frac{b^{\rho}-t^{\rho}}{\rho}v\right)^{\beta-1}
.\frac{b^{\rho}-t^{\rho}}{\rho}dv\right)$$
$$=-\frac{C}{\Gamma(1-\beta)}
\frac{d}{dt}\left(\int_0^1
(1-v)^{-\beta}v^{\beta-1}dv\right)
=-\frac{C}{\Gamma(1-\beta)}\frac{d}{dt}\left(B(\beta,1-\beta)\right)=0.$$

\noindent($\Rightarrow$) We deduce from $D^{\beta,\rho}_{b^-}\phi(t)=0$ that
$$-\frac{\rho^{\beta}}{\Gamma(1-\beta)}
t^{1-\rho}				.\frac{d}{dt}\left(\int_t^b\frac{\tau^{\rho-1}}{(\tau^{\rho}-t^{\rho}
)^{\beta}}
\phi(\tau)d\tau\right)=0,$$
which together with $\frac{\rho^{\beta}}{\Gamma(1-\beta)}
t^{1-\rho}\neq 0$ implies that
$$\int_t^b\frac{\tau^{\rho-1}}{(\tau^{\rho}-t^{\rho}
)^{\beta}}
\phi(\tau)d\tau=C.$$
Equivalently,
$$\frac{\rho^{1-(1-\beta)}}{\Gamma (1-\beta)}\int_t^b\frac{\tau^{\rho-1}}{(\tau^{\rho}-t^{\rho}
)^{1-(1-\beta)}}
\phi(\tau)d\tau
=\frac{\rho^{1-(1-\beta)}}{\Gamma (1-\beta)}C\Leftrightarrow I^{1-\beta,\rho}_{b^-}\phi(t) =
\frac{\rho^{\beta}}{\Gamma (1-\beta)}C.$$
By applying $D^{1-\beta,\rho}_{b^-}$ for both sides of the above equation, we get that
$$\phi(t)=D^{1-\beta,\rho}_{b^-}(\frac{\rho^{\beta}}{\Gamma (1-\beta)}C)=\frac{C\rho^{\beta}}{\Gamma (1-\beta)}D^{1-\beta,\rho}_{b^-}(1)$$
$$=
\frac{C\rho^{\beta}}{\Gamma (1-\beta)}.\frac{1}{\Gamma(1-(1-\beta))}
\left(\frac{b^{\rho}-t^{\rho}}{\rho}\right)^{-(1-\beta)}$$
$$=C_1
\left(\frac{b^{\rho}-t^{\rho}}{\rho}\right)^{\beta-1}, \;\mbox{where}\; C_1=\frac{C\rho^{\beta}}{\Gamma (1-\beta)}.\frac{1}{\Gamma(\beta)}.$$

\noindent(ii)  ($\Leftarrow$)
$D^{\beta,\rho}_{b^-}\phi(t)
=\frac{1}{\Gamma(1+\beta)}.
    D^{\beta,\rho}_{b^-}
    \left(\frac{b^{\rho}-t^{\rho}}{\rho}\right)^{\beta}
    +C.D^{\beta,\rho}_{b^-}
    \left(\frac{b^{\rho}-t^{\rho}}{\rho}\right)^{\beta-1}$
$$=\frac{1}{\Gamma(1+\beta)}.\frac{\Gamma(\beta+1)}{\Gamma(\beta+1-\beta)}
\left(\frac{b^{\rho}-t^{\rho}}{\rho}\right)^{\beta-\beta}+0=1.$$

\noindent($\Rightarrow$) We deduce from $D^{\beta,\rho}_{b^-}\phi(t)=1$ that
$$-\frac{\rho^{\beta}}{\Gamma(1-\beta)}
t^{1-\rho}				.\frac{d}{dt}\left(\int_t^b\frac{\tau^{\rho-1}}{(\tau^{\rho}-t^{\rho}
)^{\beta}}
\phi(\tau)d\tau\right)=1,$$
which together with $\frac{\rho^{\beta}}{\Gamma(1-\beta)}
t^{1-\rho}\neq 0$ implies that
$$\int_t^b\frac{\tau^{\rho-1}}{(\tau^{\rho}-t^{\rho}
)^{\beta}}
\phi(\tau)d\tau=-\frac{\Gamma(1-\beta)}{\rho^{\beta}}
\int\left(t^{\rho-1}\right)dt+C=
-\frac{\Gamma(1-\beta)}{\rho^{\beta+1}}t^{\rho}+C.$$
Substituting $t=b$ in the above equation, one has
$$0=-\frac{\Gamma(1-\beta)}{\rho^{\beta+1}}t^{\rho}+C\Rightarrow
C=\frac{\Gamma(1-\beta)}{\rho^{\beta+1}}b^{\rho},$$
leading that
$$\int_t^b\frac{\tau^{\rho-1}}{(\tau^{\rho}-t^{\rho}
)^{\beta}}
\phi(\tau)d\tau=\frac{\Gamma(1-\beta)}{\rho^{\beta+1}}
(b^{\rho}-t^{\rho})$$
Equivalently,
$$\frac{\rho^{1-(1-\beta)}}{\Gamma (1-\beta)}\int_t^b.\frac{\tau^{\rho-1}}{(\tau^{\rho}-t^{\rho}
)^{1-(1-\beta)}}
\phi(\tau)d\tau
=\frac{\rho^{1-(1-\beta)}}{\Gamma (1-\beta)}.\frac{\Gamma(1-\beta)}{\rho^{\beta+1}}
(b^{\rho}-t^{\rho})$$
$$\Leftrightarrow I^{1-\beta,\rho}_{b^-}\phi(t) =
\frac{b^{\rho}-t^{\rho}}{\rho}.$$
By applying $D^{1-\beta,\rho}_{b^-}$ for both sides of the above equation, we get that
$$\phi(t)=D^{1-\beta,\rho}_{b^-}
\left(\frac{b^{\rho}-t^{\rho}}{\rho}\right)
=\frac{\Gamma(1+1)}{\Gamma(1+1-(1-\beta))}
\left(\frac{b^{\rho}-t^{\rho}}{\rho}\right)^{1-(1-\beta)}$$
$$
=\frac{\Gamma(2)}{\Gamma(1+\beta)}
\left(\frac{b^{\rho}-t^{\rho}}{\rho}\right)^{\beta}
=\frac{1}{\Gamma(1+\beta)}
\left(\frac{b^{\rho}-t^{\rho}}{\rho}\right)^{\beta}.$$
Combining this and (i),
we get that
$$\phi(t)=\frac{1}{\Gamma(1+\beta)}
\left(\frac{b^{\rho}-t^{\rho}}{\rho}\right)^{\beta}
+C\left(\frac{b^{\rho}-t^{\rho}}{\rho}\right)^{\beta-1}.$$
\noindent (iii) By applying $I^{\beta,\rho}_{a^+}$, we deduce from Lemma \ref{lem4.4} (iii) that
$$\phi(t)-\phi(a)=I^{\beta,\rho}_{a^+}\left({^CD}^{\beta,\rho}_{a^+}
\phi(t)\right)
=I^{\beta,\rho}_{a^+}\phi(t),$$
which together with $\phi(a)=C_1$ leads to
$$\phi(t)=C_1+\frac{\rho^{1-\beta}}{\Gamma(\beta)}
\int_a^t\frac{\tau^{\rho-1}}{(t^{\rho}-\tau^{\rho})^{1-\beta}}
\phi(\tau)d\tau,$$
which completes the proof.
\end{proof}
	
	\subsection{Fuzzy fractional variational problems}
	
	In the sequel, let $\widetilde{f}:\mathbf{F}_C(\mathbb{R})^2\times\mathbf{F}_C(\mathbb{R})^2 \to \mathbf{F}_C(\mathbb{R})$ be a fuzzy function of class $\mathcal{C}^1$,
	$\widetilde{J}:\mathcal{C}^1([a,b])^2\to \mathbf{F}_C(\mathbb{R})$, $\beta\in (0,1)$ and $\rho>0$.
	In this section, we consider the fuzzy fractional variational problem of several dependent variables as follows:
	
	\noindent (P) $\widetilde{\min}$ $\displaystyle\widetilde{J}(\widetilde{{\bf x}})={\widetilde{\int}}_{a}^{b}
	\widetilde{f}(\widetilde{x}_1(t),\widetilde{x}_2(t),
	{_{gr}^C\mathcal{D}}^{\beta}_{a^+}
	\widetilde{x}_1(t),{_{gr}^C\mathcal{D}}^{\beta,\rho}_{a^+}
	\widetilde{x}_2(t),t)dt$
	
	\indent s.t. $\widetilde{x}_i(a)=\widetilde{x}_{0,i},
	\widetilde{x}_i(b)=\widetilde{x}_{1,i}$ for all $i\in I:=\{1,2\}$.
	
	\noindent Denote $\widetilde{{\bf x}}=(\widetilde{x}_1,\widetilde{x}_2)$,  ${_{gr}^C\mathcal{D}}^{\beta,\rho}_{a^+}\widetilde{{\bf x}}
	=({_{gr}^C\mathcal{D}}^{\beta,\rho}_{a^+}\widetilde{x}_1,
	{_{gr}^C\mathcal{D}}^{\beta,\rho}_{a^+}\widetilde{x}_2)$,
	$\widetilde{{\bf x}}(t)=(\widetilde{x}_1(t),\widetilde{x}_2(t))$ and $${_{gr}^C\mathcal{D}}^{\beta,\rho}_{a^+}\widetilde{{\bf x}}(t)
	=({_{gr}^C\mathcal{D}}^{\beta,\rho}_{a^+}\widetilde{x}_1(t),
	{_{gr}^C\mathcal{D}}^{\beta,\rho}_{a^+}\widetilde{x}_2(t)),$$
	$$f[\widetilde{{\bf x}},{_{gr}^C\mathcal{D}}^{\beta,\rho}_{a^+}\widetilde{{\bf x}}](t)=\widetilde{f}(\widetilde{x}_1(t),\widetilde{x}_2(t),
	{_{gr}^C\mathcal{D}}^{\beta,\rho}_{a^+}
	\widetilde{x}_1(t),{_{gr}^C\mathcal{D}}^{\beta,\rho}_{a^+}
	\widetilde{x}_2(t),t).$$
	We propose the following definition in the line of \cite{SDL19,DLK20}.
	\begin{definition}
		A fuzzy function $\widetilde{{\bf x}}^*$ in $\widetilde{\Omega}:=\{\widetilde{{\bf x}}\in \mathcal{C}^1([a,b])^2\mid \widetilde{x}_i(a)=\widetilde{x}_{0,i},\widetilde{x}_i(b)
	=\widetilde{x}_{1,i}\}$ is a minimal solution of (P) if the increment
		of $\widetilde{J}$ has to be fuzzy non-negative, that is,
		$$\Delta\widetilde{J}:=\widetilde{J}(\widetilde{{\bf x}})\ominus_{gr}
		\widetilde{J}(\widetilde{{\bf x}}^*)\ge \widetilde{0}, 		\forall\widetilde{{\bf x}}\in \widetilde{\Omega}.$$
	\end{definition}
	
	\noindent The granular problem (GP) corresponding to the problem (P) is
	
	\noindent min $\displaystyle J^{gr}({\bf x}^{gr})=\int_{a}^{b}
	f^{gr}(x^{gr}_1,x^{gr}_2,
	{^CD}^{\beta,\rho}_{a^+}x_1^{gr},
	{^CD}^{\beta,\rho}_{a^+}x_2^{gr},t,\mu,\alpha_f)dt$
	
	\noindent s.t. $x^{gr}_i(a,\mu,\alpha_{x_i})=x_{0,i}^{gr}(\mu,
	\alpha_{x_{0,i}}),
	x^{gr}_i(b,\mu,\alpha_{x_i})=x_{1,i}^{gr}(\mu,\alpha_{x_{1,i}}),\forall \alpha_{x_i}=\alpha_{x_{0,i}}=\alpha_{x_{1,i}}$
	and ${\bf x}^{gr}=(x^{gr}_1,x^{gr}_2), x^{gr}_i=x^{gr}_i(t,\mu,\alpha_{x_i})$ for $i\in I$.
	
	Denote ${^CD}^{\beta,\rho}_{a^+}{\bf x}^{gr}=({^CD}^{\beta,\rho}_{a^+}x_1^{gr},
	{^CD}^{\beta,\rho}_{a^+}x_2^{gr})$, ${^CD}^{\beta,\rho}_{a^+}x_i^{gr}={^CD}^{\beta,\rho}_{a^+}x_i^{gr}
(t,\mu,\alpha_{x_i})$ for $i\in I$, and
	$$f^{gr}[{\bf x}^{gr},{^CD}^{\beta,\rho}_{a^+}{\bf x}^{gr}](t,\mu,\alpha_f)=f^{gr}(x_1^{gr},x_2^{gr},
	{^CD}^{\beta,\rho}_{a^+}x_1^{gr},
	{^CD}^{\beta,\rho}_{a^+}x_2^{gr},t,\mu,\alpha_f).$$
	Moreover, for $\widetilde{{\bf x}}(t)=(\widetilde{x}_1(t),\widetilde{x}_2(t))\in \mathbf{F}_C(\mathbb{R})^2$ and ${\bf x}^{gr}=(x_1^{gr},x_2^{gr})\in \mathbb{R}^2$ with $x_i^{gr}=x_i^{gr}(t,\mu,\alpha_{x_i})$ for $i=1,2$, denote
	$$\mathcal{H}(\widetilde{{\bf x}}(t))=(\mathcal{H}(\widetilde{x}_1(t)),
	\mathcal{H}(\widetilde{x}_2(t)))={\bf x}^{gr},$$
	$$\mathcal{H}^{-1}({\bf x}^{gr})=(\mathcal{H}^{-1}(x_1^{gr}),
	\mathcal{H}^{-1}(x_2^{gr}))=\widetilde{{\bf x}}(t).$$
	\begin{definition} Let $\mu,\alpha_{x_i}\in [0,1]$ for $i=1,2$.
					A granular  function ${\bf x}^{*gr}=\mathcal{H}(\widetilde{{\bf x}}^*(t))$ in admissible granular functions $\Omega^{gr}$ is a minimal solution of (GP) if the increment
			of $J^{gr}$ has to be  non-negative, that is,
			$$\Delta J^{gr}:=J^{gr}({\bf x}^{gr})-
			J^{gr}({\bf x}^{*gr})\ge 0, \forall {\bf x}^{gr}\in \Omega^{gr}.$$
 \end{definition}
	\begin{proposition}\label{prop3.1} Let $\mathcal{H}:\widetilde{\Omega}\subset\mathbf{F}_C(\mathbb{R})^n\to \mathbb{R}^2$ be defined by $\mathcal{H}(\widetilde{{\bf x}}(t))={\bf x}^{gr}$. Suppose that $\mathcal{H}$ is a bijective map and $\mathcal{H}^{-1}:\mathbb{R}^2\to \mathbf{F}_C(\mathbb{R})^n$ is defined by $\mathcal{H}^{-1}({\bf x}^{gr})=\widetilde{{\bf x}}(t)$. Then,
		\begin{enumerate}
			\item[(i)] $\mathcal{H}(\widetilde{\Omega})=\Omega^{gr}$ and $\mathcal{H}^{-1}(\Omega^{gr})=\widetilde{\Omega}$.
			\item[(ii)] If $\widetilde{{\bf x}}^*(t)$ is a minimal solution of (P) then $\mathcal{H}(\widetilde{{\bf x}}^*(t))={\bf x}^{*gr}$ is a minimal solution of (GP).
			\item[(iii)] If ${\bf x}^{*gr}$ is a minimal solution of (GP) then $\widetilde{{\bf x}}^*(t)=\mathcal{H}^{-1}({\bf x}^{*gr})$ is a minimal solution of (GP).
		\end{enumerate}
	\end{proposition}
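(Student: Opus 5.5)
The plan is to reduce everything to the correspondence between fuzzy objects and their horizontal membership functions. Two facts about this correspondence do the work. First, for fuzzy numbers, equality and the order ``$\ge$'' are defined exactly through $\mathcal{H}$, by the definition of order and equality in Section 2. Second, the remark on granular operators gives $\mathcal{H}({_{gr}^C\mathcal{D}}^{\beta,\rho}_{a^+}\widetilde{x}_i(t))={^CD}^{\beta,\rho}_{a^+}x_i^{gr}$, and the definition of the gr-integral gives $\mathcal{H}(\widetilde{\int}_a^b \widetilde{\phi}\,dt)=\int_a^b\phi^{gr}\,dt$. Combining these with $\mathcal{H}(\widetilde f(\cdot))=f^{gr}(\mathcal{H}(\cdot),\dots)$, I would first record the identity
\[
\mathcal{H}\bigl(\widetilde{J}(\widetilde{{\bf x}})\bigr)=J^{gr}\bigl(\mathcal{H}(\widetilde{{\bf x}})\bigr),\qquad \widetilde{{\bf x}}\in\widetilde{\Omega}.
\]
I would also record that $\mathcal{H}(\widetilde u\ominus_{gr}\widetilde v)=u^{gr}-v^{gr}$ (with $\beta_u=\beta_v$), which is how the granular difference is defined in \cite{MPK17}.

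For (i), I take $\widetilde{{\bf x}}\in\widetilde{\Omega}$. Then $\widetilde{x}_i\in\mathcal{C}^1([a,b])$, so by Remark \ref{rem2.1} each $x_i^{gr}$ is continuously differentiable in $t$. The boundary conditions $\widetilde{x}_i(a)=\widetilde{x}_{0,i}$ and $\widetilde{x}_i(b)=\widetilde{x}_{1,i}$ translate, via the definition of fuzzy equality, into $x_i^{gr}(a,\mu,\alpha)=x_{0,i}^{gr}(\mu,\alpha)$ and $x_i^{gr}(b,\mu,\alpha)=x_{1,i}^{gr}(\mu,\alpha)$ for all equal $\alpha$. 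Hence $\mathcal{H}(\widetilde{{\bf x}})\in\Omega^{gr}$. For the reverse inclusion, I take ${\bf x}^{gr}\in\Omega^{gr}$ and use bijectivity to write ${\bf x}^{gr}=\mathcal{H}(\widetilde{{\bf x}})$. Reading the same equivalences backwards shows $\widetilde{{\bf x}}\in\widetilde{\Omega}$. This gives both $\mathcal{H}(\widetilde{\Omega})=\Omega^{gr}$ and $\mathcal{H}^{-1}(\Omega^{gr})=\widetilde{\Omega}$.

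For (ii), I assume $\widetilde{{\bf x}}^*$ minimizes (P), so $\widetilde{J}(\widetilde{{\bf x}})\ominus_{gr}\widetilde{J}(\widetilde{{\bf x}}^*)\ge\widetilde 0$ for all $\widetilde{{\bf x}}\in\widetilde\Omega$. Applying $\mathcal{H}$ and using the identity above together with the compatibility of $\ominus_{gr}$ gives $J^{gr}(\mathcal{H}(\widetilde{{\bf x}}))-J^{gr}({\bf x}^{*gr})\ge 0$ for all $\mu,\alpha$. By (i), $\mathcal{H}(\widetilde{{\bf x}})$ ranges over all of $\Omega^{gr}$, so ${\bf x}^{*gr}$ is minimal for (GP). For (iii), which should read ``a minimal solution of (P)'', the argument runs in reverse. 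Given ${\bf x}^{gr}\in\Omega^{gr}$ with $J^{gr}({\bf x}^{gr})-J^{gr}({\bf x}^{*gr})\ge0$, any $\widetilde{{\bf x}}\in\widetilde\Omega$ equals $\mathcal{H}^{-1}({\bf x}^{gr})$ for some such ${\bf x}^{gr}$. Then $\mathcal{H}(\Delta\widetilde J)=\Delta J^{gr}\ge 0$, which by the definition of fuzzy order means $\Delta\widetilde J\ge\widetilde 0$.

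The main obstacle is justifying the identity $\mathcal{H}\circ\widetilde J=J^{gr}\circ\mathcal{H}$ rigorously. This requires the composition $\widetilde f$ with fuzzy arguments to commute with $\mathcal{H}$, where the granular parameters $\alpha_f$ collect those of the $\widetilde x_i$ and their derivatives. It also requires the gr-integral of a $\mathcal{C}^1$ integrand to exist. I would treat the first as the defining convention for $f^{gr}$ (as in \cite{MPK17,SDL19}), and obtain the second from continuity of $f^{gr}$ in $t$, which makes the Riemann integral in $t$ exist for each fixed $(\mu,\alpha)$.
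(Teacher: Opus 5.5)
Your proposal is correct and takes essentially the same route as the paper, whose proof only defers to Proposition~9 of \cite{TTKN25}: you transfer the minimality inequality through $\mathcal{H}$ using the granular order, the granular difference and the gr-integral, and let bijectivity handle the admissible sets. You are also right that (iii) should conclude that $\widetilde{{\bf x}}^*$ is a minimal solution of (P), not (GP); just say explicitly that feasibility of ${\bf x}^{*gr}$ in (ii) and of $\widetilde{{\bf x}}^*$ in (iii) follows from part (i).
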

	\begin{proof} The proof is similar to the proof of Proposition 9 in \cite{TTKN25}.
	\end{proof}
	
	\begin{proposition}\label{prop3.2} If $\widetilde{{\bf x}}^*\in \widetilde{\Omega}$ is a minimal solution of (P), then
		${\bf x}^{*gr}=\mathcal{H}(\widetilde{{\bf x}}^*(t))$ satisfies the following granular Euler-Lagrange equations
		\begin{equation}\label{prop3.1-0}
			\partial_if^{gr}+{D}^{\beta,\rho}_{b^-}\partial_{i+n}f^{gr}=0,\forall \mu,\alpha_{f}\in[0,1], \mbox{for each}\; i\in I,
		\end{equation}
		where  $f^{gr}=f^{gr}[{\bf x}^{*gr},{^CD}^{\beta,\rho}_{a^+}{\bf x}^{*gr}](t,\mu,\alpha_f)$,  and ${^CD}^{\beta,\rho}_{a^+}{\bf x}^{*gr}=({^CD}^{\beta,\rho}_{a^+}{x}_1^{*gr},
		{^CD}^{\beta,\rho}_{a^+}{x}_2^{*gr})$.
	\end{proposition}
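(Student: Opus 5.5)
We reduce to the granular problem and then run the classical variational argument with fractional integration by parts. By Proposition \ref{prop3.1} (ii), if $\widetilde{{\bf x}}^*$ is a minimal solution of (P), then ${\bf x}^{*gr}=\mathcal{H}(\widetilde{{\bf x}}^*(t))$ is a minimal solution of (GP). So it suffices to show that any minimizer of the real-valued functional $J^{gr}$ over $\Omega^{gr}$ satisfies the stated equations. Throughout, $\mu$ and $\alpha_f$ (including the $\alpha_{x_i}$) are held fixed, so every object is a real-valued function of $t$. By Remark \ref{rem2.1}, $x_i^{*gr}\in C^1[a,b]$ in $t$.

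\textbf{First variation.} Fix $i\in I$ and take $\eta\in C^1[a,b]$ with $\eta(a)=\eta(b)=0$. Consider ${\bf x}^{gr}_\varepsilon={\bf x}^{*gr}+\varepsilon\eta{\bf e}_i$. This function still meets the boundary conditions, hence lies in $\Omega^{gr}$, since $\mathcal{H}^{-1}$ of it is an admissible fuzzy function by the bijectivity assumption of Proposition \ref{prop3.1}. By linearity of ${^CD}^{\beta,\rho}_{a^+}$ (Lemma \ref{lem4.4}),
$${^CD}^{\beta,\rho}_{a^+}x^{gr}_{i,\varepsilon}={^CD}^{\beta,\rho}_{a^+}x_i^{*gr}+\varepsilon\,{^CD}^{\beta,\rho}_{a^+}\eta .$$
The function $\varepsilon\mapsto J^{gr}({\bf x}^{gr}_\varepsilon)$ has a minimum at $\varepsilon=0$. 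Since $f^{gr}$ is $C^1$, we may differentiate under the integral sign and obtain
$$\int_a^b\Big(\partial_if^{gr}\,\eta+\partial_{i+n}f^{gr}\,{^CD}^{\beta,\rho}_{a^+}\eta\Big)dt=0 .$$

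\textbf{Integration by parts.} Apply Lemma \ref{lem4.5} with $u=\partial_{i+n}f^{gr}$ and $v=\eta$. This gives
$$\int_a^b\partial_{i+n}f^{gr}\,{^CD}^{\beta,\rho}_{a^+}\eta\,dt=I^{1-\beta,\rho}_{b^-}\partial_{i+n}f^{gr}\cdot\eta\Big|_a^b+\int_a^b\eta\,D^{\beta,\rho}_{b^-}\partial_{i+n}f^{gr}\,dt .$$
The boundary term vanishes because $\eta(a)=\eta(b)=0$. We are left with $\int_a^b(\partial_if^{gr}+D^{\beta,\rho}_{b^-}\partial_{i+n}f^{gr})\eta\,dt=0$ for all such $\eta$. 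The fundamental lemma (Lemma \ref{lem2.1}) then yields (\ref{prop3.1-0}). Since $\mu,\alpha_f$ and $i$ were arbitrary, the equations hold for all $\mu,\alpha_f\in[0,1]$ and each $i\in I$.

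\textbf{Main obstacle: regularity.} Lemma \ref{lem4.5} requires $u=\partial_{i+n}f^{gr}\in C^1[a,b]$, and Lemma \ref{lem2.1} requires the bracket to be continuous. The hypothesis that $\widetilde f$ is of class $\mathcal C^1$ gives only continuity of $\partial_{i+n}f^{gr}$ along the trajectory. This is because ${^CD}^{\beta,\rho}_{a^+}x_i^{*gr}$ is merely continuous, and may even blow up near $a$ for general $\rho$. I would handle this as is standard in the fractional Euler--Lagrange literature (e.g.\ \cite{A17}): assume, implicitly, enough smoothness so that $\partial_{i+n}f^{gr}$ is $C^1$ and $D^{\beta,\rho}_{b^-}\partial_{i+n}f^{gr}$ is continuous on $[a,b]$, and state this assumption explicitly. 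A second, lesser point is that perturbing only the granular component must preserve admissibility, i.e.\ the perturbed function must remain a genuine horizontal membership function. This is exactly what the bijectivity assumption in Proposition \ref{prop3.1} supplies.
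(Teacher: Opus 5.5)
Your proposal is correct and follows essentially the same route as the paper's proof: a one-parameter variation, differentiation at $\varepsilon=0$, the fractional integration by parts of Lemma~\ref{lem4.5} with the boundary term removed by the vanishing endpoints, and Lemma~\ref{lem2.1} applied one component at a time. The only difference is cosmetic. You first pass to (GP) through Proposition~\ref{prop3.1}(ii), which carries a bijectivity hypothesis that this statement does not assume, whereas the paper varies $\widetilde{{\bf x}}^*$ directly by $\epsilon\widetilde{{\bf w}}$ and only then applies $\mathcal{H}$. Since your perturbation ${\bf x}^{*gr}+\varepsilon\eta{\bf e}_i$ is just the horizontal membership function of the admissible fuzzy function $\widetilde{{\bf x}}^*+\varepsilon\eta{\bf e}_i$, you can argue the same way and drop that detour.
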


	\begin{proof} Let $\widetilde{{\bf w}}(t)=(\widetilde{w}_1(t),\widetilde{w}_2(t))$ be an arbitrary fuzzy function with $\widetilde{w}_i(t)\in \mathcal{C}^1[a,b]$, satisfying
		$\widetilde{{\bf x}}^*(t)+\epsilon\widetilde{{\bf w}}(t)\in \widetilde{\Omega}$
		for any $\epsilon>0$ small enough and $\widetilde{w}_i(a)=\widetilde{w}_i(b)=\widetilde{0}$ for all $i\in I$. Then, we deduce from $\widetilde{{\bf x}}^*(t)$ is a minimal of (P) that
		$$\Delta\widetilde{J}({\bf x}^*,{\bf w})=\widetilde{J}(\widetilde{{\bf x}}^*+
		\epsilon\widetilde{{\bf w}})\ominus_{gr}
		\widetilde{J}(\widetilde{{\bf x}}^*)\ge \widetilde{0},$$
		or equivalent to
		$$\Delta\widetilde{J}({\bf x}^*,{\bf w})={\widetilde{\int}}_{a}^{b}
		\widetilde{f}[\widetilde{{\bf x}}^*+\epsilon {\bf w},{_{gr}^C\mathcal{D}}^{\beta,\rho}_{a^+}\widetilde{{\bf x}}^*+\epsilon {_{gr}^C\mathcal{D}}^{\beta,\rho}_{a^+}\widetilde{{\bf w}}](t)dt
		\ominus_{gr}
		{\widetilde{\int}}_{a}^{b}
		\widetilde{f}[\widetilde{{\bf x}}^*,{_{gr}^C\mathcal{D}}^{\beta,\rho}_{a^+}\widetilde{{\bf x}}^*](t)dt \ge\widetilde{0}.$$
		This arrives at
		$$\mathcal{H}\left(
		{\widetilde{\int}}_{a}^{b}
		\widetilde{f}[\widetilde{{\bf x}}^*+\epsilon {\bf w},{_{gr}^C\mathcal{D}}^{\beta,\rho}_{a^+}\widetilde{{\bf x}}^*+\epsilon {_{gr}^C\mathcal{D}}^{\beta,\rho}_{a^+}\widetilde{{\bf w}}](t)dt
		\ominus_{gr}
		{\widetilde{\int}}_{a}^{b}
		\widetilde{f}[\widetilde{{\bf x}}^*,{_{gr}^C\mathcal{D}}^{\beta,\rho}_{a^+}\widetilde{{\bf x}}^*](t)dt\right)\ge
		\mathcal{H}(\widetilde{0}),$$
		which in turn implies that
		$$\mathcal{H}\left({\widetilde{\int}}_{a}^{b}
		\widetilde{f}[\widetilde{{\bf x}}^*+\epsilon {\bf w},{_{gr}^C\mathcal{D}}^{\beta,\rho}_{a^+}\widetilde{{\bf x}}^*+\epsilon {_{gr}^C\mathcal{D}}^{\beta,\rho}_{a^+}\widetilde{{\bf w}}](t)dt\right)-
		\mathcal{H}\left({\widetilde{\int}}_{a}^{b}
		\widetilde{f}[\widetilde{{\bf x}}^*,{_{gr}^C\mathcal{D}}^{\beta,\rho}_{a^+}\widetilde{{\bf x}}^*](t)dt\right)\ge
		\mathcal{H}(\widetilde{0}).$$
		From the definition of $gr$-integrable functions, we have
		$$\int_{a}^{b}
		\mathcal{H}\left(\widetilde{f}[\widetilde{{\bf x}}^*+\epsilon {\bf w},{_{gr}^C\mathcal{D}}^{\beta,\rho}_{a^+}\widetilde{{\bf x}}^*+\epsilon {_{gr}^C\mathcal{D}}^{\beta,\rho}_{a^+}\widetilde{{\bf w}}](t)\right)dt-\int_{a}^{b}
		\mathcal{H}\left(\widetilde{f}[\widetilde{{\bf x}}^*,{_{gr}^C\mathcal{D}}^{\beta,\rho}_{a^+}\widetilde{{\bf x}}^*](t)
		\right)dt\ge 0,$$
		which is equivalent to the following inequality
		$$\int_{a}^{b}
		f^{gr}[{\bf x}^{*gr}+\epsilon {\bf w}^{gr},{^CD}^{\beta,\rho}_{a^+}{\bf x}^{*gr}+\epsilon {^CD}^{\beta,\rho}_{a^+}{\bf w}^{gr}](t,\mu,\alpha_f)dt$$
		\begin{equation}\label{prop3.1-4}
			-\int_{a}^{b}
			f^{gr}[{\bf x}^{*gr},{^CD}^{\beta,\rho}_{a^+}{\bf x}^{*gr}](t,\mu,\alpha_f)
			dt\ge 0,
		\end{equation}
		Setting
		$F(\epsilon)=\int_{a}^{b}
		f^{gr}[{\bf x}^{*gr}+\epsilon {\bf w}^{gr},{^CD}^{\beta,\rho}_{a^+}{\bf x}^{*gr}+\epsilon {^CD}^{\beta,\rho}_{a^+}{\bf w}^{gr}](t,\mu,\alpha_f)dt$, we deduce from (\ref{prop3.1-4}) that $\bar{\epsilon}=0$ is a minimum of the real-valued function $F(.)$. It follows from Remark \ref{rem2.1} that
		$f^{gr}$, $x_i^{gr},w_i^{gr}(i=1,2)$ are  real functions of class $C^1$. This together with the fact that  $\phi_i(\epsilon):=x_i^{gr}+\epsilon w_i^{gr}, \phi_i(\epsilon):={^CD}^{\beta,\rho}_{a^+}{ x}_i^{gr}+\epsilon {^CD}^{\beta,\rho}_{a^+}{ w}_i^{gr}(i=1,2)$ are  real functions of class $C^1$ (see e.g. \cite{GH96}) imply that $F$ is differentiable at $\bar{\epsilon}=0$. By applying the chain rules for $f^{gr}(\phi_1(\epsilon),\phi_2(\epsilon),
		\phi_1(\epsilon),\phi_2(\epsilon),t)$, we obtain
		\begin{equation}\label{prop3.1-5}
			\delta J^{gr}({\bf x}^{*gr},{\bf w}^{gr}):=\frac{d F(\epsilon)}{d\epsilon}\mid _{\epsilon=0}=\int_{a}^{b}\sum\limits_{k=1}^n
			\left(
			\partial_kf^{gr}.w_k^{gr}+\partial_{k+n}f^{gr}.{^CD}^{\beta,\rho}_{a^+}{ w}_k^{gr}\right)dt.
		\end{equation}
		From Lemma \ref{lem4.5} and the fact that $w^{gr}_k(a,\mu,\alpha_{w_k})=w_k^{gr}(b,\mu,\alpha_{w_k})=0$ for all $k\in I$ implies
		\begin{align*}
			\int_{a}^{b}
			\partial_{k+n}f^{gr}.{^CD}^{\beta,\rho}_{a^+}{ w}_k^{gr}dt&=
			I_{b^-}^{1-\beta,\rho}\partial_{k+n}f^{gr}.w^{gr}_k\mid_{a}^{b}
			+\int_{a}^{b}
			{D}^{\beta,\rho}_{b^-}\partial_{k+n}f^{gr}.w_k^{gr}dt\\
			&=\int_{a}^{b}
			{D}^{\beta,\rho}_{b^-}\partial_{k+n}f^{gr}.w_k^{gr}dt.
		\end{align*}
		Combining all of this information, (\ref{prop3.1-5}), and  the fact $\bar{\epsilon}=0$ is a minimum of $F$, we entail that
		$$\int_{a}^{b}\sum\limits_{k=1}^n\left(
		\partial_kf^{gr}+{D}^{\beta,\rho}_{b^-}\partial_{k+n}f^{gr}\right)w_k^{gr}dt=0.$$
		For each $i\in I$, applying Lemma \ref{lem2.1} with $w_k(t)=0$ for all $k\in I\setminus \{i\}$, it is easy to get that
		$$\partial_if^{gr}+{D}^{\beta,\rho}_{b^-}\partial_{i+n}f^{gr}=0,\forall \mu,\alpha_f\in [0,1], \forall i\in I.$$
		The proof is complete.
	\end{proof}

	\begin{definition} (see e.g. \cite{A17,MO20,TAT15,TR20,TT23,TTKN25})
		The real-valued function $\phi:\mathbb{R}^2\times \mathbb{R}^2\times [a, b] \times [0,1]^2 \to \mathbb{R}$ is said to be convex with respect to first $4$-arguments in $S\subset \mathbb{R}^{7}$ if
		$$\phi[{\bf u}+{\bf p},{\bf v}+{\bf q}](t,\mu,\alpha_{\phi})
		-\phi[{\bf u},{\bf v}](t,\mu,\alpha_{\phi})$$
		$$\ge \sum\limits_{k=1}^2\left(\partial_k \phi[{\bf u},{\bf v}](t,\mu,\alpha_{\phi})p_k+
		\partial_{k+2}\phi[{\bf u},{\bf v}](t,\mu,\alpha_{\phi})q_k\right)$$
		for all $(u_1,u_{2},v_1,v_2,t,\mu,\alpha_{\phi}), (u_1+p_1,u_{2}+p_{2},v_1+q_1,v_2+q_2,t,\mu,\alpha_{\phi})$ in $S$ and $\partial_k (k=1,...,4)$ is the partial derivative of function $\phi(.,.,.,.,t,\mu,\alpha_{\phi})$ with respect to its $k$th argument.
	\end{definition}

	\begin{proposition}\label{prop3.3} Suppose that $\mathcal{H}:\widetilde{\Omega}\subset\mathbf{F}_C(\mathbb{R})\to \Omega^{gr}\subset\mathbb{R}$, defined by $\mathcal{H}(\widetilde{{\bf x}}(t))={\bf x}^{gr}$, is a bijective map and ${\bf x}^{*gr}$
		satisfies
		\begin{equation}\label{prop3.15-1}
			\partial_if^{gr}+{D}^{\beta,\rho}_{b^-}\partial_{i+n}f^{gr}=0, \forall \mu,\alpha_f\in[0,1], \forall i\in I.
		\end{equation}
		If $f^{gr}[{\bf x}^{gr},{^CD}^{\beta,\rho}_{a^+}{\bf x}^{gr}](t,\mu,\alpha_f)$ is convex with respect to first $4$-arguments in $X_{ad}^{gr}$, then ${\bf x}^{*gr}$ is a minimal solution of (GP) and $\widetilde{{\bf x}}^*(t)=\mathcal{H}^{-1}({\bf x}^{*gr})$ is a minimal solution of (P).
	\end{proposition}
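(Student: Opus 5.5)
Fix an arbitrary admissible granular function ${\bf x}^{gr}\in\Omega^{gr}$ and arbitrary $\mu,\alpha_f\in[0,1]$, and set ${\bf w}^{gr}:={\bf x}^{gr}-{\bf x}^{*gr}$. Since both functions satisfy the same boundary conditions of (GP), $w_k^{gr}(a,\mu,\alpha_{w_k})=w_k^{gr}(b,\mu,\alpha_{w_k})=0$ for $k\in I$. By Remark \ref{rem2.1}, $w_k^{gr}$ is of class $C^1$ in $t$. Linearity of the Caputo-Katugampola derivative gives ${^CD}^{\beta,\rho}_{a^+}x_k^{gr}={^CD}^{\beta,\rho}_{a^+}x_k^{*gr}+{^CD}^{\beta,\rho}_{a^+}w_k^{gr}$; this follows from the integral form in Definition \ref{defn2.14}(iii). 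We then apply the convexity of $f^{gr}$ in its first four arguments with ${\bf u}={\bf x}^{*gr}$, ${\bf p}={\bf w}^{gr}$, ${\bf v}={^CD}^{\beta,\rho}_{a^+}{\bf x}^{*gr}$ and ${\bf q}={^CD}^{\beta,\rho}_{a^+}{\bf w}^{gr}$. For each $t$ this yields
$$f^{gr}[{\bf x}^{gr},{^CD}^{\beta,\rho}_{a^+}{\bf x}^{gr}]-f^{gr}[{\bf x}^{*gr},{^CD}^{\beta,\rho}_{a^+}{\bf x}^{*gr}]\ge\sum_{k=1}^2\left(\partial_kf^{gr}\,w_k^{gr}+\partial_{k+2}f^{gr}\,{^CD}^{\beta,\rho}_{a^+}w_k^{gr}\right),$$
where the partial derivatives are evaluated along ${\bf x}^{*gr}$. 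The points involved lie in $X^{gr}_{ad}$ by hypothesis.

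Next, integrate over $[a,b]$, so that the left-hand side becomes $\Delta J^{gr}=J^{gr}({\bf x}^{gr})-J^{gr}({\bf x}^{*gr})$. Transform the second term on the right exactly as in the proof of Proposition \ref{prop3.2}. Lemma \ref{lem4.5} with $u=\partial_{k+2}f^{gr}$ and $v=w_k^{gr}$ gives
$$\int_a^b\partial_{k+2}f^{gr}\,{^CD}^{\beta,\rho}_{a^+}w_k^{gr}dt=I^{1-\beta,\rho}_{b^-}\partial_{k+2}f^{gr}\cdot w_k^{gr}\Big|_a^b+\int_a^b w_k^{gr}\,D^{\beta,\rho}_{b^-}\partial_{k+2}f^{gr}dt.$$
The boundary term vanishes because $w_k^{gr}$ is zero at $a$ and $b$. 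Therefore
$$\Delta J^{gr}\ge\int_a^b\sum_{k=1}^2\left(\partial_kf^{gr}+D^{\beta,\rho}_{b^-}\partial_{k+2}f^{gr}\right)w_k^{gr}\,dt=0$$
by the Euler--Lagrange equations (\ref{prop3.15-1}). Since ${\bf x}^{gr}$, $\mu$ and $\alpha_f$ were arbitrary, ${\bf x}^{*gr}$ is a minimal solution of (GP).

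For the fuzzy statement, $\mathcal{H}$ is assumed bijective, so Proposition \ref{prop3.1}(i) gives $\mathcal{H}^{-1}(\Omega^{gr})=\widetilde{\Omega}$. We then reverse the chain of equivalences in the proof of Proposition \ref{prop3.2}. The first step uses the definition of the $gr$-integral: $\mathcal{H}(\widetilde{J}(\widetilde{{\bf x}}))=J^{gr}({\bf x}^{gr})$. The second step uses the fact that $\mathcal{H}$ of a granular difference is the difference of the $\mathcal{H}$'s. Together these give $\mathcal{H}(\Delta\widetilde{J})=\Delta J^{gr}\ge0=\mathcal{H}(\widetilde0)$ for all $\mu$ and $\alpha$, which means $\Delta\widetilde{J}\ge\widetilde0$ for every $\widetilde{{\bf x}}\in\widetilde{\Omega}$. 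Hence $\widetilde{{\bf x}}^*=\mathcal{H}^{-1}({\bf x}^{*gr})$ is a minimal solution of (P). This is essentially the content of Proposition \ref{prop3.1}(iii), whose statement should be read with (P) in its conclusion.

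The main obstacle is justifying Lemma \ref{lem4.5} for $u=\partial_{k+2}f^{gr}$. The lemma requires $u\in C^1[a,b]$, but $\partial_{k+2}f^{gr}$ composed with the possibly singular ${^CD}^{\beta,\rho}_{a^+}x^{*gr}$ need not be smooth. I would assume enough regularity for the integration by parts, in the same implicit way as in Proposition \ref{prop3.2}, and argue that the boundary term is killed by $w_k^{gr}$ vanishing at both endpoints provided $I^{1-\beta,\rho}_{b^-}\partial_{k+2}f^{gr}$ stays bounded there.
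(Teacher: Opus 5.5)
Your proposal follows the paper's own proof step for step. It uses the pointwise convexity inequality with ${\bf p}={\bf w}^{gr}$ and ${\bf q}={^CD}^{\beta,\rho}_{a^+}{\bf w}^{gr}$, integrates over $[a,b]$, and then integrates by parts as in Proposition \ref{prop3.2} via Lemma \ref{lem4.5}. It then uses the Euler--Lagrange equations and transfers the result to (P) through Proposition \ref{prop3.1}; you correctly read part (iii) of that proposition with (P) in its conclusion.

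The step that actually fails is not the regularity issue you flag. It is Lemma \ref{lem4.5} itself, which, as stated in the paper, is false for $\rho\neq1$, and your argument inherits this. Swap the order of integration: the Caputo--Katugampola kernel $(t^\rho-\tau^\rho)^{-\beta}$ lacks the factor $t^{\rho-1}$ that sits in the kernel of $I^{1-\beta,\rho}_{b^-}$. Hence $\int_a^b u\,{^CD}^{\beta,\rho}_{a^+}v\,dt=\int_a^b v'(\tau)\,I^{1-\beta,\rho}_{b^-}\big(t^{1-\rho}u\big)(\tau)\,d\tau$. Integrating by parts with $\frac{d}{dt}I^{1-\beta,\rho}_{b^-}\phi=-t^{\rho-1}D^{\beta,\rho}_{b^-}\phi$, which is just the definition of $D^{\beta,\rho}_{b^-}$, gives
\[
\int_a^b u\,{^CD}^{\beta,\rho}_{a^+}v\,dt=\Big[v\,I^{1-\beta,\rho}_{b^-}\big(t^{1-\rho}u\big)\Big]_a^b+\int_a^b v\,t^{\rho-1}D^{\beta,\rho}_{b^-}\big(t^{1-\rho}u\big)\,dt .
\]
This reduces to Lemma \ref{lem4.5} only when $\rho=1$. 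For example, take $u\equiv1$, $a=0$, $\rho=2$. Lemma \ref{lem4.5} would then force $-\frac{d}{d\tau}\int_\tau^b(t^2-\tau^2)^{-\beta}dt=(b^2-\tau^2)^{-\beta}$ on $(0,b)$. But the left side grows like $\tau^{-2\beta}$ as $\tau\to0^+$, while the right side tends to $b^{-2\beta}$.

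Consequently your chain of inequalities only yields
\[
\Delta J^{gr}\ge\int_a^b\sum_{k=1}^2\Big(\partial_kf^{gr}+t^{\rho-1}D^{\beta,\rho}_{b^-}\big(t^{1-\rho}\partial_{k+2}f^{gr}\big)\Big)w_k^{gr}\,dt .
\]
Hypothesis (\ref{prop3.15-1}) does not make this integrand vanish, because $t^{\rho-1}D^{\beta,\rho}_{b^-}(t^{1-\rho}\phi)\neq D^{\beta,\rho}_{b^-}\phi$ in general. The right-hand side is then a nonzero linear functional of ${\bf w}^{gr}$, and no sign follows.

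For a strictly convex integrand such as $({^CD}^{\beta,\rho}_{a^+}x)^2$, a solution of the unweighted equation is in general not a minimizer at all. For instance, ${^CD}^{\beta,\rho}_{0^+}x=C\big(\frac{1-t^\rho}{\rho}\big)^{\beta-1}$ with $C\neq0$ solves $D^{\beta,\rho}_{1^-}({^CD}^{\beta,\rho}_{0^+}x)=0$; this is the $x_1$-component of Example \ref{exa3.1}. But $t^{1-\rho}$ times this function is not in the kernel of $D^{\beta,\rho}_{1^-}$ by Lemma \ref{lem4.6}(i), so the first variation at that candidate does not vanish.

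There are two ways to close the argument, both keeping the same vanishing boundary term. One is to replace the hypothesis by $\partial_if^{gr}+t^{\rho-1}D^{\beta,\rho}_{b^-}\big(t^{1-\rho}\partial_{i+2}f^{gr}\big)=0$. The other is to integrate $J^{gr}$ with respect to $t^{\rho-1}dt$, for which the unweighted formula of Lemma \ref{lem4.5} is correct. Either way your proof then goes through verbatim. Your regularity caveat about $\partial_{k+2}f^{gr}$ is legitimate, but secondary.
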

	\begin{proof} Since $f^{gr}[{\bf x}^{gr},{^CD}^{\beta,\rho}_{a^+}{\bf x}^{gr}](t,\mu,\alpha_f)$ is convex with respect to first $4$-arguments  in $\Omega_{ad}$, for any admissible function ${\bf x}^{*gr}+{\bf w}^{gr}$ with $w_i^{gr}(a,\mu,\alpha_{w_i})=w_i^{gr}(b,\mu,\alpha_{w_i})=0$ for all $i\in I$,  we have
		
		\noindent $J^{gr}({\bf x}^{*gr}+{\bf w}^{gr})-J^{gr}({\bf x}^{*gr})$
		$$= \int_{a}^{b}
		f^{gr}[{\bf x}^{*gr}+
		{\bf w}^{gr},{^CD}^{\beta,\rho}_{a^+}{\bf x}^{*gr}+
		{^CD}^{\beta,\rho}_{a^+}{\bf w}^{gr}](t,\mu,\alpha_f)dt-\int_{a}^{b}
		f^{gr}[{\bf x}^{*gr},{^CD}^{\beta,\rho}_{a^+}{\bf x}^{*gr}](t,\mu,\alpha_f)
		dt
		$$
		$$= \int_{a}^{b}
		\left(f^{gr}[{\bf x}^{*gr}+
		{\bf w}^{gr},{^CD}^{\beta,\rho}_{a^+}{\bf x}^{*gr}+
		{^CD}^{\beta,\rho}_{a^+}{\bf w}^{gr}](t,\mu,\alpha_f)-
		f^{gr}[{\bf x}^{*gr},{^CD}^{\beta,\rho}_{a^+}{\bf x}^{*gr}](t,\mu,\alpha_f)\right)
		dt
		$$
		$$\ge  \int_{a}^{b}
		\sum\limits_{k=1}^2
		\left(\partial_kf^{gr}[{\bf x}^{*gr},{^CD}^{\beta,\rho}_{a^+}{\bf x}^{*gr}](t,\mu,\alpha_f)w_k^{gr}
		+\partial_{k+2}f^{gr}[{\bf x}^{*gr},{^CD}^{\beta,\rho}_{a^+}{\bf x}^{*gr}](t,\mu,\alpha_f) {^CD}^{\beta,\rho}_{a^+}{w}_k^{gr}\right)dt.
		$$
		Proceeding exactly as in the proof of Proposition \ref{prop3.2}, we deduce from (\ref{prop3.15-1}) that
		\begin{eqnarray*}
			J^{gr}({\bf x}^{*gr}+{\bf w}^{gr})-J^{gr}({\bf x}^{*gr})&\ge&
			\int_{a}^{b}
			\sum\limits_{k=1}^2
			\left(\partial_kf^{gr}.w_k^{gr}
			+\partial_{k+2}f^{gr}. {^CD}^{\beta,\rho}_{a^+}{w}_k^{gr}\right)dt\\
			&\ge&\int_{a}^{b}\sum\limits_{k=1}^2		\left(\partial_kf^{gr}+{D}^{\beta,\rho}_{b^-}\partial_{k+2}f^{gr}
			\right)w_k^{gr}dt\\
			&=& 0.
		\end{eqnarray*}
		We conclude that ${\bf x}^{*gr}$ is a minimal of (GP) and $\widetilde{{\bf x}}^*(t)=\mathcal{H}^{-1}({\bf x}^{*gr})$ is a minimal of (P) by Proposition \ref{prop3.1}.
	\end{proof}

	\begin{example}\label{exa3.1} Consider the (P) with $\beta\in\left[\frac{1}{2},1\right]$ as follows
		
		\noindent (P) $\widetilde{\min}$ $\displaystyle\widetilde{J}(\widetilde{{\bf x}})={\widetilde{\int}}_{0}^{1}
		\left(({_{gr}^C\mathcal{D}}^{\beta,\rho}_{0^+}
		\widetilde{x}_1(t))^2+({_{gr}^C\mathcal{D}}^{\beta,\rho}_{0^+}
		\widetilde{x}_2(t))^2+2
		\widetilde{x}_2(t)\right)dt$
		
		\noindent s.t. $\widetilde{x}_1(0)=\widetilde{x}_2(0)=(0,0,0),\widetilde{x}_1(1)=(1,2,3),
		\widetilde{x}_2(1)=(0,1,2)$.
		
		\noindent The granular problem corresponding to the problem (P) is (GP)
		
		\noindent  min $\displaystyle J^{gr}({\bf x}^{gr})=\int_{0}^{1}
		\left(({^CD}^{\beta,\rho}_{0^+}
		x_1^{gr}(t,\mu,\alpha_{x_1}))^2+({^CD}^{\beta,\rho}_{0^+}
		x_2^{gr}(t,\mu,\alpha_{x_2}))^2+2
		x_2^{gr}(t,\mu,\alpha_{x_2})\right)dt$
		
		\noindent s.t. $x_1^{gr}(0,\mu,\alpha_{x_1})=x_2^{gr}(0,\mu,\alpha_{x_1})=0,
		x_1^{gr}(1,\mu,\alpha_{x_1})=1+\mu+(2-2\mu)\alpha_{u_1},$
		$x_2^{gr}(1,\mu,\alpha_{x_2})=\mu+(2-2\mu)\alpha_{u_2},\;\forall \alpha_{x_1}=\alpha_{u_1}, \alpha_{x_2}=\alpha_{u_2}$ ,where
		$\widetilde{u}_1=(1,2,3), \widetilde{u}_2=(0,1,2)$.
		
		\noindent Since $f^{gr}[{\bf x}^{gr},{^CD}^{\beta,\rho}_{0^+}
		{\bf x}^{gr}](t,\mu,\alpha_f)=({^CD}^{\beta,\rho}_{0^+}
		x_1^{gr})^2+({^CD}^{\beta,\rho}_{0^+}
		x_2^{gr})^2+2
		x_2^{gr}$, we deduce from the granular Euler-Lagrange conditions that
		$$0=\partial_1f^{gr}+{D}^{\beta,\rho}_{1^-}\partial_3f^{gr}=0+{D}^{\beta,\rho}_{1^-}
		(2.{^CD}^{\beta,\rho}_{0^+}
		x^{gr}),$$
		$$0=\partial_2f^{gr}+{D}^{\beta,\rho}_{1^-}\partial_4f^{gr}=2+{D}^{\beta,\rho}_{1^-}
		(2.{^CD}^{\beta,\rho}_{0^+}
		x^{gr}).$$
		So, we have the granular equations
		\begin{equation}\label{exa3.11.-1}
			\left\{\begin{array}{l}
				{D}^{\beta,\rho}_{1^-}
				({^CD}^{\beta,\rho}_{0^+}
				x_1^{gr}(t,\mu,\alpha_{x_1}))=0,\\
				x_1^{gr}(0,\mu,\alpha_{x_1})=0,
				x_1^{gr}(1,\mu,\alpha_{x_1})=1+\mu+(2-2\mu)\alpha_{u_1},
			\end{array}\right.
		\end{equation}
		and
		\begin{equation}\label{exa3.11.-2}
			\left\{\begin{array}{l}
				{D}^{\beta,\rho}_{1^-}
				({^CD}^{\beta,\rho}_{0^+}
				x_2^{gr}(t,\mu,\alpha_{x_2}))=-1,\\
				x_2^{gr}(0,\mu,\alpha_{x_2})=0,
				x_2^{gr}(1,\mu,\alpha_{x_2})=\mu+(2-2\mu)\alpha_{u_2}.
			\end{array}\right.
		\end{equation}
		Suppose that ${\bf x}^{*gr}=(x_1^{*gr}(t,\mu,\alpha_{x_1^*}),x_2^{*gr}(t,\mu,\alpha_{x_2^*}))$ is the solution of the above equations. It follows from  Lemma \ref{lem4.6} (i) that ${D}^{\beta,\rho}_{1^-}
		({^CD}^{\beta,\rho}_{0^+}
		x_1^{*gr}(t,\mu,\alpha_{x_1^*}))=0$ iff
		$${^CD}^{\beta,\rho}_{0^+}
		x_1^{*gr}(t,\mu,\alpha_{x_1^*})
=C_1\left(\frac{1-t^\rho}{\rho}\right)^{\beta-1}.$$
		Moreover, using the results in Lemma \ref{lem4.6} (iii), we deduce from the above equation, $x_1^{*gr}(0,\mu,\alpha_{x_1^*})=0$ and Lemma \ref{lem2.15} (i) that
\begin{eqnarray*}
x_1^{*gr}(t,\mu,\alpha_{x_1^*})&=&\frac{1}{\Gamma(\beta)}
		\int_0^t\left(\frac{t^{\rho}-
\tau^{\rho}}{\rho}\right)^{\beta-1}\tau^{\rho-1}.		C_1\left(\frac{1-\tau^\rho}{\rho}\right)^{\beta-1}d\tau\\
&=&C_1\frac{\rho^{2-2\beta}}{\Gamma(\beta)}.
		\int_0^t\left(t^{\rho}-
\tau^{\rho}\right)^{\beta-1}\tau^{\rho-1}.		\left(1-\tau^\rho\right)^{\beta-1}d\tau\\
&=&C_1\frac{\rho^{2-2\beta}}{\Gamma(\beta)}.
		\frac{t^{\beta \rho}}{\beta\rho}.
			{_2F_1}(1,1-\beta;1+\beta;t^\rho)\\
&=&C_1\frac{\rho^{1-2\beta}}{\beta\Gamma(\beta)}.
		t^{\beta \rho}
			{_2F_1}(1,1-\beta;1+\beta;t^\rho).
\end{eqnarray*}
		It follows from Lemma \ref{lem2.15} (ii) and $x_1^{*gr}(1,\mu,\alpha_{x^*_1})=1+\mu+(2-2\mu)\alpha_{u_1}$ for all $\alpha_{x^*_1}=\alpha_{u_1}$ that
		$$C_1\frac{\rho^{1-2\beta}}{\beta\Gamma(\beta)}.	1^{\beta\rho}\frac{\beta\rho}{2\beta-1}=1+\mu+(2-2\mu)\alpha_{u_1}\;
\mbox{or}\;		C_1=(1+\mu+(2-2\mu)\alpha_{u_1})\rho^{2-2\beta}\Gamma(\beta)(2\beta-1)$$
		which in turn implies that the solution of (\ref{exa3.11.-1}) is
		$$x_1^{*gr}(t,\mu,\alpha_{x_1^*})=(1+\mu+(2-2\mu)\alpha_{x_1^*})
		\frac{(2\beta-1)\rho^{3-4\beta}}{\beta}.t^{\beta\rho}.
		{_2F_1}(1,1-\beta;1+\beta;t^\rho).$$
Since
$${D}^{\beta,\rho}_{1^-}\left({^CD}^{\beta,\rho}_{0^+}
			x_2^{gr}(t,\mu,\alpha_{x_2})\right)=-1\Leftrightarrow
{D}^{\beta,\rho}_{1^-}\left(-{^CD}^{\beta,\rho}_{0^+}
				x_2^{gr}(t,\mu,\alpha_{x_2}))\right)=1,$$
we deduce from Lemma \ref{lem4.6} that
	$${^CD}^{\beta,\rho}_{0^+}	x_2^{*gr}(t,\mu,\alpha_{x_2^*})=-
\left(\frac{1}{\Gamma(1+\beta)}\left(\frac{1-t^\rho}{\rho}\right)^{\beta}
		+C_2\left(\frac{1-t^\rho}{\rho}\right)^{\beta-1}\right).$$
		Moreover, using the results in Lemma \ref{lem4.6} (iii), we deduce from the above equation and $x_2^{*gr}(0,\mu,\alpha_{x_2^*})=0$ that
		$$x_2^{*gr}(t,\mu,\alpha_{x_2^*})=-\frac{1}{\Gamma(\beta)}		\int_0^t\left(\frac{t^\rho-\tau^\rho}{\rho}\right)^{\beta-1}\tau^{\rho-1}
.\left(\frac{1}{\Gamma(1+\beta)}\left(\frac{1-\tau^\rho}{\rho}\right)^{\beta}
+C_2\left(\frac{1-\tau^\rho}{\rho}\right)^{\beta-1}\right)d\tau
		$$
$$=-\frac{\rho^{1-2\beta}}
{\Gamma(\beta)\Gamma(1+\beta)}\int_0^t(t-\tau)^{\beta-1}	\tau^{\rho-1}(1-\tau^\rho)^{\beta}d\tau-
\frac{C_2\rho^{2-2\beta}}{\Gamma(\beta)}
\int_0^t(t^\rho-\tau^\rho)^{\beta-1}\tau^{\rho-1}
		(1-\tau^\rho)^{\beta-1}d\tau.$$
		It follows from Lemma \ref{lem2.15} (i) that
		$$x_2^{*gr}(t,\mu,\alpha_{x_2^*})=
-\frac{\rho^{1-2\beta}}{\Gamma(\beta)\Gamma(1+\beta)}.
	\frac{t^{\beta\rho}}{\beta\rho}{_2F_1}(1,-\beta;1+\beta;t^\rho)
		-\frac{C_2\rho^{2-2\beta}}{\Gamma(\beta)}.
\frac{t^{\beta\rho}}{\beta\rho}
		{_2F_1}(1,1-\beta;1+\beta;t^\rho).$$
$$=
-\frac{\rho^{-2\beta}}{\beta\Gamma(\beta)\Gamma(1+\beta)}.
	t^{\beta\rho}{_2F_1}(1,-\beta;1+\beta;t^\rho)
		-\frac{C_2\rho^{1-2\beta}}{\beta\Gamma(\beta)}.
t^{\beta\rho}
		{_2F_1}(1,1-\beta;1+\beta;t^\rho).$$
		By using the endpoint conditions $x_2^{*gr}(1,\mu,\alpha_{x_2^*})=\mu+(2-2\mu)\alpha_{u_2}$ with $\alpha_{x_2^*}=\alpha_{u_2}$, one has
		$$		\mu+(2-2\mu)\alpha_{u_2}=-\frac{\rho^{-2\beta}}
{\Gamma(\beta)\Gamma(1+\beta)\beta}.
		{_2F_1}(1,-\beta;1+\beta;1)		-\frac{C_2\rho^{1-2\beta}}{\Gamma(\beta)\beta}.
{_2F_1}(1,1-\beta;1+\beta;1),
		$$
		which together with Lemma \ref{lem4.6} (ii) entails that
		$$\mu+(2-2\mu)\alpha_{u_2}=
-\frac{\rho^{-2\beta}}{\Gamma(\beta)\Gamma(1+\beta)\beta}.
		\frac{1}{2}		-\frac{C_2\rho^{1-2\beta}}{\Gamma(\beta)\beta}.\frac{\beta}{2\beta-1}.$$
		Hence,		$$C_2=-(2\beta-1)\left(\rho^{1-2\beta}\Gamma(\beta)
(\mu+(2-2\mu)\alpha_{u_2})
		+\frac{1}
		{2\beta\rho\Gamma(1+\beta)}\right).$$
		Substituting into $x_2^{*gr}(t,\mu,\alpha_{x_2^*})$, one has
		$$x_2^{*gr}(t,\mu,\alpha_{x_2^*})=
		-\frac{\rho^{-2\beta}}{\beta\Gamma(\beta)\Gamma(1+\beta)}.
		t^{\beta\rho}{_2F_1}(1,-\beta;1+\beta;t^\rho)$$
		$$+\frac{\rho^{1-2\beta}(2\beta-1)}{\Gamma(\beta)\beta}
	\left(\rho^{2\beta-1}\Gamma(\beta)(\mu+(2-2\mu)\alpha_{x_2^*})
		+\frac{1}		{2\beta\rho\Gamma(1+\beta).\beta}\right).t^{\beta\rho}
{_2F_1}(1,1-\beta;1+\beta;t^\rho).$$
		Moreover, since
		
		$f^{gr}[{\bf u}+{\bf p},{\bf v}+{\bf q}](t,\mu,\alpha_f)
		-f^{gr}[{\bf u}, {\bf v}](t,\mu,\alpha_f)$
		\begin{eqnarray*}
			&=&(v_1+q_1)^2+(v_2+q_2)^2+2(u_2+p_2)-(v_1^2+v_2^2+2u_2)
			\\
			&=& 2v_1q_1+2v_2q_2+2p_2+q_1^2+q_2^2
			\\
			&\ge& 0.p_1+2p_2+2v_1q_1+2v_2q_2
			\\
			&=& \partial_1 f^{gr}p_1+
			\partial_2f^{gr}p_2+\partial_3 f^{gr}q_1+
			\partial_4f^{gr}q_2,
		\end{eqnarray*}
		$f^{gr}$ is convex with respect to first 4-arguments in $\Omega^{gr}$. Invoking Proposition \ref{prop3.3}, we obtain that ${\bf x}^{*gr}=(x_1^{*gr}(t,\mu,\alpha_{x_1^*}),x_2^{*gr}(t,\mu,\alpha_{x_2^*}))$ is  a minimum of (GP) and
		$\widetilde{{\bf x}}^*(t)=\mathcal{H}^{-1}({\bf x}^{*gr}))$ whose $\mu$-level sets are
		$$[\widetilde{x}_1^*(t)]^{\mu}=\left[\frac{(1+\mu)
			(2\beta-1)\rho^{3-4\beta}w_2(\beta,t)}
		{\beta},\frac{(3-\mu)(2\beta-1)\rho^{3-4\beta}w_2(\beta,t)}
		{\beta}\right],$$
		$$[\widetilde{x}_2^*(t)]^{\mu}
=\left[-\frac{\rho^{-2\beta}w_1(\beta,t)}
		{\Gamma(\beta)\Gamma(1+\beta)\beta}
+\frac{\rho^{1-2\beta}(2\beta-1)}{\Gamma(\beta)\beta}
		\left(\rho^{2\beta-1}\Gamma(\beta)\mu
		+\frac{1}
		{2\Gamma(1+\beta).\beta\rho}\right)w_2(\beta,t),
		\right.$$
		$$\left. -\frac{\rho^{-2\beta}w_1(\beta,t)}
		{\Gamma(\beta)\Gamma(1+\beta)\beta}
+\frac{\rho^{1-2\beta}(2\beta-1)}{\Gamma(\beta)\beta}
		\left(\rho^{2\beta-1}\Gamma(\beta)(2-\mu)
		+\frac{1}
		{2\Gamma(1+\beta).\beta\rho}\right)w_2(\beta,t)\right]$$
		is a minimal solution of (P), where $w_1(\beta,t)=t^{\beta\rho}.{_2F_1}(1,-\beta;1+\beta;t^\rho)$ and $w_2(\beta,t)=t^{\beta\rho}.{_2F_1}(1,1-\beta;1+\beta;t^\rho)$. See the graph of $(\widetilde{x}_1^*(t),\widetilde{x}_2^*(t))$ with $\beta=\frac{3}{4},\rho=4$ in Figure 3.1.1 and Figure 3.1.2 and the graph of $(\widetilde{x}_1^*(t),\widetilde{x}_2^*(t))$ with $\beta=\frac{3}{4},\rho=\frac{1}{2}$ in Figure 3.1.3 and Figure 3.1.4.
\begin{figure}[htbp]
    \centering
      \begin{tabular}{cc}
        \includegraphics[width=0.45\textwidth]{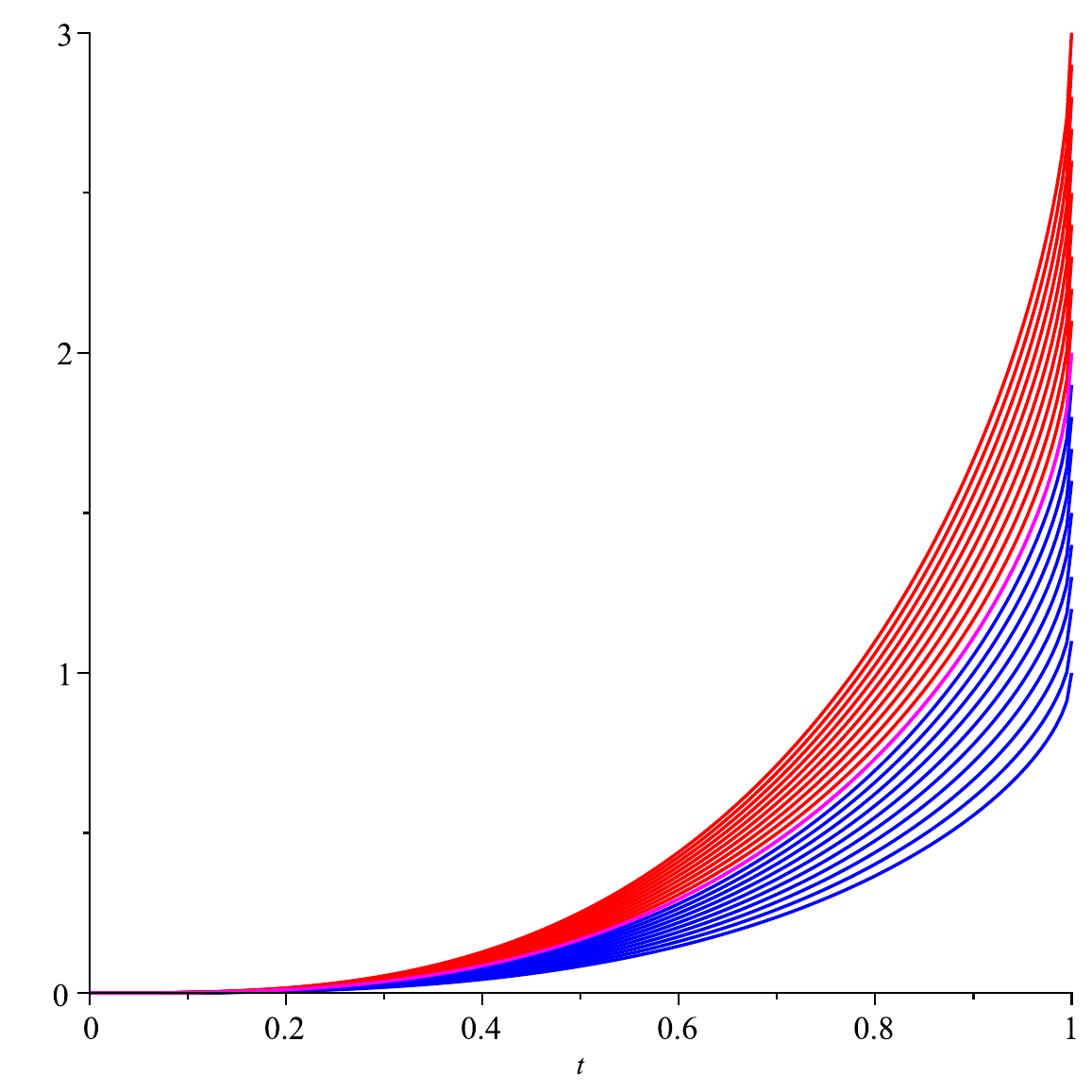} &
        \includegraphics[width=0.45\textwidth]{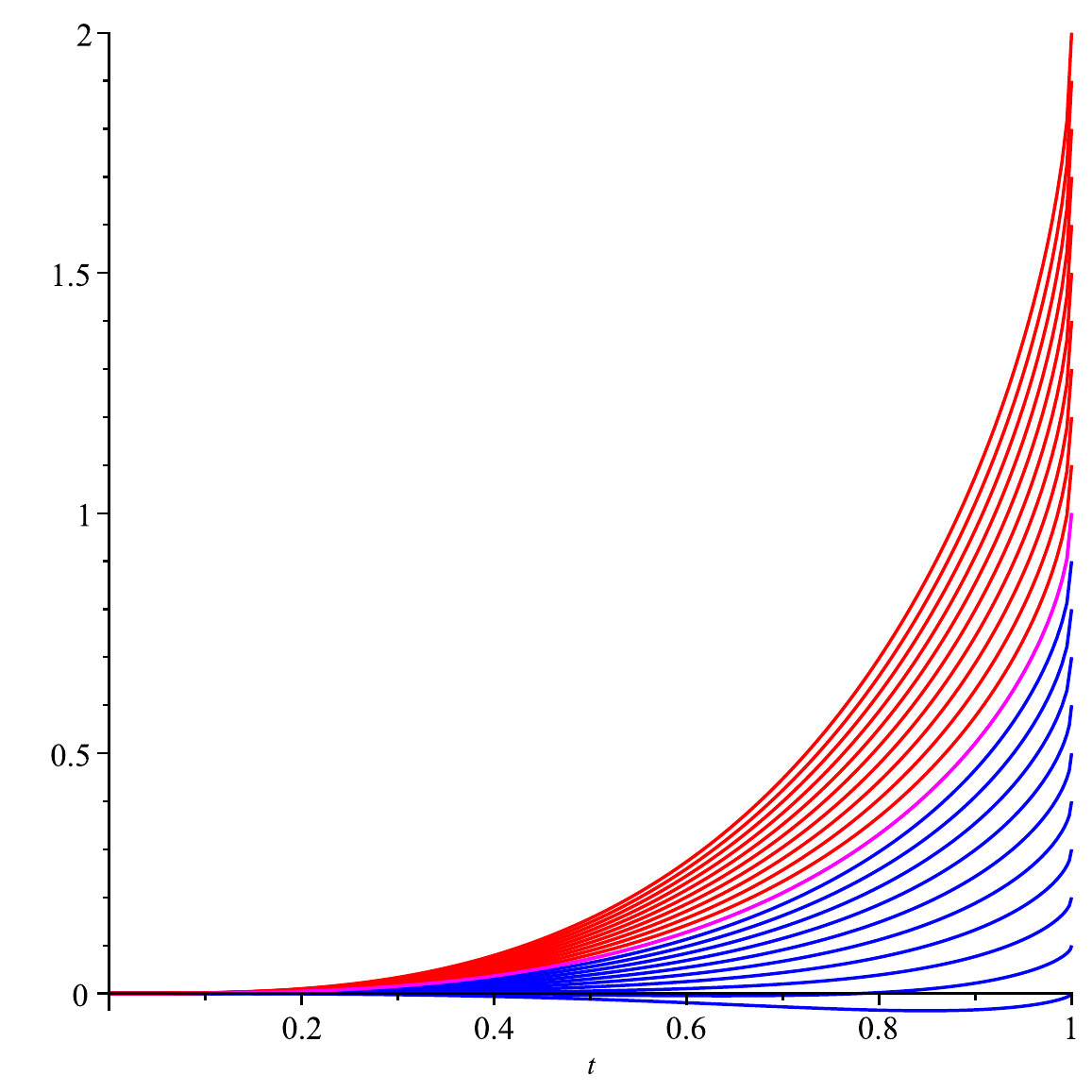} \\
        Figure 3.1.1. $\widetilde{x}_1^*(t)$ with  $\beta=\frac{3}{4},\rho=4$ &
        Figure 3.1.2. $\widetilde{x}_2^*(t)$ with $\beta=\frac{3}{4},\rho=4$
    \end{tabular}
\end{figure}

\begin{figure}[htbp]
    \centering
    \begin{tabular}{cc}
        \includegraphics[width=0.45\textwidth]{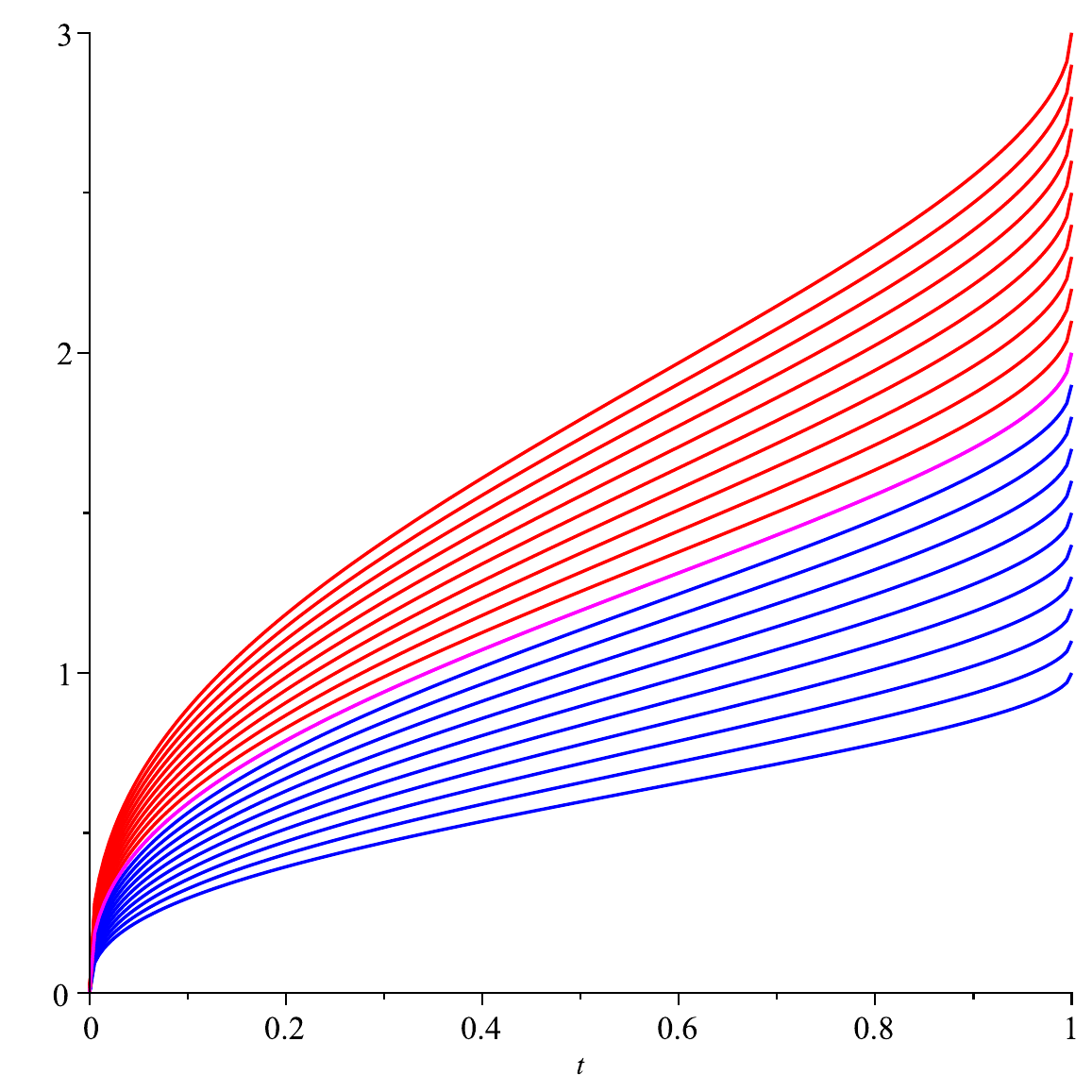} &
        \includegraphics[width=0.45\textwidth]{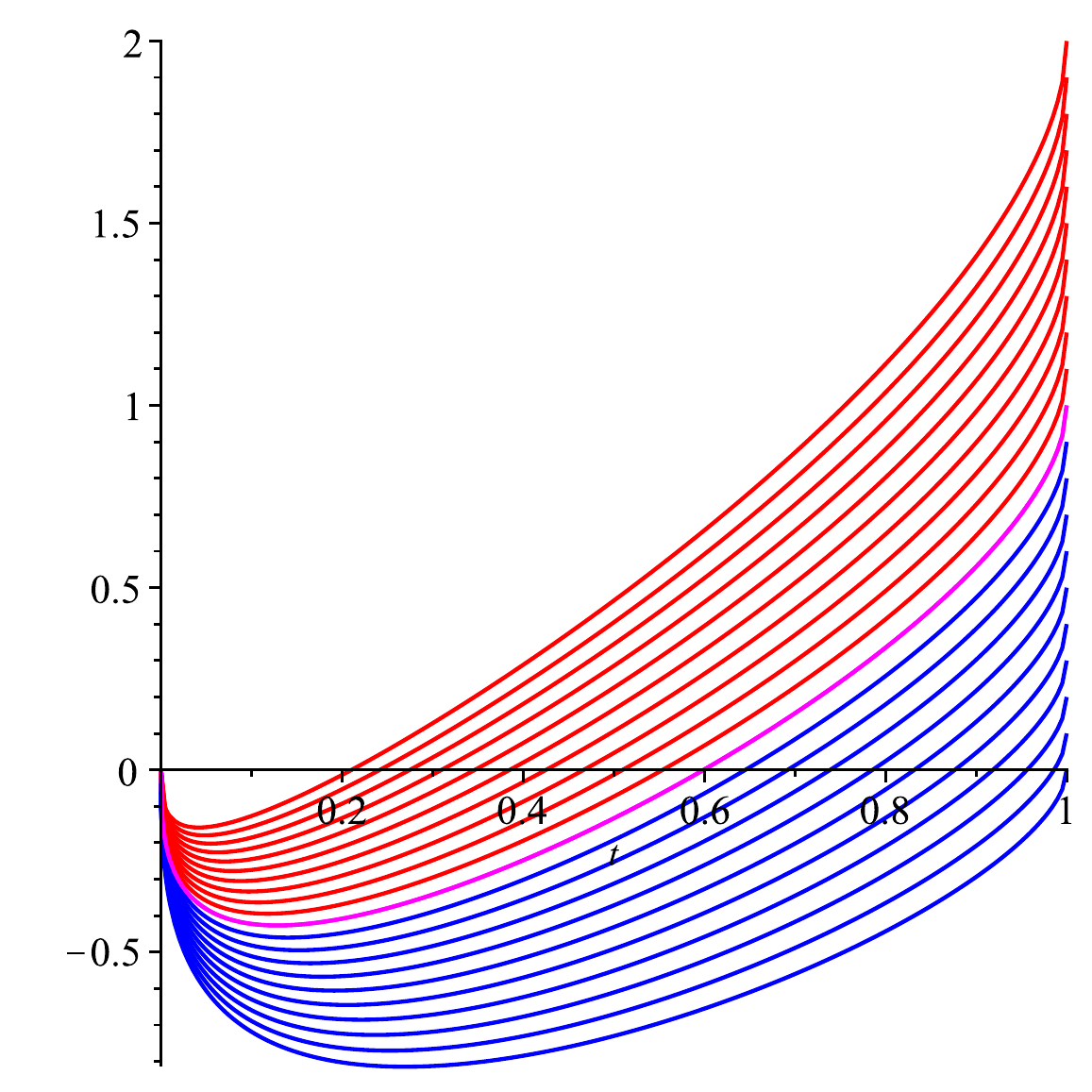} \\
        Figure 3.1.3. $\widetilde{x}_1^*(t)$ with $\beta=\frac{3}{4},\rho=\frac{1}{2}$ &
        Figure 3.1.4. $\widetilde{x}_2^*(t)$ with $\beta=\frac{3}{4},\rho=\frac{1}{2}$
    \end{tabular}
   \end{figure}	
\end{example}	
	
\subsection{Fuzzy fractional isoperimetric problem}
	In this section, let $\tilde{h}: \mathbf{F}_C(\mathbb{R})^{4} \times \mathbb{R}\to \mathbf{F}_C(\mathbb{R})$ be a fuzzy function
	of class $\mathcal{C}^1$ and $\widetilde{H}:\mathcal{C}^1([a,b])^2\to \mathbf{F}_C(\mathbb{R})$ . We consider the fuzzy fractional isoperimetric problem as follows:
	
	\noindent (FIP) $\widetilde{\min}$ $\displaystyle\widetilde{J}(\widetilde{{\bf x}})={\widetilde{\int}}_{a}^{b}
	\widetilde{f}[\widetilde{{\bf x}},{_{gr}^C\mathcal{D}}^{\beta,\rho}_{a^+}\widetilde{{\bf x}}](t)dt$
	
	\indent  s.t. $\displaystyle\widetilde{H}(\widetilde{{\bf x}})={\widetilde{\int}}_{a}^{b}
	\widetilde{h}[\widetilde{{\bf x}},{_{gr}^C\mathcal{D}}^{\beta,\rho}_{a^+}\widetilde{{\bf x}}](t)dt=\widetilde{l}$,
	
	\indent\indent$\widetilde{x}_i(a)=\widetilde{x}_{0,i},\widetilde{x}_i(b)=
	\widetilde{x}_{1,i}, \forall i\in I=\{1,2\}.$

	\begin{definition} A fuzzy function $\widetilde{{\bf x}}^*=\widetilde{{\bf x}}^*(t)$ in $\widetilde{\Theta}$ is a minimal solution of (FIP) if
			$$\Delta\widetilde{J}:=\widetilde{J}(\widetilde{{\bf x}})\ominus_{gr}
			\widetilde{J}(\widetilde{{\bf x}}^*)\ge \widetilde{0},$$
			for all admissible fuzzy functions $\widetilde{{\bf x}}\in \widetilde{\Theta}$.
	\end{definition}

	\noindent The granular problem corresponding to the problem (FIP) is
	
	\noindent (GIP) min $\displaystyle J^{gr}({\bf x}^{gr})=\int_{a}^{b}
	f^{gr}[{\bf x}^{gr},
	{^CD}^{\beta,\rho}_{a^+}{\bf x}^{gr}](t,\mu,\alpha_f)dt$
	
	\noindent  s.t. $\displaystyle H^{gr}({\bf x}^{gr})=\int_{a}^{b}
	h^{gr}[{\bf x}^{gr},{^CD}^{\beta,\rho}_{a^+}{\bf x}^{gr}](t,\mu,\alpha_{h})dt=l^{gr}(\mu,\alpha_{l}),
	\forall \alpha_{h}=\alpha_{l},$
	
	\indent\indent$x_i^{gr}(a,\mu,\alpha_{x_i})=x_{0,i}^{gr}(\mu,\alpha_{x_{0,i}}),
	x_i^{gr}(b,\mu,\alpha_{x_i})=x_{1,i}^{gr}(\mu,\alpha_{x_{1,i}}),$
	
	\indent\indent$\forall \alpha_{x_i}=\alpha_{x_{0,i}}=\alpha_{x_{1,i}},\forall i\in I.$

\begin{definition} Let $\mu,\alpha_{x_i}\in [0,1] (i=1,2)$.
		 A granular  function $\mathcal{H}(\widetilde{{\bf x}}^*(t))={\bf x}^{*gr}$ in the admissible set $\Theta^{gr}$ is a minimal solution of (GIP) if
$\Delta J^{gr}:=J^{gr}({\bf x}^{gr})-
			J^{gr}({\bf x}^{*gr})\ge 0,\forall {\bf x}^{gr}\in\; \Theta^{gr}$.
	\end{definition}

	\begin{proposition}\label{prop3.4}
		Suppose that $\mathcal{H}:\;\widetilde{\Theta}\subset\mathbf{F}_C(\mathbb{R})\to \Theta^{gr}\subset\mathbb{R}$, defined by $\mathcal{H}(\widetilde{{\bf x}}(t))={\bf x}^{gr}$, is bijective map. Then,
		\begin{enumerate}
			\item[(i)] $\mathcal{H}(\widetilde{\Theta})=\;\Theta^{gr}$ and $\mathcal{H}^{-1}( \;\Theta^{gr})=\widetilde{\Theta}$.
			\item[(ii)] If $\widetilde{{\bf x}}^*$ is a minimal solution of (FIP) then $\mathcal{H}(\widetilde{{\bf x}}^*)={\bf x}^{*gr}$ is a minimal solution of (GIP).
			\item[(iii)] If ${\bf x}^{*gr}$ is a minimal solution of (GIP) then $\widetilde{{\bf x}}^*=\mathcal{H}^{-1}({\bf x}^{*gr})$ is a minimal solution of (FIP).
		\end{enumerate}
	\end{proposition}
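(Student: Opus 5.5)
The plan is to mirror Proposition \ref{prop3.1}, since (FIP) and (GIP) differ from (P) and (GP) only by the isoperimetric constraint, and that constraint passes through $\mathcal{H}$ exactly like the objective. Here $\widetilde{\Theta}$ denotes the admissible fuzzy functions of (FIP), and $\Theta^{gr}$ the admissible granular functions of (GIP).

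For (i), take $\widetilde{{\bf x}}\in\widetilde{\Theta}$. The boundary conditions $\widetilde{x}_i(a)=\widetilde{x}_{0,i}$ and $\widetilde{x}_i(b)=\widetilde{x}_{1,i}$ become, by the definition of fuzzy equality through horizontal membership functions, $x_i^{gr}(a,\mu,\alpha_{x_i})=x_{0,i}^{gr}(\mu,\alpha_{x_{0,i}})$ and $x_i^{gr}(b,\mu,\alpha_{x_i})=x_{1,i}^{gr}(\mu,\alpha_{x_{1,i}})$ for equal granules. By Remark \ref{rem2.1}, $\widetilde{x}_i\in\mathcal{C}^1$ if and only if $x_i^{gr}$ is $C^1$ in $t$. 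For the constraint, I would apply $\mathcal{H}$ to $\widetilde{H}(\widetilde{{\bf x}})=\widetilde{l}$. The definition of the gr-integral gives $\mathcal{H}(\widetilde{\int}_a^b\widetilde{h}\,dt)=\int_a^b h^{gr}\,dt$. The remark listing the properties of the granular CK derivative gives $\mathcal{H}({_{gr}^C\mathcal{D}}^{\beta,\rho}_{a^+}\widetilde{x}_i)={^CD}^{\beta,\rho}_{a^+}x_i^{gr}$. Together these yield $H^{gr}({\bf x}^{gr})=l^{gr}(\mu,\alpha_l)$ for $\alpha_h=\alpha_l$. So $\mathcal{H}(\widetilde{\Theta})\subset\Theta^{gr}$. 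Each step is an equivalence, so running the argument backwards for ${\bf x}^{gr}\in\Theta^{gr}$, using bijectivity to set $\widetilde{{\bf x}}=\mathcal{H}^{-1}({\bf x}^{gr})$, gives the reverse inclusion and $\mathcal{H}^{-1}(\Theta^{gr})=\widetilde{\Theta}$.

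For (ii), let $\widetilde{{\bf x}}^*$ be minimal for (FIP) and take any ${\bf x}^{gr}\in\Theta^{gr}$. By (i), $\widetilde{{\bf x}}=\mathcal{H}^{-1}({\bf x}^{gr})\in\widetilde{\Theta}$, so $\widetilde{J}(\widetilde{{\bf x}})\ominus_{gr}\widetilde{J}(\widetilde{{\bf x}}^*)\ge\widetilde{0}$. Applying $\mathcal{H}$ and using $\mathcal{H}(\widetilde{u}\ominus_{gr}\widetilde{v})=\mathcal{H}(\widetilde{u})-\mathcal{H}(\widetilde{v})$ together with the order definition, we get $J^{gr}({\bf x}^{gr})-J^{gr}({\bf x}^{*gr})\ge 0$ for all $\mu$ and $\alpha_f$. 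The gr-integral again converts $\mathcal{H}(\widetilde{J})$ into $J^{gr}$. This computation is the same one carried out at the start of the proof of Proposition \ref{prop3.2}.

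For (iii), I reverse the argument. Given a minimal ${\bf x}^{*gr}$ of (GIP) and any $\widetilde{{\bf x}}\in\widetilde{\Theta}$, (i) gives $\mathcal{H}(\widetilde{{\bf x}})\in\Theta^{gr}$, so $J^{gr}(\mathcal{H}(\widetilde{{\bf x}}))-J^{gr}({\bf x}^{*gr})\ge0$. Rewriting this as $\mathcal{H}(\Delta\widetilde{J})\ge\mathcal{H}(\widetilde{0})$ gives $\Delta\widetilde{J}\ge\widetilde{0}$.

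The main subtlety is the granule bookkeeping: the identity $\alpha_h=\alpha_l$, and the matching of the granule of the constraint with those of the $x_i$, must be preserved under $\mathcal{H}^{-1}$. I would handle this exactly as in Proposition 9 of \cite{TTKN25}, relying on the bijectivity hypothesis rather than on the inverse-formula proposition for $\mu$-level sets.
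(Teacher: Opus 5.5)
Your proposal is correct and takes essentially the paper's approach. The paper gives no separate proof of this proposition; as with Proposition \ref{prop3.1}, it relies on the transfer argument of Proposition 9 in \cite{TTKN25}, which pushes admissibility, the isoperimetric constraint and the increment $\Delta\widetilde{J}$ through $\mathcal{H}$ using the gr-integral, the identity $\mathcal{H}({_{gr}^C\mathcal{D}}^{\beta,\rho}_{a^+}\widetilde{x}_i)={^CD}^{\beta,\rho}_{a^+}x_i^{gr}$ and the granular order, exactly as you do.
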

\begin{proposition}\label{prop3.5} Let $\widetilde{{\bf x}}^*\in \widetilde{\Theta}$ be a minimizer for the problem (FIP). Suppose that there are functions ${\bf w}_1^{gr}\in (C^1[a,b])^2$ with ${\bf w}_1^{gr}(a)={\bf w}_1^{gr}(b)={\bf 0}=(0,0) $ such that
		$\delta H^{gr}({\bf x}^{*gr},{\bf w}_1^{gr})\neq 0$. Then, there exist  fuzzy  numbers  $\lambda=\lambda^{gr}(\mu,\alpha_{\lambda})\in \mathbb{R}$ such that,  ${\bf x}^{*gr}=\mathcal{H}(\widetilde{{\bf x}}^*)$ is a solution of the equations
		$$\partial_iK^{gr}[{\bf x}^{*gr},{^CD}^{\beta,\rho}_{a^+}{\bf x}^{*gr}](t,\mu,\alpha_K)+{D}^{\beta,\rho}_{b^-}\left(
		\partial_{i+2}K^{gr}[{\bf x}^{*gr},{^CD}^{\beta,\rho}_{a^+}{\bf x}^{*gr}](t,\mu,\alpha_K)\right)=0,\forall \mu,\alpha_K, i\in I,$$
		
\noindent where ${ K^{gr}[{\bf x}^{gr},{^CD}^{\beta,\rho}_{a^+}{\bf x}^{gr}](t,\mu,\alpha_K)=
			f^{gr}[{\bf x}^{gr},{^CD}^{\beta,\rho}_{a^+}{\bf x}^{gr}](t,\mu,\alpha_f)+
			\lambda
			h^{gr}[{\bf x}^{gr},{^CD}^{\beta,\rho}_{a^+}{\bf x}^{gr}](t,\mu,\alpha_{h})}$.
\end{proposition}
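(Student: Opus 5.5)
We reduce (FIP) to its granular counterpart and then run the classical two-parameter variation argument for isoperimetric problems, with Lemma \ref{lem2.2} as the finite-dimensional Lagrange multiplier rule. By Proposition \ref{prop3.4} (ii), since $\widetilde{{\bf x}}^*$ is a minimizer of (FIP), ${\bf x}^{*gr}=\mathcal{H}(\widetilde{{\bf x}}^*)$ is a minimizer of (GIP). Throughout we fix $\mu$ and the $\alpha$'s, so that every object is a real function of $t$ and Remark \ref{rem2.1} guarantees $C^1$ regularity of $f^{gr},h^{gr}$ and the admissible functions.

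Next, take an arbitrary ${\bf w}^{gr}\in(C^1[a,b])^2$ with ${\bf w}^{gr}(a)={\bf w}^{gr}(b)={\bf 0}$, and the ${\bf w}_1^{gr}$ from the hypothesis. Consider the two-parameter family ${\bf x}^{*gr}+\epsilon{\bf w}^{gr}+\epsilon_1{\bf w}_1^{gr}$, which satisfies the boundary conditions. Define
$$F(\epsilon,\epsilon_1)=J^{gr}({\bf x}^{*gr}+\epsilon{\bf w}^{gr}+\epsilon_1{\bf w}_1^{gr}),\quad G(\epsilon,\epsilon_1)=H^{gr}({\bf x}^{*gr}+\epsilon{\bf w}^{gr}+\epsilon_1{\bf w}_1^{gr})-l^{gr}(\mu,\alpha_l).$$
The Caputo--Katugampola derivative is linear, so ${^CD}^{\beta,\rho}_{a^+}$ of the family equals ${^CD}^{\beta,\rho}_{a^+}{\bf x}^{*gr}+\epsilon\,{^CD}^{\beta,\rho}_{a^+}{\bf w}^{gr}+\epsilon_1{^CD}^{\beta,\rho}_{a^+}{\bf w}_1^{gr}$. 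Then $F,G$ are $C^1$ near $(0,0)$, and their partial derivatives at $(0,0)$ are the first variations: $\partial_{\epsilon}F=\delta J^{gr}({\bf x}^{*gr},{\bf w}^{gr})$, $\partial_{\epsilon_1}G=\delta H^{gr}({\bf x}^{*gr},{\bf w}_1^{gr})$, and similarly for the other two. These are computed by differentiating under the integral exactly as in (\ref{prop3.1-5}).

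Since $G(0,0)=0$ and $\partial_{\epsilon_1}G(0,0)\neq0$, we have $\nabla G(0,0)\neq{\bf 0}$. Moreover $(0,0)$ is a local minimum of $F$ subject to $G=0$, because every such perturbation is admissible for (GIP). Lemma \ref{lem2.2} then gives $\lambda$ with $\nabla F(0,0)+\lambda\nabla G(0,0)=0$. The $\epsilon_1$-component yields
$$\lambda=-\delta J^{gr}({\bf x}^{*gr},{\bf w}_1^{gr})/\delta H^{gr}({\bf x}^{*gr},{\bf w}_1^{gr}).$$
This formula shows $\lambda$ depends only on ${\bf w}_1^{gr}$ and not on ${\bf w}^{gr}$, and it depends on $(\mu,\alpha)$, which gives $\lambda^{gr}(\mu,\alpha_\lambda)$. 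The $\epsilon$-component reads $\delta J^{gr}({\bf x}^{*gr},{\bf w}^{gr})+\lambda\,\delta H^{gr}({\bf x}^{*gr},{\bf w}^{gr})=0$, that is,
$$\int_a^b\sum_{k=1}^2\left(\partial_kK^{gr}\,w_k^{gr}+\partial_{k+2}K^{gr}\,{^CD}^{\beta,\rho}_{a^+}w_k^{gr}\right)dt=0.$$

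Finally, we apply the fractional integration by parts of Lemma \ref{lem4.5} to the second term. The boundary term $I^{1-\beta,\rho}_{b^-}\partial_{k+2}K^{gr}\cdot w_k^{gr}|_a^b$ vanishes because $w_k^{gr}(a)=w_k^{gr}(b)=0$. This leaves $\int_a^b\sum_k(\partial_kK^{gr}+D^{\beta,\rho}_{b^-}\partial_{k+2}K^{gr})w_k^{gr}dt=0$. Choosing $w_k^{gr}=0$ for $k\neq i$ and invoking Lemma \ref{lem2.1} gives the stated equations for each $i\in I$ and all $\mu,\alpha_K$.

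The main obstacle is justifying the regularity needed in this last step. Lemma \ref{lem4.5} requires $\partial_{k+2}K^{gr}\in C^1$, and Lemma \ref{lem2.1} requires $\partial_kK^{gr}+D^{\beta,\rho}_{b^-}\partial_{k+2}K^{gr}$ to be continuous. We assume these as in the proof of Proposition \ref{prop3.2}, relying on Remark \ref{rem2.1} and the $C^1$ hypotheses on $\tilde f,\tilde h$. A secondary point to state explicitly is that $\lambda$ is independent of ${\bf w}^{gr}$.
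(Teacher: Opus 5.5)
Your proposal is correct and follows essentially the same route as the paper. Both use a two-parameter variation ${\bf x}^{*gr}+\epsilon_1{\bf w}_1^{gr}+\epsilon_2{\bf w}_2^{gr}$ in which the constraint's derivative in the ${\bf w}_1^{gr}$ direction is $\delta H^{gr}({\bf x}^{*gr},{\bf w}_1^{gr})\neq 0$, apply the multiplier rule of Lemma \ref{lem2.2} at $(0,0)$, and finish with Lemma \ref{lem4.5} and Lemma \ref{lem2.1} on the arbitrary direction.

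The differences are minor. The paper also invokes the Implicit Function Theorem to exhibit the constraint curve $\epsilon_1=\epsilon_1(\epsilon_2)$, which Lemma \ref{lem2.2} already covers. You make explicit that $\lambda=-\delta J^{gr}({\bf x}^{*gr},{\bf w}_1^{gr})/\delta H^{gr}({\bf x}^{*gr},{\bf w}_1^{gr})$ does not depend on the arbitrary variation, which the paper leaves implicit.
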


	\begin{proof} Consider a variation of $\widetilde{{\bf x}}^*(t)$ with $2$ parameters $\widetilde{{\bf x}}^*(t)+\epsilon_1\widetilde{{\bf w}}_1(t)+\epsilon_2\widetilde{{\bf w}}_2(t)$
		where $\epsilon_i$ are  small real numbers, $\widetilde{{\bf w}}_j(t)=\mathcal{H}^{-1}({\bf w}_j^{gr})(j=1,2)$ with $\delta H^{gr}({\bf x}^{*gr},{\bf w}_1^{gr})\neq 0$, and $\widetilde{{\bf w}}_2(t)\in \mathcal{C}^1([a,b])$ satisfying $\widetilde{{\bf w}}_2(a)=\widetilde{{\bf w}}_{2}(b)=\widetilde{{\bf 0}}$. The maps $F,C:\mathbb{R}^n\to \mathbb{R}$, for $2$ parameters  $(\epsilon_1,\epsilon_2)$ in a neighborhood of zero, defined by
		$$F(\epsilon_1,\epsilon_2)=
		\mathcal{H}(\widetilde{J}(\widetilde{{\bf x}}^*+\epsilon_1\widetilde{{\bf w}}_1+
		\epsilon_2\widetilde{{\bf w}}_2))$$
		\begin{eqnarray*}
			&=&\mathcal{H}\left({\widetilde{\int}}_{a}^{b}
			\widetilde{f}[\widetilde{{\bf x}}^*+\epsilon_1\widetilde{{\bf w}}_1+
			\epsilon_2\widetilde{{\bf w}}_2,{_{gr}^C\mathcal{D}}^{\beta,\rho}_{a^+}\widetilde{{\bf x}}^*+
			\epsilon_1{_{gr}^C\mathcal{D}}^{\beta,\rho}_{a^+}{\bf w}_1+	\epsilon_2{_{gr}^C\mathcal{D}}^{\beta,\rho}_{a^+}{\bf w}_2](t)dt\right)\\
			&=&\int_{a}^{b}\mathcal{H}\left(\widetilde{f}
			[\widetilde{{\bf x}}^*+\epsilon_1\widetilde{{\bf w}}_1+
			\epsilon_2\widetilde{{\bf w}}_2,{_{gr}^C\mathcal{D}}^{\beta,\rho}_{a^+}\widetilde{{\bf x}}^*+
			\epsilon_1{_{gr}^C\mathcal{D}}^{\beta,\rho}_{a^+}{\bf w}_1+\epsilon_2{_{gr}^C\mathcal{D}}^{\beta,\rho}_{a^+}{\bf w}_2](t)\right)dt\\
			&=&\int_{a}^{b}f^{gr}[{\bf x}^{*gr}+\epsilon_1 {\bf w}_1^{gr}
			+\epsilon_2 {\bf w}_2^{gr},{^CD}^{\beta,\rho}_{a^+}{\bf x}^{*gr}+\epsilon_1 {^CD}^{\beta,\rho}_{a^+}{\bf w}^{*gr}_1
			+\epsilon_2 {^CD}^{\beta,\rho}_{a^+}{\bf w}^{gr}_2](t,\mu,\alpha_f)dt,
		\end{eqnarray*}
		$$C(\epsilon_1,\epsilon_2)
		=\mathcal{H}\left({\widetilde{\int}}_{a}^{b}
		\widetilde{h}[\widetilde{{\bf x}}^*+\epsilon_1\widetilde{{\bf w}}_1+
		\epsilon_2\widetilde{{\bf w}}_2,{_{gr}^C\mathcal{D}}^{\beta,\rho}_{a^+}\widetilde{{\bf x}}^*+
		\epsilon_1{_{gr}^C\mathcal{D}}^{\beta,\rho}_{a^+}{\bf w}_1+		\epsilon_2{_{gr}^C\mathcal{D}}^{\beta,\rho}_{a^+}{\bf w}_2](t)dt\ominus_{gr}\widetilde{l}\right)$$
		$$=\int_{a}^{b}h^{gr}[{\bf x}^{*gr}+\epsilon_1 {\bf w}_1^{gr}
		+\epsilon_2 {\bf w}_n^{gr},{^CD}^{\beta,\rho}_{a^+}{\bf x}^{*gr}+\epsilon_1 {^CD}^{\beta,\rho}_{a^+}{\bf w}^{gr}_1
		+\epsilon_2 {^CD}^{\beta,\rho}_{a^+}{\bf w}^{gr}_2](t,\mu,\alpha_{h_j})dt-l^{gr}(\mu,\alpha_{l}),$$
where
\begin{equation}\label{prop3.23-1}		w_{i,k}^{gr}(a,\mu,\alpha_{w_{i,k}})
=w_{i,k}^{gr}(b,\mu,\alpha_{w_{i,k}})=0, \forall i\in I,\forall k=1,2.
\end{equation}
		The differentiability of $F$ and $C$ at $\bar{\epsilon}_i=0(i=1,2)$ could be proven in the same way as the proof of Proposition \ref{prop3.2}.
		Note that the equation $C(\epsilon_1,\epsilon_2)=0$ is not true in general. Hence, we need to find the region  of $(\epsilon_1,\epsilon_2)$ such that $C(\epsilon_1,\epsilon_2)=0$ on that region.

By  the calculation similar to that of Proposition \ref{prop3.2} and (\ref{prop3.23-1}),
		\begin{eqnarray*}
			\partial_1C({\bf 0})&=&\int_{a}^{b}\sum\limits_{k=1}^2
\left(\partial_kh^{gr}.w_{1,k}^{gr}+			\partial_{k+2}h^{gr}.{^CD}^{\beta,\rho}_{a^+}
w_{1,k}^{gr}\right)dt\\
&=&\int_{a}^{b}\sum\limits_{k=1}^2\left(\partial_kh^{gr}
			+{D}^{\beta,\rho}_{b^-}
			\partial_{k+2}h^{gr}\right)w_{1,k}^{gr}dt\\
			&=&\delta H^{gr}({\bf x}^*,{\bf{w}}_1)\neq 0.
		\end{eqnarray*}
It should be noted that the assumption ${\bf x}^{*gr}$ is not an extremal of $H^{gr}$ in \cite{A17} also derives $\delta H^{gr}({\bf x}^*,{\bf{w}}_1)\neq 0$.
By applying the Implicit Function Theorem for $C(\epsilon_1,\epsilon_2)=0$ with $C({\bf 0})=0$, $\partial_1C({\bf 0})\neq 0$, we deduce that there exists the function $\epsilon_1=\epsilon_1(\epsilon_{2})$ in a neighborhood $U$ of zero such that
		$$C(\epsilon_1(\epsilon_2), \epsilon_{2})=0, \forall \epsilon_{2}\in U.$$

		Combining the above equalities and the fact that ${\bf x}^{*gr}$ is a minimizer of (GIP), we get that
		
		\noindent$F(\epsilon_1,\epsilon_2)-
		F(0,0)=$
		$$\int_{a}^{b}f^{gr}[{\bf x}^{*gr}+\epsilon_1 {\bf w}_1^{gr}
		+\epsilon_2 {\bf w}_n^{gr},{^CD}^{\beta,\rho}_{a^+}{\bf x}^{*gr}+\epsilon_1 {^CD}^{\beta,\rho}_{a^+}{\bf w}^{*gr}_1
		+\epsilon_2 {^CD}^{\beta,\rho}_{a^+}{\bf w}^{gr}_2](t,\mu,\alpha_f)dt$$
		$$-\int_{a}^{b}f^{gr}[{\bf x}^{*gr}
		,{^CD}^{\beta,\rho}_{a^+}{\bf x}^{*gr}](t,\mu,\alpha_f)dt\ge 0,$$
		for the above $\epsilon_2\in U$ and $\epsilon_1=\epsilon_1(\epsilon_2)$, i.e., $C(\epsilon_1,\epsilon_2)=0$.
		This ensures that ${\bf 0}=(0,0)$ is a minimizer of the following constrained nonlinear programming
		
		 $$(CNP):\;\;\min\limits_{\epsilon_1,\epsilon_2} \{ F(\epsilon_1,\epsilon_2), C(\epsilon_1,\epsilon_2)=0\}.$$
		
		\noindent Since $\partial_1C({\bf 0})\neq 0$, we deduce from Lemma \ref{lem2.2} that there exists $\lambda=\lambda^{gr}(\mu,\alpha_{\lambda})$ such that
		\begin{equation}\label{prop3.9-5}
			\partial_iF(0,0)+
			\lambda \partial_iC(0,0)=0, \forall i=1,2.
		\end{equation}
		By the calculations similar to that of $\partial_1C_1({\bf 0})$, one has
		$$\partial_2F(0,0)=\int_{a}^{b}
		\sum\limits_{k=1}^2\left(\partial_kf^{gr}
		+{D}^{\beta,\rho}_{b^-}\partial_{k+2}f^{gr}
		\right)w_{2,k}^{gr}
		dt$$
		$$\partial_2C(0,0)=\int_{a}^{b}		\sum\limits_{k=1}^2\left(\partial_kh^{gr}+{D}^{\beta,\rho}_{b^-}
		\partial_{k+2}h^{gr}
		\right)w_{2,k}^{gr}
		dt.$$
		This together with (\ref{prop3.9-5}) entails that,
		$$\int_{a}^{b}
		\sum\limits_{k=1}^2\left(\partial_k(f^{gr}
		+\lambda^{gr}h^{gr})
		+{D}^{\beta,\rho}_{b^-}\partial_{k+2}(f^{gr}+
		\lambda^{gr}h^{gr})
		\right)w_{2,k}^{gr}
		dt=0.$$
		Combining this with the arbitrary of ${\bf w}_2^{gr}=(w_{2,1}^{gr},w_{2,2}^{gr})$ and Lemma \ref{lem2.1}, we could take $w_{2,2}^{gr}=0$ to obtain
		$$\partial_1(f^{gr}+\lambda^{gr}h^{gr})
		+{D}^{\beta,\rho}_{b^-}\partial_{3}(f^{gr}
		+\lambda^{gr}h^{gr})=0,$$
and $w_{2,1}^{gr}=0$ to obtain
$$\partial_2(f^{gr}+\lambda^{gr}h^{gr})
		+{D}^{\beta,\rho}_{b^-}\partial_{4}(f^{gr}
		+\lambda^{gr}h^{gr})=0.$$
Consequently,	$$\partial_iK^{gr}+{D}^{\beta,\rho}_{b^-}\partial_{i+2}K^{gr}=0,\;
		\mbox{for each}\; i\in I,$$
		which completes the proof.
	\end{proof}

	\begin{proposition}\label{prop3.6} Suppose that $\mathcal{H}:{\widetilde{\Theta}}\subset\mathbf{F}_C(\mathbb{R})\to \Theta^{gr}\subset\mathbb{R}$, defined by $\mathcal{H}(\widetilde{{\bf x}}(t))={\bf x}^{gr}$, is bijective map and ${\bf x}^{*gr}$
		satisfies
		$$\partial_iK^{gr}[{\bf x}^{*gr},{^CD}^{\beta,\rho}_{a^+}{\bf x}^{*gr}](t,\mu,\alpha_f)+{D}^{\beta,\rho}_{b^-}\left(
		\partial_{i+2}K^{gr}[{\bf x}^{*gr},{^CD}^{\beta,\rho}_{a^+}{\bf x}^{*gr}](t,\mu,\alpha_f)\right)=0,$$
		$ \forall \alpha_K,i\in I$, where $K^{gr}=f^{gr}+\lambda^{gr}
		h^{gr}$.

If  $\lambda^{gr}=\lambda^{gr}(\mu,\alpha_{\lambda})\ge 0$ and $f^{gr}[{\bf x}^{gr},{^CD}^{\beta,\rho}_{a^+}{\bf x}^{gr}](t,\mu,\alpha_f)$,
		$h^{gr}[{\bf x}^{gr},{^CD}^{\beta,\rho}_{a^+}{\bf x}^{gr}](t,\mu,\alpha_{h})$ are convex with respect to first $4$-arguments in ${^iX}_{ad}^{gr}$, then ${\bf x}^{*gr}$ is a minimal solution of (GIP) and $\widetilde{{\bf x}}^*(t)=\mathcal{H}^{-1}({\bf x}^{*gr})$ is a minimal solution of (FIP).
	\end{proposition}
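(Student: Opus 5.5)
We mirror the proof of Proposition \ref{prop3.3}, using the Lagrangian $K^{gr}=f^{gr}+\lambda^{gr}h^{gr}$ in place of $f^{gr}$. Fix $\mu,\alpha_f,\alpha_h,\alpha_\lambda$ and take any admissible ${\bf x}^{gr}\in\Theta^{gr}$. Write ${\bf x}^{gr}={\bf x}^{*gr}+{\bf w}^{gr}$. Both functions satisfy the same boundary conditions (with the same $\alpha$'s), so $w_k^{gr}(a,\mu,\alpha_{w_k})=w_k^{gr}(b,\mu,\alpha_{w_k})=0$ for $k\in I$. Both also satisfy the isoperimetric constraint $H^{gr}=l^{gr}(\mu,\alpha_l)$, hence
$$J^{gr}({\bf x}^{gr})-J^{gr}({\bf x}^{*gr})=\int_a^b\left(K^{gr}[{\bf x}^{*gr}+{\bf w}^{gr},\cdot]-K^{gr}[{\bf x}^{*gr},\cdot]\right)dt,$$
because the added term $\lambda^{gr}(H^{gr}({\bf x}^{gr})-H^{gr}({\bf x}^{*gr}))$ vanishes. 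Linearity of ${^CD}^{\beta,\rho}_{a^+}$ gives ${^CD}^{\beta,\rho}_{a^+}({\bf x}^{*gr}+{\bf w}^{gr})={^CD}^{\beta,\rho}_{a^+}{\bf x}^{*gr}+{^CD}^{\beta,\rho}_{a^+}{\bf w}^{gr}$.

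Next we establish convexity of $K^{gr}$ in the first four arguments. We add the convexity inequality for $f^{gr}$ to $\lambda^{gr}$ times the one for $h^{gr}$; the sign condition $\lambda^{gr}\ge 0$ is exactly what preserves the direction of the inequality. This yields
$$K^{gr}[{\bf u}+{\bf p},{\bf v}+{\bf q}]-K^{gr}[{\bf u},{\bf v}]\ge\sum_{k=1}^2\left(\partial_kK^{gr}p_k+\partial_{k+2}K^{gr}q_k\right),$$
where $\partial_jK^{gr}=\partial_jf^{gr}+\lambda^{gr}\partial_jh^{gr}$. Applying it with ${\bf p}={\bf w}^{gr}$ and ${\bf q}={^CD}^{\beta,\rho}_{a^+}{\bf w}^{gr}$ gives
$$J^{gr}({\bf x}^{gr})-J^{gr}({\bf x}^{*gr})\ge\int_a^b\sum_{k=1}^2\left(\partial_kK^{gr}w_k^{gr}+\partial_{k+2}K^{gr}\,{^CD}^{\beta,\rho}_{a^+}w_k^{gr}\right)dt.$$

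We then move the fractional derivative off $w_k^{gr}$ using the integration by parts formula of Lemma \ref{lem4.5} with $u=\partial_{k+2}K^{gr}$ and $v=w_k^{gr}$. The boundary term $I^{1-\beta,\rho}_{b^-}\partial_{k+2}K^{gr}\cdot w_k^{gr}\big|_a^b$ vanishes because $w_k^{gr}$ vanishes at both endpoints. The right-hand side therefore becomes
$$\int_a^b\sum_{k=1}^2\left(\partial_kK^{gr}+D^{\beta,\rho}_{b^-}\partial_{k+2}K^{gr}\right)w_k^{gr}\,dt,$$
which is $0$ by the assumed Euler--Lagrange equations. Hence $\Delta J^{gr}\ge0$ for every admissible ${\bf x}^{gr}$ and all $\mu,\alpha$, so ${\bf x}^{*gr}$ is a minimal solution of (GIP). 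Proposition \ref{prop3.4}(iii), using the bijectivity of $\mathcal{H}$, then shows that $\widetilde{{\bf x}}^*=\mathcal{H}^{-1}({\bf x}^{*gr})$ is a minimal solution of (FIP).

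The main point to handle carefully is the use of the constraint to pass from $f^{gr}$ to $K^{gr}$. This step needs $H^{gr}({\bf x}^{gr})=H^{gr}({\bf x}^{*gr})=l^{gr}$ to hold for matching parameters $\alpha_h=\alpha_l$, and $\lambda^{gr}$ must be fixed for the given $(\mu,\alpha_\lambda)$. The regularity needed for Lemma \ref{lem4.5}, namely $\partial_{k+2}K^{gr}\in C^1$, is taken, as in Proposition \ref{prop3.2}, from the $\mathcal{C}^1$ assumptions and Remark \ref{rem2.1}.
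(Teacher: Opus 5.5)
Your proposal is correct and is essentially the paper's argument. The paper derives convexity of $K^{gr}=f^{gr}+\lambda^{gr}h^{gr}$ from $\lambda^{gr}\ge 0$, applies Proposition~\ref{prop3.3} to $K^{gr}$ as a black box, then cancels the common term $\lambda^{gr}l^{gr}$ using the isoperimetric constraint and concludes with Proposition~\ref{prop3.4}; you use the constraint first and write out the convexity-plus-integration-by-parts argument of Proposition~\ref{prop3.3} explicitly, which is equivalent.
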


	\begin{proof} Since $f^{gr},h^{gr}$ are convex with respect to first $4$-arguments and $\lambda(\mu, \alpha_{\lambda})\ge 0$,
		$K^{gr}$ is convex with respect to first $4$-arguments on ${^iX}_{ad}^{gr}$. By Proposition \ref{prop3.3}, ${\bf x}^{*gr}$ is a minimal solution of (GP) with the function $K^{gr}$ instead of $f^{gr}$. This leads to
		$$\int_{a}^{b}K^{gr}({\bf x}^{*gr}+{\bf w}^{gr})dt- \int_{a}^{b} K^{gr}({\bf x}^{*gr})dt\ge 0,$$
		for any admissible function ${\bf x}^{*gr}+ {\bf w}^{gr}$ with  $w_i^{gr}(a,\mu,\alpha_{w_i})=w_i^{gr}(b,\mu,\alpha_{w_i})=0$ for all $i\in I$,
		or equivalently,
		
		$\displaystyle\int_{a}^{b}
		\left(f^{gr}[{\bf x}^{*gr}+
		{\bf w}^{gr},{^CD}^{\beta,\rho}_{a^+}{\bf x}^{*gr}+ {^CD}^{\beta,\rho}_{a^+}{\bf w}^{gr}](t,\mu,\alpha_f)\right)dt$
		$$+\int_{a}^{b}
		\left(\lambda^{gr}
		h^{gr}[{\bf x}^{*gr}+
		{\bf w}^{gr},{^CD}^{\beta,\rho}_{a^+}{\bf x}^{*gr}+ {^CD}^{\beta,\rho}_{a^+}{\bf w}^{gr}](t,\mu,\alpha_{h})\right)dt$$
		$$\ge \int_{a}^{b}\left(
		f^{gr}[{\bf x}^{*gr},{^CD}^{\beta,\rho}_{a^+}{\bf x}^{*gr}](t,\mu,\alpha_f)
		+\lambda^{gr}h^{gr}[{\bf x}^{*gr},{^CD}^{\beta,\rho}_{a^+}{\bf x}^{*gr}](t,\mu,\alpha_{h})\right)dt.
		$$
		Utilizing the fractional isoperimetric constraints, we conclude that
		$$\int_{a}^{b}
		\left(f^{gr}[{\bf x}^{*gr}+
		{\bf w}^{gr},{^CD}^{\beta,\rho}_{a^+}{\bf x}^{*gr}+ {^CD}^{\beta,\rho}_{a^+}{\bf w}^{gr}](t,\mu,\alpha_f)\right)dt+\lambda^{gr}
		l^{gr}$$
		$$\ge \int_{a}^{b}\left(
		f^{gr}[{\bf x}^{*gr},{^CD}^{\beta,\rho}_{a^+}{\bf x}^{*gr}](t,\mu,\alpha_f)\right)dt+\lambda^{gr}
		l^{gr},$$
		or equivalently,
		$$\int_{a}^{b}
		\left(f^{gr}[{\bf x}^{*gr}+
		{\bf w}^{gr},{^CD}^{\beta,\rho}_{a^+}{\bf x}^{*gr}+ {^CD}^{\beta,\rho}_{a^+}{\bf w}^{gr}](t,\mu,\alpha_f)\right)dt\ge \int_{a}^{b}\left(
		f^{gr}[{\bf x}^{*gr},{^CD}^{\beta,\rho}_{a^+}{\bf x}^{*gr}](t,\mu,\alpha_f)\right)dt.$$
		Hence, ${\bf x}^{*gr}$ is a minimal solution of (GIP) and $\widetilde{{\bf x}}(t)$ is a minimal solution of (FIP) by Proposition \ref{prop3.4}.
	\end{proof}

	\begin{example} Let $\rho\in [\frac{1}{3},2]$. Consider the (FIP) as follows
		
		\noindent (FIP) $\widetilde{\min}$ $\displaystyle\widetilde{J}(\widetilde{x})={\widetilde{\int}}_{0}^{1}
		({_{gr}^C\mathcal{D}}^{\frac{2}{3},\rho}_{0^+}
		\widetilde{x}(t))^2dt$
		
		s.t. $\displaystyle\widetilde{H}(\widetilde{x})={\widetilde{\int}}_{0}^{1}
		\widetilde{x}(t)dt=\widetilde{l}=(-1,0,1)$,
		
		\hskip1cm $\widetilde{x}(0)=(0,0,0),\widetilde{x}(1)=(3,4,5)$.
		
		\noindent The granular problem corresponding to the problem (FIP) is
		
		\noindent (GIP) min $\displaystyle J^{gr}(x^{gr})=\int_{0}^{1}
		({^CD}^{\frac{2}{3},\rho}_{0^+}
		x^{gr}(t,\mu,\alpha_x))^2dt$
		
		\noindent s.t. $\displaystyle H^{gr}(x^{gr})=\int_{0}^{1}
		x^{gr}(t,\mu,\alpha_{x})dt=-1+\mu+(2-2\mu)\alpha_l,\forall \alpha_{x}=\alpha_l,$
		
		\noindent $x^{gr}(0,\mu,\alpha_x)=0,x^{gr}(1,\mu,\alpha_x)=3+\mu+(2-2\mu)\alpha_{u}, \forall \alpha_x=\alpha_{u}$, where  $\widetilde{u}=(3,4,5)$.
		
		\noindent Since $h^{gr}(x^{gr},{^CD}^{\frac{2}{3},\rho}_{0^+}
		x^{gr},t,\mu,\alpha_h)=x^{gr}$, one has
		$$\delta H(x^{gr},w^{gr})=\int_0^1(\partial_1h^{gr}+{D}^{\frac{2}{3},\rho}_{1^-}
		(\partial_2h^{gr}))dt=\int_0^11dt=1\neq 0,\forall x^{gr}.$$
		Applying the granular Euler-Lagrange condition for $K^{gr}=f^{gr}+\lambda^{gr}h^{gr}=({^CD}^{\frac{2}{3},\rho}_{0^+}
		x^{gr})^2+\lambda^{gr}x^{gr}$, we obtain that
		$$0=\partial_1K^{gr}+{D}^{\frac{2}{3},\rho}_{1^-}(
		\partial_2K^{gr})=\lambda^{gr}+{D}^{\frac{2}{3},\rho}_{1^-}(
		2{^CD}^{2/3}_{0^+}
		x^{gr}).$$
		This implies the following equation
		$${D}^{\frac{2}{3},\rho}_{1-}(
		{^CD}^{\frac{2}{3},\rho}_{0^+}
		x^{gr})=-\frac{\lambda^{gr}}{2}.$$
		Since
$${D}^{\frac{2}{3},\rho}_{1-}(
		{^CD}^{\frac{2}{3},\rho}_{0^+}
		x^{gr})=-\frac{\lambda^{gr}}{2}\Leftrightarrow
{D}^{\beta,\rho}_{1^-}\left(-\frac{2}{\lambda^{gr}}.
{^CD}^{\beta,\rho}_{0^+}
				x_2^{gr}(t,\mu,\alpha_{x_2}))\right)=1,$$
we deduce from Lemma \ref{lem4.6} (ii) that
		$$-\frac{2}{\lambda^{gr}}.{^CD}^{\frac{2}{3},\rho}_{0^+}
		x^{*gr}(t,\mu,\alpha_{x^*})
=\frac{1}{\Gamma(1+\frac{2}{3})}\left(\frac{1-t^\rho}
{\rho}\right)^{\frac{2}{3}}
+C\left(\frac{1-t^\rho}{\rho}\right)^{-\frac{1}{3}},$$
$$\Leftrightarrow{^CD}^{\frac{2}{3},\rho}_{0^+}
		x^{*gr}(t,\mu,\alpha_{x^*})
=-\frac{\lambda^{gr}}{2}\left(\frac{3}{2\Gamma(\frac{2}{3})}
\left(\frac{1-t^\rho}{\rho}\right)^{\frac{2}{3}}
+C\left(\frac{1-t^\rho}{\rho}\right)^{-\frac{1}{3}}\right) $$
		By applying Lemma \ref{lem4.6} (iii), we deduce from the above equation, $x^{*gr}(0,\mu,\alpha_{x^*})=0$  that
		$$x^{*gr}(t,\mu,\alpha_{x^*})
=-\frac{\lambda^{gr}}{2}
\left(\frac{1}{\Gamma(\frac{2}{3})}\int_0^t
(t^\rho-\tau^\rho)^{-\frac{1}{3}}.
		\tau^{\rho-1}\left(\frac{3}{2\Gamma(\frac{2}{3})}
\left(\frac{1-\tau^\rho}{\rho}\right)^{\frac{2}{3}}
+C\left(\frac{1-\tau^\rho}{\rho}\right)^{-\frac{1}{3}}\right)d\tau\right)
		$$
		$$=-\frac{3\lambda^{gr}\rho^{-\frac{2}{3}}}
{4(\Gamma(\frac{2}{3}))^2}
\int_0^t(t^\rho-\tau^\rho)^{-\frac{1}{3}}.
		\tau^{\rho-1}(1-\tau^\rho)^{\frac{2}{3}}
d\tau-\frac{C\lambda^{gr}\rho^{\frac{1}{3}}}{2\Gamma(\frac{2}{3})}
\int_0^t(t^\rho-\tau^\rho)^{-\frac{1}{3}}.
		\tau^{\rho-1}(1-\tau^\rho)^{-\frac{1}{3}}d\tau.$$
		It follows from Lemma \ref{lem2.15} (i) that
		$$x^{*gr}(t,\mu,\alpha_{x^*})=
-\frac{3\lambda^{gr}\rho^{-\frac{2}{3}}}
{4(\Gamma(\frac{2}{3}))^2}.
		\frac{t^{\frac{2}{3}\rho}}{\frac{2}{3}\rho}
{_2F_1}\left(1,-\frac{2}{3};\frac{5}{3};t^\rho\right)-
\frac{\lambda^{gr}C_1\rho^{\frac{1}{3}}}
		{2\Gamma(\frac{2}{3})}.
\frac{t^{\frac{2}{3}\rho}}{\frac{2}{3}\rho}
		{_2F_1}\left(1,\frac{1}{3};\frac{5}{3};t^\rho\right)$$
$$=
-\frac{9\lambda^{gr}\rho^{-\frac{5}{3}}}
{8(\Gamma(\frac{2}{3}))^2}.t^{\frac{2}{3}\rho}
{_2F_1}\left(1,-\frac{2}{3};\frac{5}{3};t^\rho\right)-
\frac{3\lambda^{gr}C_1\rho^{-\frac{2}{3}}}
		{4\Gamma(\frac{2}{3})}.
t^{\frac{2}{3}\rho}
		{_2F_1}\left(1,\frac{1}{3};\frac{5}{3};t^\rho\right).$$
		By using the endpoint conditions $x^{*gr}(1,\alpha_{x^*})=3+\mu+(2-2\mu)\alpha_{u}$ with $\alpha_{x^*}=\alpha_{u}$ and Lemma \ref{lem2.15} (ii), one has
		\begin{equation}\label{exa3.26-1}
			3+\mu+(2-2\mu)\alpha_{u}=-
\frac{9\lambda^{gr}\rho^{-\frac{5}{3}}}
{8(\Gamma(\frac{2}{3}))^2}.
\frac{\frac{2}{3}}{2.\frac{2}{3}}
			-\frac{3\lambda^{gr}C_1\rho^{-\frac{2}{3}}}
		{4\Gamma(\frac{2}{3})}.
\frac{\frac{2}{3}}{2.\frac{2}{3}-1}.
		\end{equation}
		Hence,		
$$C_1=-\frac{2\rho^{\frac{2}{3}}}{3\lambda^{gr}}
\left((3+\mu+(2-2\mu)
\alpha_{u})\Gamma(\frac{2}{3})
		+\frac{9\lambda^{gr}\rho^{-\frac{5}{3}}}
{16\Gamma(\frac{2}{3})}\right).$$
		Substituting into $x^{*gr}(t,\mu,\alpha_{x^*})$, one has
		$$x^{*gr}(t,\mu,\alpha_{x^*})
=-\frac{9\lambda^{gr}\rho^{-\frac{5}{3}}}
{8(\Gamma(\frac{2}{3}))^2}.p_1(t)
+\frac{1}{2}\left(3+\mu+(2-2\mu)\alpha_{x^*}
		+\frac{9\lambda^{gr}\rho^{-\frac{5}{3}}}
{16(\Gamma(\frac{2}{3}))^2}\right)
.p_2(t),$$
where $p_1(t)=t^{\frac{2}{3}\rho}
{_2F_1}\left(1,-\frac{2}{3};\frac{5}{3};t^\rho\right)$, $p_2(t)=t^{\frac{2}{3}\rho}.
		{_2F_1}\left(1,\frac{1}{3};\frac{5}{3};t^\rho\right)$.
		
We deduce from Lemma \ref{lem2.15} (iii) that
				$$\int_{0}^{1}t^{\frac{2}{3}\rho}{_2F_1}
\left(1,-\frac{2}{3};
\frac{5}{3};t^\rho\right)dt=
		\frac{1}{\frac{2}{3}\rho+1}.{_3F_2}
\left(1,-\frac{2}{3},\frac{2}{3}+\frac{1}{\rho};
\frac{5}{3},\frac{5}{3}+\frac{1}{\rho};1\right)=
		\frac{1}{\frac{2}{3}\rho+1}.F_1.$$
$$\int_{0}^{1}t^{\frac{2}{3}\rho}{_2F_1}\left(1,\frac{1}{3};
\frac{5}{3};t^\rho\right)dt
		=\frac{1}{\frac{2}{3}\rho+1}.{_3F_2}
\left(1,\frac{1}{3},\frac{2}{3}+\frac{1}{\rho};
\frac{5}{3},\frac{5}{3}+\frac{1}{\rho};1\right)=
\frac{1}{\frac{2}{3}\rho+1}.F_2$$
These results could be checked by using Octave or Matlab a specific value of $\rho$, for example $\rho=2$, or $\rho=\frac{1}{2}$. For instance, using
		
\noindent{$>>$pkg load gsl}
		
\noindent{$>>$integral(@(t) t.\textasciicircum(4/3).*gsl$\_$sf$\_$
			hyperg$\_$2F1(1,-2/3,5/3,t.\textasciicircum 2),0,1)}

\noindent{$>>$pkg load symbolic}

\noindent{$>>$(1/(4/3+1))$*$double(hypergeom(sym($[$1, -2/3,7/6$]$), sym($[$5/3,13/6$]$), sym(1)))}
		
\noindent in Octave, we get the results {\bf 0.3241}, i.e.,
$$\int_{0}^{1}t^{\frac{4}{3}}{_2F_1}
\left(1,-\frac{2}{3};
\frac{5}{3};t^2\right)dt=
		\frac{1}{\frac{4}{3}+1}.{_3F_2}
\left(1,-\frac{2}{3},\frac{7}{6};
\frac{5}{3},\frac{13}{6};1\right)\approx {\bf 0.3241}.$$

		Hence, it follows from these results the constraint and $\alpha_{x^*}=\alpha_l$ that
$$	-1+\mu+(2-2\mu)\alpha_{x^*}=\int_{0}^{1}
			x^{*gr}(t,\mu,\alpha_{x^*})dt$$
$$=-\frac{9\lambda^{gr}\rho^{-\frac{5}{3}}}
{8(\Gamma(\frac{2}{3}))^2}.\frac{1}{\frac{2}{3}\rho+1}.F_1
+\frac{1}{2}\left(3+\mu+(2-2\mu)\alpha_{x^*}
		+\frac{9\lambda^{gr}\rho^{-\frac{5}{3}}}
{16(\Gamma(\frac{2}{3}))^2}\right).\frac{1}{\frac{2}{3}\rho+1}.F_2,
$$
		which in turns derives that
		$$\lambda^{gr}(\mu,\alpha_{\lambda})=\frac{32(\Gamma(\frac{2}{3}))^2
\rho^{5/3}}{9\left(F_2-4.F_1\right)}\left((-1+\mu+(2-2\mu)\alpha_{\lambda})
(\frac{2}{3}\rho+1)-\frac{1}{2}
(3+\mu+(2-2\mu)\alpha_{\lambda})F_2\right).$$
		Setting $L:=(-1+\mu+(2-2\mu)\alpha_{\lambda})
(\frac{2}{3}\rho+1)-\frac{1}{2}
(3+\mu+(2-2\mu)\alpha_{\lambda})F_2$ and substituting $\lambda^{gr}$ into $x^{*gr}(t,\mu,\alpha_{\lambda})$, one has
		$$x^{*gr}(t,\alpha_{x^*})=-\frac{9\rho^{-\frac{5}{3}}}
{8(\Gamma(\frac{2}{3}))^2}.\frac{32(\Gamma(\frac{2}{3}))^2
\rho^{5/3}}{9\left(F_2-4.F_1\right)}L.p_1(t)
$$
$$+\frac{1}{2}\left(3+\mu+(2-2\mu)\alpha_{x^*}
		+\frac{9\rho^{-\frac{5}{3}}}
{16(\Gamma(\frac{2}{3}))^2}.\frac{32(\Gamma(\frac{2}{3}))^2
\rho^{5/3}}{9\left(F_2-4.F_1\right)}L\right)
.p_2(t).$$
$$=-\frac{4L}{F_2-4.F_1}.p_1(t)
+\frac{1}{2}\left(3+\mu+(2-2\mu)\alpha_{x^*}
		+\frac{2L}
{F_2-4.F_1}\right)
.p_2(t).$$
		
		Moreover, we can check that $f^{gr},h^{gr}$ convex with respect to first 2-arguments in $\Theta^{gr}$.

Now, we will prove that $\lambda^{gr}(\mu,\alpha_{\lambda})\ge 0$ for all $0<\rho\le 2$. Setting $a_3:=-1+\mu+(2-2\mu)\alpha_{\lambda}$, we get that $a_3\in [-1,1]$ and
$$L=a_3\left(\frac{2}{3}\rho+1\right)-\frac{1}{2}(4+a_3).F_2,$$
$$\lambda^{gr}(\mu,\alpha_{\lambda})=\frac{32(\Gamma(\frac{2}{3}))^2
\rho^{5/3}}{9\left(F_2-4.F_1\right)}\left(a_3\left(\frac{2}{3}\rho+1\right)
-\frac{1}{2}(4+a_3).F_2\right).$$
By using the graph of $\lambda^{gr}(\mu,\alpha_{\lambda})$ on $\rho\in [\frac{1}{3},2]$ and $D=\{(\rho,a_3)\mid \frac{1}{3}\le \rho \le 2, -1\le a_3\le 1\}$, we could check that $\lambda^{gr}(\mu,\alpha_{\lambda})\ge 0$.

Invoking Proposition \ref{prop3.6}, we obtain that $x^{*gr}(t,\mu,\alpha_{x^*})$ is  a minimum of (GIP) and
		$\widetilde{x}^*(t)=\mathcal{H}^{-1}(x^{*gr}(t,\mu,\alpha_{x^*}))$ with the $\mu$-level set
		$$[\widetilde{x}^*(t)]^{\mu}=\left[-\frac{4L_1}{F_2-4.F_1}.p_1(t)
+\frac{1}{2}\left(3+\mu
		+\frac{2L_1}
{F_2-4.F_1}\right)
.p_2(t),\right.
		$$
		$$\left.-\frac{4L_2}{F_2-4.F_1}.p_1(t)+
\frac{1}{2}\left(5-\mu
		+\frac{2L_2}
{F_2-4.F_1}\right)
.p_2(t)\right],$$
where $L_1=(-1+\mu)
(\frac{2}{3}\rho+1)-\frac{1}{2}
(3+\mu)F_2$ and $L_2=(1-\mu)
(\frac{2}{3}\rho+1)-\frac{1}{2}
(5-\mu)F_2$, is a minimum of (FIP), see Figure 3.2.1 and 3.2.2.

\begin{figure}[htbp]
    \centering
    \begin{tabular}{cc}

        \includegraphics[width=0.45\textwidth]{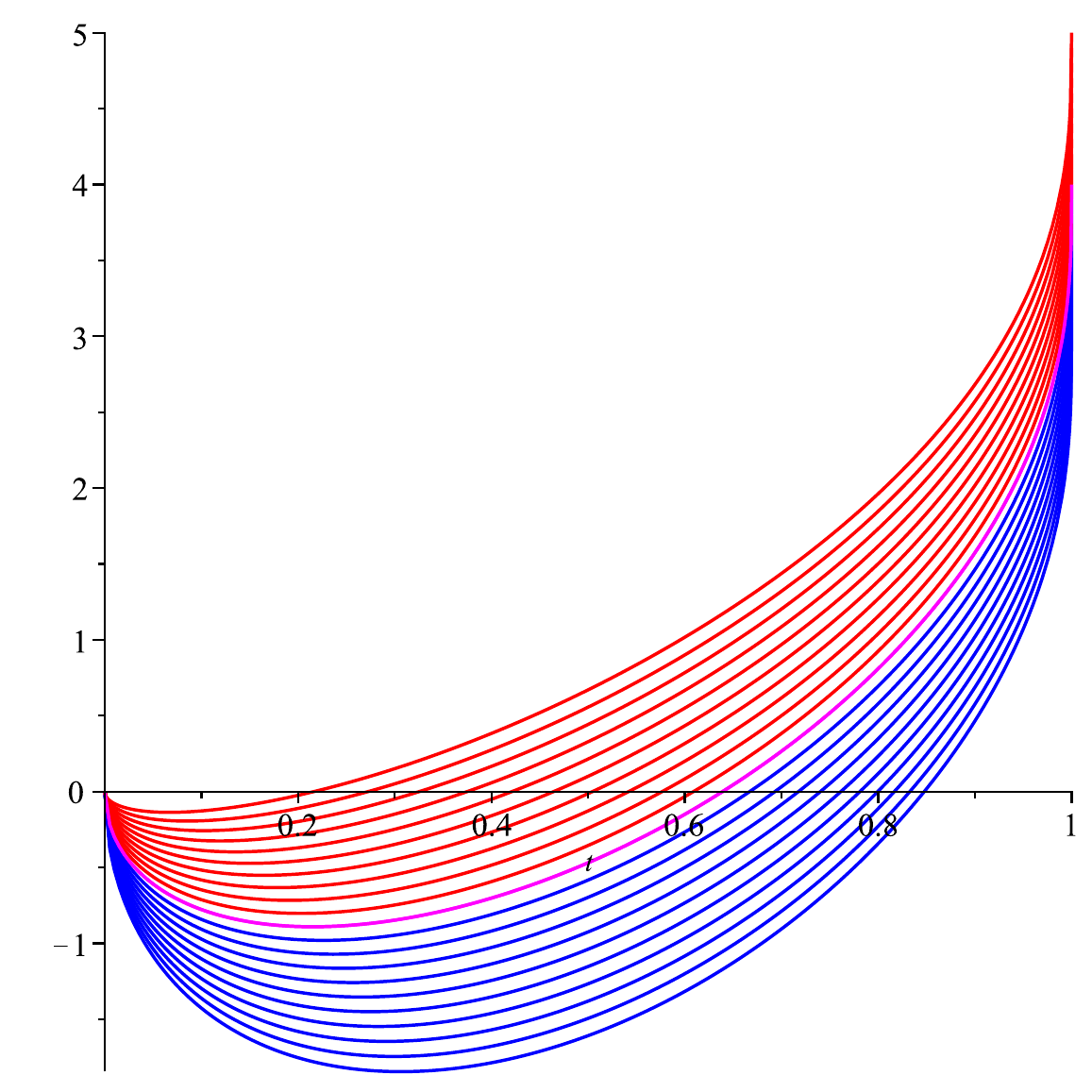} &
        \includegraphics[width=0.45\textwidth]{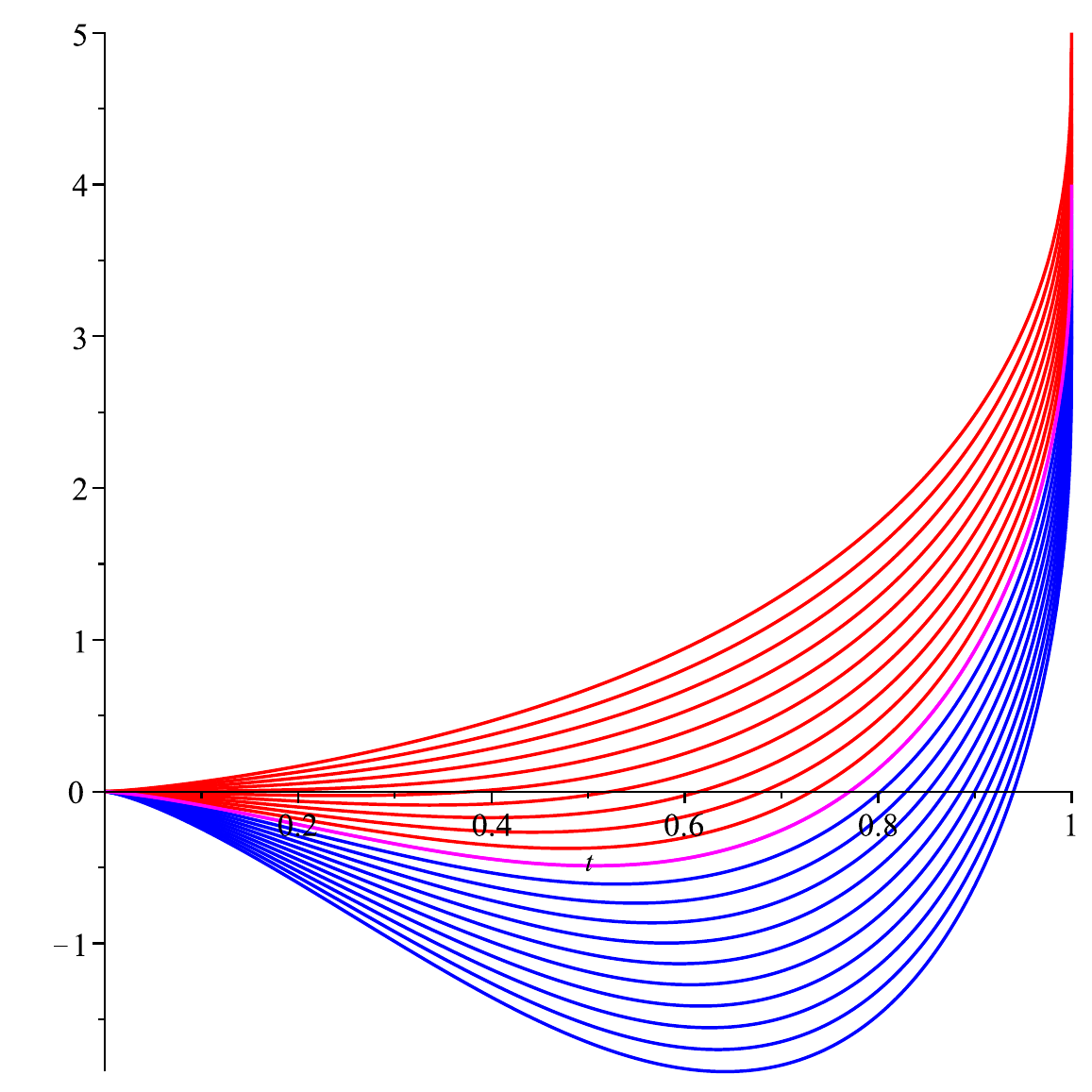} \\

        Figure 3.2.1. $\rho=\frac{3}{4}$ &
        Figure 3.2.2. $\rho=2$ \\
    \end{tabular}
\end{figure}
\end{example}
\section{Conclusions}
By utilizing the granular approach, the granular CK fractional derivatives of a fuzzy function could exist in Example 3.2, while the $gH$-CK fractional  derivatives \cite{HVD19} do not exist. Hence, the proposed results demonstrate clear advantages over previous ones in specific cases. When $\rho=1$, the results and the examples in Section 4 become the results and examples in \cite{TTKN25}.	When the considered functions are crisp, our results reduce to the results in  \cite{A17}. However, the examples in this paper are new, even in the crisp cases. In future work, the granular Caputo-Katugampola derivatives of the fuzzy functions could be defined similarly for other classes of fuzzy functions such as the type-2 fuzzy functions, the intuitionistic fuzzy functions or the neutrosophic fuzzy functions. Moreover, the results of the applications of the granular Caputo derivatives in the fuzzy fractional differential equations, the other fuzzy variational problems, the fuzzy optimal control problems in \cite{NZ17} and related papers  could be generalized for the granular Caputo-Katugampola derivatives, which  are also the interesting subjects for the next research.

\noindent{\bf Conflict of interest}
	
The authors declare that they have no conflict of interest.
	
	
	
\section*{Acknowledgement}
	This work was supported by National Foundation for Science and Technology Development (NAFOSTED) of Vietnam under grant number 101.01-2025.16.

	

\end{document}